\documentclass[12pt]{amsart}\usepackage{mathtools,amssymb,latexsym,graphics,enumerate}
\usepackage[mathscr]{eucal}
\usepackage{amsmath,amsfonts,amsthm,amssymb,bbm,mathtools}
\usepackage{color}
\usepackage{cite}
\usepackage{hyperref}
\usepackage{comment}
\usepackage[left=1in,right=1in,top=1in,bottom=1in]{geometry}

\usepackage{graphicx}
\usepackage{subcaption}
\usepackage{float}
\usepackage[dvipsnames]{xcolor}

\numberwithin{equation}{section}

\newcommand{\rr}{\mathbb{R}}

\newcommand{\lan}{\langle}
\newcommand{\ran}{\rangle}
\newcommand{\be}{\begin{eqnarray*}}
\newcommand{\bel}{\begin{eqnarray}}
\newcommand{\ee}{\end{eqnarray*}}
\newcommand{\eel}{\end{eqnarray}}
\newcommand{\ba}{\begin{aligned}}
\newcommand{\ea}{\end{aligned}}
\newcommand{\de}{\Delta}
\newcommand{\al}{\alpha}
\newcommand{\na}{\nabla}
\newcommand{\ep}{\epsilon}
\newcommand{\nq}{{\neq}}

\newcommand{\myr}[1]{{#1}}

\newcommand{\bp}{{\mathbf{p}}}
\newcommand{\bx}{{\mathbf{x}}}
\newcommand{\by}{{\mathbf{y}}}
\newcommand{\bk}{{\mathbf{k}}}
\newcommand{\bv}{{\mathbf{v}}}

\newcommand{\fz}{\langle f\rangle}
\newcommand{\mf}{\mathfrak}

\newcommand{\myb}[1]{}
\newcommand{\myh}[1]{}

\newcommand{\pa}{\partial}
\newcommand{\wh}{\widehat}
\newcommand{\wt}{\widetilde}
\newcommand{\lf}{\left}
\newcommand{\rg}{\right}
\newcommand{\te}{\theta}

\newtheorem{theorem}{Theorem}
\newtheorem{cor}{Corollary}

\newtheorem{lem}{Lemma}

\newtheorem{remark}{Remark}

\numberwithin{remark}{section}
\numberwithin{cor}{section}
\numberwithin{lem}{section}
\numberwithin{theorem}{section}

\newcommand{\Torus}{{\mathbb T}}

\newcommand{\dss}{\displaystyle}

\newcommand{\n}{\ensuremath{\nonumber}}
\newcommand{\sima}[1]{{#1}}

\mathtoolsset{showonlyrefs}
\allowdisplaybreaks

\title{Mixing, Enhanced Dissipation and Phase Transition in the Kinetic Vicsek Model} 

\author[Mengyang Gu]{Mengyang Gu} 
\address{Department of Statistics and Applied Probability, University of California,  Santa Barbara,  CA 93106, USA}
\email{mengyang@pstat.ucsb.edu}
\author[Siming He]{Siming He}
\address{Department of Mathematics, University of South Carolina, Columbia, SC 29208, USA}
\email{siming@mailbox.sc.edu} 
\thanks{{\bf Acknowledgment:} M.G. acknowledges partial support from the National Science Foundation (NSF) under award number OAC-2411043. S.H. is partly supported by NSF grants DMS 2006660, 2304392, 2406293, and 2006372. 
We thank the anonymous referees for their comments and valuable suggestions.}
\begin{document}
\begin{abstract}
In this paper, we study the kinetic Vicsek model, which serves as a starting point to describe the polarization phenomena observed in the 
experiments of fibroblasts moving on liquid crystalline substrates, 
detailed in \cite{luo2022cell}. The long-time behavior of the kinetic equation is analyzed, revealing that, within specific parameter regimes, the mixing and enhanced dissipation phenomena stabilize the dynamics and ensure effective information communication among agents. Consequently, the solution exhibits features similar to those of a spatially-homogeneous system studied in \cite{FrouvelleLiu12}. As a result, we confirm the phase transition observed in Vicsek et al. \cite{Vicsek95} on the kinetic level.
\end{abstract} 
\maketitle
\section{Introduction}
Experiments suggest that 
common cells in muscle and connecting tissues, such as myoblasts and fibroblasts, exhibit an orientation alignment phenomenon induced by the weak influence of a molecularly aligned substrate (see, e.g., \cite{martella2019liquid,turiv2020topology,luo2022cell,GuFangLuo23,chen2024disodium}). As a result of this alignment process, the muscle fibers developed later on are ordered. 
However, a mathematically rigorous justification for this emergence of order remains open. Understanding the underlying mechanism of cell alignment is crucial for designing biomaterials that present functional properties of human organs in tissue engineering. The goal of the paper is to provide a theoretical framework to analyze this family of pattern formation processes in human tissue development.

The starting point of our discussion is the agent-based stochastic differential equation (SDE)-dynamics
\begin{align}\label{micro_eq}\begin{cases}
d{\bx}^i =v\bp(\theta^i)d t:=v(\cos(\theta^i),\sin(\theta^i))d t,   \\
\displaystyle d\theta^i=\frac{\kappa}{N}\sum_{j=1}^{N} \Phi(\bx^j-\bx^i)\Psi(\theta^j-\theta^i)dt+{\sqrt{2\nu}dB^i_t},\\
(\bx^i ,\theta^i)\big|_{t=0}=(\bx^i_0,\theta_0^i),\quad \bx^i\in {{\mathbb T}}^2,\, \,\theta^i\in {\mathbb T},\quad i\in \{1,2,..., N\}. 
\end{cases}
\end{align} 
Here, $\bx^i(t)$ and $\theta^i(t)$ denote the position and velocity direction of the $i$th agent (e.g. a fibroblast), respectively, while $N$ represents the total number of agents. To simplify the model, we assume that $N$ stays constant over time. We assume that all the agents move with the \sima{given} speed $v=v(t)>0$. In the experiment, due to the proliferation effect (which has not yet been incorporated into our model), the moving agents gradually slow down as time progresses, resulting in $v(t)$ being a decreasing function over time. Agents consistently adjust their velocity to match the dynamic average velocity of their neighbors, following specific `communication protocols'. The influence functions $(\Phi, \Psi)$ encode the spatial and angular dependencies of this averaging process. The parameter $\kappa$ signifies the strength of the alignment forces. Additionally, to model the randomness inherent in the experiment, we introduce the independent and identically distributed (i.i.d.) Brownian noise $\sqrt{2\nu}dB_t^i$, where $\nu$ represents the noise strength. Based on the experimental observation \cite{luo2022cell}, proliferation effect plays a crucial role in promoting alignment among the agents. Hence, it is reasonable to consider SDE models with an increasing population size $N$. This will be discussed in future work.

On a mesoscopic level, the following model characterizes the large population limit of the Vicsek dynamics \eqref{micro_eq}
\begin{subequations}\label{eq:bsc}
\begin{align}\label{eq:bsc_1}\begin{cases}
\pa_t f+v \bp\cdot \na_\bx f+\kappa \pa_\theta(fL[f])=\nu \pa_\theta^2 f,\quad \bp(\theta):=(\cos(\te),\sin(\te)),\\
f(t=0,\bx ,\theta)=f_{0}(\bx,\theta),\quad \bx\in {\mathbb T}^2,\quad \theta\in{\mathbb T}.\end{cases} 
\end{align}
The density function $f(t,\bx,\te)\geq 0$ captures the population distribution of the cells moving with  velocity $v\bp$ at position $\bx.$ 
Here $\bp(\theta)=(\cos(\theta), \sin(\theta))$  represents the orientation of the particle, and $v=v(t)$ indicates the speed. 
The alignment effect is encoded in the operator $L$, defined as
\begin{align}\label{Lf}
L[f](t,\bx,\te)=\int_{{\mathbb T}}\int_{{\mathbb T}^2} \Phi(\by-\bx)\Psi(\eta-\te) f(t,\by,\eta)d\by d\eta.
\end{align}
\end{subequations}
Here, the integral kernel is a product of the \emph{spatial influence function} $\Phi$ and the \emph{angular influence function} $\Psi$. We sketch the derivation of the equation \eqref{eq:bsc} from the agent-based dynamics in Appendix \ref{App:Derivation}. The argument is from the classical literature \cite{BolleyCanizoCarrillo12}. A few features during the alignment process of fibroblasts on liquid crystalline substrates were found in \cite{GuFangLuo23}. First, the agents only interact with their direct neighbors in a small spatial region. Second, fibroblasts traveling in opposite directions can glide past each other without influencing each other’s velocity, and similar behaviors are also found in behaviors of epithelial cells during morphogenesis \cite{guillot2013mechanics,bi2014energy}. 
Consequently, the spatial influence function $\Phi$ is local, and the angular influence function $\Psi$ is heterogeneous. Finally, we observe that the divergence structure of the equation \eqref{eq:bsc} guarantees conservation of the total mass of $f$ (as long as the solution is regular enough),
\begin{align}\label{Mass}
\|f(t)\|_{L_{\bx,\theta}^1}\equiv\|f_0\|_{L_{\bx,\theta}^1}.
\end{align}
\sima{By normalizing the density $\wt {f}:=f/\|f_0\|_{L_{\bx,\te}^1}$ and redefining the alignment parameter $\wt\kappa:=\kappa\|f_0\|_{L_{\bx,\te}^1}$, one can reduce the system \eqref{eq:bsc_1} to the special case where $\|\wt f\|_{L_{\bx,\theta}^1}\equiv 1$. Hence, without loss of generality, we assume that  $\|f_0\|_{L_{\bx,\theta}^1}=1$ throughout the paper.  } 

The agent-based system \eqref{micro_eq} has been extensively studied since Vicsek's pioneering work \cite{Vicsek95}. Numerically, it has been observed that the model exhibits a phase transition phenomenon. When the strength of the noise is significant, 
randomness dominates the agents' collective behavior. However, the alignment phenomenon emerges at small noise levels. To understand this critical threshold for phase transition, physicists \cite{TonerTu95, TonerTu98, Ihle11} and mathematicians \cite{DegondMotsch07,BolleyCanizoCarrillo12} have derived various kinetic and hydrodynamic limits for the Vicsek model. \sima{In the \emph{spatially homogeneous} case where the solution $f$ to \eqref{eq:bsc_1} is independent of the spatial $\bx$-variable, i.e., $f(t,\bx,\theta)=f(t,\te)$,  the phase transition phenomenon is justified in the works \cite{FrouvelleLiu12,DegondFrouvelleLiu15}. However, in the \emph{spatially inhomogeneous case} where the $\bx$-variable is involved, much less is known. To the best of our knowledge, there is no rigorous mathematical derivation for the phase transition of the full kinetic Vicsek model \eqref{eq:bsc_1}. Our main goal in this paper is to develop a mathematical framework to derive the phase separation in the \emph{spatially inhomogeneous scenario}.}

To gain further insight into the kinetic Vicsek model \eqref{eq:bsc}, \sima{we introduce a related mathematical model, i.e., the Cucker-Smale (CS) flocking model \cite{CS07}}. The CS model has been extensively studied, and the literature centered around it is vast. We refer the readers to the work \cite{MotschTadmor11, CCTT16, TanTadmor, HaLiu09, HaTadmor08, HeTadmor17, ShvydkoyTadmorI, ShvydkoyTadmorII, ShvydkoyTadmorIII, Shvydkoy18, Shvydkoy21, DietertShvydkoy19, DietertShvydkoy21, DKRT17} for further discussions. In this model, the speed of the agents is no longer restricted, and no noise is present. The primary challenges in analyzing the CS model involve understanding the long-time dynamics of the model under a \emph{local communication protocol}, where agents only interact with close neighbors. Even within a bounded domain, the agents might fail to align due to the emergence of isolated ``communities'' that do not have efficient information interchange. This isolation typically occurs when vacuum regions form among the agents, and hence, the minimum of the density function approaches zero. It is observed in \cite{ShvydkoyTadmor20Top} that the critical ingredient guaranteeing unconditional flocking for systems with a local communication protocol is the slow decay of the density minimum.  However, this slow decay is challenging to justify mathematically in general. With an appropriately designed \emph{topological communication protocol}, the authors are able to justify the slow decay of density minimum and, hence, the unconditional flocking phenomenon. Incorporating noise is another way to regularize the long-time dynamics of the CS dynamics on the kinetic level, \cite{Shvydkoy22hypo}. 

Based on the discussion above, we specify the basic assumptions on the spatial/angular influence function pair $(\Phi,\Psi)$. The spatial influence function $\Phi\in C^\infty({\mathbb T}^2)$ is even with respect to the argument:
\begin{align}
\Phi(\bx-\by)=\Phi(\by-\bx),\quad \forall\bx,\by\in {\mathbb T}^2. \label{phi_sym}
\end{align} 
We emphasize that no additional structural assumptions are imposed on $\Phi$. Hence, the function $\Phi$ can have a small support, corresponding to the phenomenon that the agents only interact with their close neighbors. 
On the other hand, we consider angular influence functions $\Psi\in C^\infty({\mathbb T})$ satisfying the following structural assumptions:
\begin{align} \Psi(\theta )=\sin(\theta )\psi(\theta ),\quad0\leq \psi  \in C^\infty ({\mathbb T}),\quad \int_{-\pi}^\pi\Psi(\theta)d\theta=0.\label{Psi_strc}
\end{align}
Here, $\psi$ is a positive, smooth, even function with respect to the argument $\theta$. We note that if $\theta=\pi$ (i.e., when two agents are moving in opposite directions), the interaction between them is zero. 

If the solution $f$ of \eqref{eq:bsc} does not depend on the spatial variable $\bx$, then it is a solution to the following spatially homogeneous equation \sima{(see, e.g., \cite{FrouvelleLiu12})}:
\begin{align}\label{effctv_dym}
\pa_t  g-\kappa\lf(\int_{{\mathbb T}^2}\Phi d\bx\rg)\pa_\theta\big( g(\Psi * g)\big)=\nu\pa_\theta^2  g,\quad g(t=0,\theta)=g_0(\theta).
\end{align}
Here, the notation $*$ represents the angular convolution. If the integral $\int_{{\mathbb T}^2}\Phi d\bx$ is positive but different from $1$, one can introduce the effective alignment parameter $\wt\kappa:=\kappa \displaystyle\int \Phi d\bx$ to simplify the equation. For simplicity, we assume that \begin{align}\label{normalize_Phi}
\int_{\Torus^2}\Phi d\bx= 1
\end{align}throughout the paper. We also highlight that our result in the paper will not conflict with experimental conditions in the sense that if the support of smooth
 influence function $\Phi$ is too small, then the integral $\displaystyle \int\Phi d\bx$ is tiny, which yields a small effective $\wt\kappa$.

To begin the analysis, we introduce a new quantity that guarantees effective information exchange among agents. Let us consider the $\bx$-average and remainder of the agent density $f$,
\begin{align}\label{x-avrg_rem}
\lan f\ran(t,\theta) =\frac{1}{|\Torus|^2}\sima{\int_{{\mathbb T}^2}}f(t,\bx,\theta)d\bx,\quad f_\nq(t,\bx,\theta)=f(t,\bx,\theta)-\lan f\ran(t,\theta).
\end{align} 
If the remainder is zero ($f_\nq\equiv 0$), the density function is homogeneous in the spatial variable $\bx$. When combined with the conservation of mass \eqref{Mass}, this constraint $f_\nq\equiv 0$ implies that the marginal density $\displaystyle\rho(t,\bx)=\int f(t,\bx,\te)d\te=\int \lan f\ran(t,\te) d\te=|\Torus|^2\|f\|_{L^1}=|\Torus|^2\|f_0\|_{L^1}$ is constant. Hence, one can find moving agents with equal probability across the spatial domain $\Torus^2.$ Hence, we expect that if the remainder $f_\nq$ decays quickly to zero in suitable norms, the system will rapidly converge to the spatially homogeneous state ($f_\nq\equiv 0$), making information exchange efficient within the system. From a mathematical analysis perspective, it is sufficient to consider the $L^2/H^{-1}$-norms of the remainder $f_\nq$, i.e., $\|f_\nq\|_{L^2}^2$ or $\|f_\nq\|_{H^{-1}}^2$.  

Now we identify the stabilization mechanisms in the system \eqref{eq:bsc} that guarantee the fast decay of the remainder $f_\nq$. To this end, we consider the simplified system $\eqref{eq:bsc_1}_{\kappa=0}$, referred to as the passive scalar equation:
\begin{align} \label{PS_key}
\pa_t\eta+v\bp\cdot\na_\bx\eta=\nu\pa_{\te}^2\eta.
\end{align}
The main idea for analyzing the nonlinear dynamics \eqref{eq:bsc} is to leverage two key stabilization effects in equation \eqref{PS_key}: the \emph{enhanced dissipation phenomenon} and the \emph{mixing phenomenon}. To illustrate these concepts, we introduce a further simplified model with small viscosity $0<\nu\ll 1$:
\begin{align}\label{ps}
\pa_t h+\sin(\theta)\pa_x h=\nu\pa_\theta^2 h,\quad h(t=0)=h_0,\quad \overline{h_0}:=\int_{{\mathbb T}}h_0 dx=0,\quad (x,\theta)\in {\mathbb T}^2.
\end{align}
A classical energy estimate shows that the $L^2$ norm of the solution, $\|h-\overline{h}\|_{L^2}$, decays on the heat dissipation time scale of $\mathcal{O}(\nu^{-1})$. However, it turns out that the remainder $h_\nq$ (see \eqref{x-avrg_rem})  decays on a much faster time scale. The following estimate is derived in various works \cite{BCZ15,Wei18,AlbrittonBeekieNovack21} and proven to be sharp in \cite{CotiZelatiDrivas19},
\begin{align}\label{ED_shear}
\|h_\nq(t)\|_{L^2}\leq C\|h_{\nq}(0)\|_{L^2}\exp\lf\{-\delta\nu^{1/2} t\rg\},\quad\forall t\in[0,\infty).
\end{align}
We observe that the remainder decays on a time scale of $\mathcal{O}(\nu^{-1/2})$, which is significantly shorter than the heat dissipation time scale $\mathcal{O}(\nu^{-1})$ in the parameter regime $0<\nu\ll 1$. This is known as the \emph{enhanced dissipation phenomenon}. The study of the enhanced dissipation phenomenon dates back to Lord Kelvin \cite{Kelvin87} and has attracted much attention in recent years, see, e.g., \cite{BCZ15,Wei18,AlbrittonBeekieNovack21,CobleHe23,CotiZelatiDrivas19,ElgindiCotiZelatiDelgadino18,
FengIyer19,CKRZ08}. \sima{The key enhanced dissipation estimate for $\eqref{PS_key}_{v\equiv1}$ was derived in the papers \cite{FengShiWang22,AlbrittonOhm22, CotiZelatiDietertGerardVaret22,CotiZelatiDietertVaret24}, where the authors study the Patlak-Keller-Segel model for chemotaxis (see, e.g., \cite{Patlak,KS}) and the  Saintillan-Shelley model for active swimmers (see, e.g.,  \cite{SaintillanShelley08, SaintillanShelley12}). The main conclusion from these works is that a suitable modification of the estimate \eqref{ED_shear} still persists for the linear dynamics \eqref{PS_key}, and the estimate yields deep insights into the long-time behavior of these nonlinear models. The hypocoercivity method \cite{BCZ15,villani2009} and the resolvent method \cite{Wei18} were applied to develop the estimate for \eqref{PS_key}. } 

Another important stabilization mechanism associated with the passive scalar equation \eqref{ps} is the \emph{mixing phenomenon}, which captures the fast, viscosity-independent decay of the negative Sobolev norms of the solutions. \sima{As illustrated in \cite{ElgindiCotiZelatiDelgadino18,FengIyer19}, enhanced dissipation and mixing phenomena are closely connected. They have found applications in fluid mechanics  \cite{BM13,Jia22,MasmoudiZhao22,IJ23,Zillinger2014,BGM15I,BGM15II,ChenLiWeiZhang18,WeiZhangZhao20,LiZhao21}, plasma physics \cite{MouhotVillani11,IonescuPausaderWangWidmayer24,Bedrossian21,BedrossianMasmoudiMouhot16,GrenierNguyenRodnianski21,ChaturvediLukNguyen23}, mathematical biology \cite{KiselevXu15,BedrossianHe16,He22,HuKiselevYao23,HuKiselev23}, and various other areas \cite{FengFengIyerThiffeault20, CotiZelatiDolceFengMazzucato,GongHeKiselev21,HeKiselev22}.} 

In Appendix \ref{App_A}, we will summarize the analysis done in \cite{AlbrittonOhm22,CotiZelatiDietertGerardVaret22,CotiZelatiDietertVaret24} and show that enhanced dissipation and mixing persist in the \emph{linear} equation \eqref{PS_key}. \sima{With all these concepts introduced, we are ready to present our first theorem, which captures the \emph{nonlinear} enhanced dissipation and mixing phenomena in the dynamics  \eqref{eq:bsc}. }
\begin{theorem}[Spatial Homogenization]\label{thm_1}
Consider solutions to equation \eqref{eq:bsc} subject to initial condition $0< f_0\in C^\infty({\mathbb T}^3)$. Assume that the speed profile $v(\cdot)\in C^\infty(\rr_+)$ takes values in $(1/2,1]$ and that the $C^\infty$ smooth influence functions $\Phi$ and $\Psi$ satisfy the structural conditions \eqref{phi_sym}, \eqref{Psi_strc}.  \sima{Further assume that the parameters $\kappa,\nu$ take values in $(0,1]$.} Then the following two claims hold.

\noindent
{\bf a) Enhanced Dissipation:} There exists a threshold $\mathfrak a=\mathfrak a(\Phi,\Psi, {\|f_0\|_{L^2}})>0$ 
 such that if  $0<\kappa\leq  \nu^{5/6+\wt{\gamma}}\leq \mathfrak{a}$ ($\wt{\gamma}>0$), then the following  estimate holds\begin{subequations}
\begin{align}\label{nl_ED}
\|f_\nq(t)\|_{L^2}\leq C_1\|f_{0;\nq}\|_{L^2}\exp\lf\{-\delta \nu^{1/2}t\rg\},\quad \forall t\in[0,\infty).
\end{align}
Here, $C_1\geq 1,\, \delta\in(0,1)$ are universal constants. Moreover, the $\bx$-average $\lan f\ran$ is bounded as follows
\begin{align}
 \|\lan f\ran(t)\|_{L^2}\leq C_2 (1+\|\lan f_0\ran\|_{L^2})\lf(1+\frac{\kappa^{1/2}}{\nu^{1/2}}\rg),\quad \forall t\in[0,\infty).   \label{nl_z_mod}
\end{align}\end{subequations}
Here, $C_2$ is a universal constant.

\noindent
{\bf b) Mixing:} If one assumes that the agent speed $v(t)\equiv 1$ and there exists a universal constant $C_\dagger\geq 1$ such that $0<\kappa\leq C_\dagger\nu$, the following estimate holds
\begin{align}
\|f_\nq(t)\|_{\dot H^{-1}}\leq C_3 \frac{  \|f_{ 0;\nq} \|_{H^1}}{ t^{1/2} },\quad \forall t\leq \delta^{-1}\nu^{-1/2}.    \label{nonlin_ID}
\end{align}
Here, the constant $C_3=C_3(\Psi, \Phi,\delta^{-1}, C_\dagger,\|\lan f_0\ran\|_{L^2})$.
\end{theorem}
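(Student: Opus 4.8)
The plan is to project out the $\bx$-mean and treat the remainder $f_\nq$ of \eqref{x-avrg_rem} as a solution of the passive scalar equation \eqref{PS_key} (with $v\equiv1$) forced by nonlinear terms, then run a Duhamel/continuity argument anchored at the \emph{linear} mixing estimate for \eqref{PS_key} proved in Appendix \ref{App_A}. Since $\int_{\Torus^2}\Phi\,d\bx=1$ and $\lan f\ran$ does not depend on $\bx$, one sees from \eqref{Lf}, \eqref{Psi_strc} that $L[f]$ splits into an $\bx$-independent piece built from $\Psi*\lan f\ran$ and the mean-zero piece $L[f_\nq]$, with $\lan L[f_\nq]\ran=0$; subtracting the $\bx$-average of \eqref{eq:bsc_1} therefore yields
\[
\pa_t f_\nq+\bp\cdot\na_\bx f_\nq-\nu\pa_\theta^2 f_\nq=\mathcal N,\qquad
\mathcal N=\kappa\,\pa_\theta\big(f_\nq\,\Psi*\lan f\ran-\lan f\ran\,L[f_\nq]-(f_\nq L[f_\nq])_\nq\big),
\]
(signs of the individual terms fixed by the oddness of $\Psi$), while $\lan f\ran$ solves \eqref{effctv_dym} with an extra source of the form $\kappa\,\pa_\theta\lan f_\nq L[f_\nq]\ran$. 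Writing $e^{t\mathcal L}$ for the solution operator of \eqref{PS_key} with $v\equiv1$ on the $\bx$-mean-zero sector, Duhamel gives $f_\nq(t)=e^{t\mathcal L}f_{0;\nq}+\int_0^t e^{(t-s)\mathcal L}\mathcal N(s)\,ds$; the first term satisfies $\|e^{t\mathcal L}f_{0;\nq}\|_{\dot H^{-1}}\leq C\,t^{-1/2}\|f_{0;\nq}\|_{H^1}$ by the mixing estimate of Appendix \ref{App_A}, valid exactly on the window $t\leq\delta^{-1}\nu^{-1/2}$, and the task reduces to showing the Duhamel correction is of lower order there.

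Next I would gather a priori inputs on $[0,\delta^{-1}\nu^{-1/2}]$, all powered by $\kappa\leq C_\dagger\nu$. The positivity of $f$ and the mass conservation \eqref{Mass} give the \emph{data-independent} bound $\|f_\nq(t)\|_{L^1}\leq 2$, hence, by smoothness of $\Phi,\Psi$, $\|L[f_\nq](t)\|_{W^{1,\infty}}\leq C$; this is exactly what turns the nominally quadratic term $(f_\nq L[f_\nq])_\nq$ into an essentially linear one in $f_\nq$. The $\bx$-average $\lan f\ran$ solves \eqref{effctv_dym}, for which positivity and $\|\lan f\ran\|_{L^1}\equiv1$ give global $H^1$ bounds $\|\lan f\ran(t)\|_{H^1}\leq M(\lan f_0\ran)$ (cf.\ \cite{FrouvelleLiu12}), perturbed by the source $\kappa\pa_\theta\lan f_\nq L[f_\nq]\ran$ only by $O(\kappa\nu^{-1/2})=O(\nu^{1/2})$ over the window. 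An $L^2$ energy estimate for $f_\nq$ — in which the transport term drops after integration by parts, each $\pa_\theta$ in $\mathcal N$ is integrated off $f_\nq$, and the resulting terms are absorbed into $\nu\|\pa_\theta f_\nq\|_{L^2}^2$ via $\kappa^2/\nu\leq C_\dagger^2\nu$ together with the bounds above — then gives $\sup_{t\leq\delta^{-1}\nu^{-1/2}}\|f_\nq(t)\|_{L^2}\leq C(\|f_{0;\nq}\|_{L^2}+M)$. The bootstrap ansatz driving the continuity argument will be $\|f_\nq(t)\|_{\dot H^{-1}}\leq 2C\,t^{-1/2}\|f_{0;\nq}\|_{H^1}$.

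For the Duhamel correction I would use the contraction $\|e^{\tau\mathcal L}w_\nq\|_{\dot H^{-1}}\leq\|w_\nq\|_{\dot H^{-1}_\bx L^2_\theta}$ — valid because the skew transport part preserves, and $\nu\pa_\theta^2$ decreases, the $L^2_\theta$ norm of each $\bx$-Fourier mode — together with $\pa_\theta:L^2\to\dot H^{-1}$, the product inequality $\|gh\|_{\dot H^{-1}}\leq C\|g\|_{\dot H^{-1}}\|h\|_{W^{1,\infty}}$, the uniform control of $L[f_\nq]$, and the $H^1$/$L^2$ bounds on $\lan f\ran$ and $f_\nq$. This reduces $\int_0^t\|e^{(t-s)\mathcal L}\mathcal N(s)\|_{\dot H^{-1}}\,ds$ to $\kappa$ times the time horizon times a suitable norm of $f_\nq$ on the window; since $\kappa t\leq C_\dagger\delta^{-1}\nu^{1/2}$ there, it should come out subdominant to $t^{-1/2}\|f_{0;\nq}\|_{H^1}$, closing the bootstrap and giving \eqref{nonlin_ID} with $C_3$ depending only on $\Psi,\Phi,\delta^{-1},C_\dagger$ and $\|\lan f_0\ran\|_{L^2}$ (through $M$).

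I expect the real obstacle to be exactly this last point. The forcing $\mathcal N$ carries an unavoidable $\pa_\theta$, and under the mixing dynamics the $\theta$-frequencies of $f_\nq$ grow in time, so the mixed norm $\|f_\nq(s)\|_{\dot H^{-1}_\bx H^1_\theta}$ controlling $\|\mathcal N(s)\|_{\dot H^{-1}_\bx L^2_\theta}$ is \emph{not} uniformly bounded on the window; keeping the Duhamel correction below $t^{-1/2}\|f_{0;\nq}\|_{H^1}$ hence forces one to extract every bit of the smallness $\kappa\leq C_\dagger\nu$ and of the finite horizon $\nu^{-1/2}$, and — in the small-$\nu$ regime — to feed in the enhanced-dissipation decay of $\|f_\nq(s)\|_{L^2}$ from part (a), which applies because $\kappa\leq C_\dagger\nu$ falls under the hypotheses of part a) once $\nu$ is below a threshold determined by $C_\dagger$ and $\|f_0\|_{L^2}$ (the complementary range $\nu\gtrsim1$ being a short-time estimate on a bounded interval). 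Calibrating this balance is the crux; local well-posedness, and making the interpolations and the continuity argument precise, are routine.
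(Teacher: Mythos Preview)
Your proposal only addresses part \textbf{b)}, invoking part \textbf{a)} as a black box; but part \textbf{a)} is the substance of the theorem and needs its own argument. The paper runs a bootstrap on the pair $(\|f_\nq\|_{L^2},\|\lan f\ran\|_{L^2})$: on each interval $[T_i,T_{i+1})$ of length $\delta^{-1}\nu^{-1/2}$ it compares $f_\nq$ to the passive scalar $\eta_\nq$ solving \eqref{eta_nq} with the same data at $T_i$, shows by a straight $L^2$ energy estimate that $\|f_\nq-\eta_\nq\|_{L^2}\leq\frac14\|f_\nq(T_i)\|_{L^2}$ under $\kappa\leq\nu^{5/6+\wt\gamma}$, adds this to the linear enhanced dissipation of Theorem~\ref{thm:led} (which gives $\|\eta_\nq(T_{i+1})\|_{L^2}\leq\frac1{32}\|f_\nq(T_i)\|_{L^2}$), and iterates to get geometric decay; the $\lan f\ran$-bound is closed separately via the Nash inequality. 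None of this is in your outline, and you cannot simply cite part \textbf{a)} for part \textbf{b)} either, since the hypothesis $\kappa\leq C_\dagger\nu$ is strictly weaker than $\kappa\leq\nu^{5/6+\wt\gamma}$.

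For part \textbf{b)} your Duhamel-in-$\dot H^{-1}$ route is harder than necessary, and the obstacle you flag is real: the semigroup contraction you quote is in the mixed norm $\dot H^{-1}_\bx L^2_\theta$, while the trade $\pa_\theta:L^2\to\dot H^{-1}$ you want lands in the full $\dot H^{-1}_{\bx,\theta}$; these do not compose, so you are forced to control $\|f_\nq\|_{\dot H^{-1}_\bx H^1_\theta}$, exactly the quantity you concede may grow. The paper sidesteps this entirely. Notice that your Duhamel correction $\int_0^te^{(t-s)\mathcal L}\mathcal N(s)\,ds$ is literally $f_\nq-\eta_\nq$, and this has already been estimated \emph{in $L^2$} during the proof of part \textbf{a)}: the energy bound \eqref{diff_f_eta} (which uses only $\kappa\leq C_\dagger\nu$ and the zero-mode bound, not the stronger $\kappa\leq\nu^{5/6+\wt\gamma}$) gives $\|f_\nq-\eta_\nq\|_{L^2}\leq C(\kappa^{3/2}\nu^{-5/4}+\kappa\nu^{-3/4})\|f_{0;\nq}\|_{L^2}\leq C\nu^{1/4}\|f_{0;\nq}\|_{L^2}$ on $[0,\delta^{-1}\nu^{-1/2}]$. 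Feed this through the trivial embedding $\|\cdot\|_{\dot H^{-1}}\leq\|\cdot\|_{L^2}$, add the linear mixing estimate \eqref{linear_ID} for $\eta_\nq$, and use $\nu^{1/4}\leq\delta^{-1/2}t^{-1/2}$ on the window to absorb the deviation into the mixing rate. Three lines; no $H^1_\theta$ control on $f_\nq$, no $\dot H^{-1}$ bootstrap.
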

\begin{remark}
Here, we do not impose any constraint on the support of the influence functions $(\Phi,\Psi)$. Hence, the agents might only interact with close neighbors. However, the migrating agents will rapidly spread out in space thanks to the transport-induced enhanced dissipation effect. This is the biological interpretation of \eqref{nl_ED}.  
\end{remark}
\begin{remark}
Our proof also works for the case where there exists `anti-alignment' such that $\int \Phi d\bx =0$. In this case, the solution $f$ will relax to a constant state as time approaches infinity.
\end{remark}
\begin{remark}
   The estimate \eqref{nonlin_ID} suggests that ``hydrodynamic quantities" of the form $\displaystyle{\int f(t,\bx,\te) g(\te)d\te}$, where $g(\cdot) \in C^2$, converge to spatially homogeneous states $\displaystyle{\int \lan f\ran(t,\te) g(\te)d\te}$ (in some weak spaces) at a rate that is independent of $\nu$ until the time scale $\mathcal{O}(\nu^{-1/2})$.
\end{remark}
As a result of Theorem \ref{thm_1}, we observe that the density $f$ quickly homogenizes in the $\bx$ direction, and the dynamics simplifies to that of \eqref{effctv_dym}. Hence, we examine the energy structure and stationary states of \eqref{effctv_dym}. To present the main results, we define the primitive function:
\begin{align}\label{U}
\mathbb{U}(\theta)=\int_{-\pi}^\theta\Psi(\eta)d\eta=\int_{-\pi}^\theta \sin(\eta)\psi(\eta)d\eta\in C^\infty({\mathbb T}).
\end{align}
We also note that if $\psi\geq 0$,  $\mathbb{U}(\theta)\leq 0$ for all $\theta\in {\mathbb T}$. For the sake of simplicity, we also consider the following concrete example $
\Psi_0,\, \mathbb{U}_0$:
\begin{align}\label{psiU0}
\Psi_0(\theta):=\sin(\theta) , \quad \mathbb{U}_0(\theta):=\int_{-\pi}^\theta\sin(\eta)d\eta=-1-\cos(\theta)\leq 0. 
\end{align}

Our second main theorem concerning the equation \eqref{effctv_dym} reads as follows. 
\begin{theorem}\label{thm_2}Consider the spatial homogeneous equation \eqref{effctv_dym} subject to conditions \eqref{phi_sym} and \eqref{Psi_strc}.

\noindent
a) Consider regular solutions $g\in C^3_{t,\theta}$ to \eqref{effctv_dym}. The free energy
\begin{align}\label{Free_energy}
F[g]:=\nu\int_{\mathbb T} g\log g d\theta +\frac{\kappa}{2}\iint_{{\mathbb T}\times {\mathbb T}} \mathbb{U}(\theta-w)g(\theta)g(w)dwd\theta.
\end{align} is decaying in time, i.e., 
\begin{align}\label{Fisher}
\frac{d}{dt}F[g]=&-\int g|\nu\pa_\theta\log  g +\kappa \Psi*g |^2=:-\mathcal{D}[g]\leq 0.
\end{align}

\noindent b) If
\begin{align}
\sup_{\ell\neq0}\frac{\kappa}{2\pi}|\wh {\mathbb{U}}(\ell)|<\nu,
\end{align}
the constant state $g\equiv \frac{1}{2\pi}$ is linearly stable. If there exists $\ell\in \mathbb{Z}\backslash\{0\}$ such that
\begin{align}
-\frac{\kappa}{2\pi}\Re\wh{\mathbb{U}}(\ell)>\nu,
\end{align}
the constant state $g\equiv \frac{1}{2\pi}$ is linearly unstable. 
\end{theorem}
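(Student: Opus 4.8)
The plan is to handle the two parts separately; both are structural once the correct normalizations and Fourier conventions are fixed.

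\textbf{Part a).} First I would rewrite \eqref{effctv_dym} (using the normalization \eqref{normalize_Phi}) in McKean--Vlasov divergence form $\partial_t g=\partial_\theta J$ with flux $J:=\nu\partial_\theta g+\kappa\, g\,(\Psi* g)=g\bigl(\nu\partial_\theta\log g+\kappa\,\Psi* g\bigr)$, using $\partial_\theta(\mathbb{U}* g)=\Psi* g$ since $\mathbb{U}'=\Psi$. Next I would record that $g(t,\cdot)>0$ for all $t$ whenever $g_0>0$, by the parabolic maximum principle applied to \eqref{effctv_dym} with $\kappa\,\Psi* g$ regarded as a smooth bounded drift along the given $C^3$ solution; positivity is exactly what makes $\log g$, $J/g$, and $\mathcal{D}[g]$ well defined and finite. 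Then I would differentiate $F$: mass conservation $\int\partial_t g\,d\theta=0$ kills the constant in $\partial_t(g\log g)=(1+\log g)\partial_t g$, and the evenness $\mathbb{U}(\theta-w)=\mathbb{U}(w-\theta)$ (a consequence of \eqref{Psi_strc}, which makes $\Psi$ odd and $\mathbb{U}$ even) gives $\frac{d}{dt}\tfrac12\iint\mathbb{U}(\theta-w)g(\theta)g(w)\,dw\,d\theta=\int(\mathbb{U}* g)\,\partial_t g\,d\theta$. Hence $\frac{d}{dt}F[g]=\int\bigl(\nu\log g+\kappa\,\mathbb{U}* g\bigr)\partial_\theta J\,d\theta$; integrating by parts on $\mathbb{T}$ (no boundary terms) and using $\partial_\theta\bigl(\nu\log g+\kappa\,\mathbb{U}* g\bigr)=\nu\partial_\theta\log g+\kappa\,\Psi* g=J/g$ yields $\frac{d}{dt}F[g]=-\int J^2/g\,d\theta=-\mathcal{D}[g]\le 0$, which is exactly \eqref{Fisher}.

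\textbf{Part b).} I would linearize around the uniform state: set $g=\frac{1}{2\pi}+\phi$ with $\int_{\mathbb{T}}\phi\,d\theta=0$ (forced by mass conservation). Since $\int\Psi=0$ by \eqref{Psi_strc}, one has $\Psi*\frac{1}{2\pi}=0$, so $\Psi* g=\Psi*\phi$, and discarding the quadratic term $\kappa\,\partial_\theta(\phi\,\Psi*\phi)$ gives the linearized equation $\partial_t\phi=\mathcal{L}\phi:=\nu\,\partial_\theta^2\phi+\frac{\kappa}{2\pi}\,\partial_\theta(\Psi*\phi)$. The operator $\mathcal{L}$ is a Fourier multiplier on mean-zero functions on $\mathbb{T}$: expanding in $e^{i\ell\theta}$ and using $\widehat\Psi(\ell)=i\ell\,\widehat{\mathbb{U}}(\ell)$ (from $\mathbb{U}'=\Psi$) one finds $\mathcal{L}e^{i\ell\theta}=\lambda_\ell e^{i\ell\theta}$ with $\lambda_\ell=-\nu\ell^2+\frac{\kappa}{2\pi}(i\ell)\widehat\Psi(\ell)=-\ell^2\bigl(\nu+\frac{\kappa}{2\pi}\widehat{\mathbb{U}}(\ell)\bigr)$ for $\ell\in\mathbb{Z}\setminus\{0\}$. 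Because $\mathbb{U}$ is real and even, $\widehat{\mathbb{U}}(\ell)=\Re\widehat{\mathbb{U}}(\ell)\in\mathbb{R}$, so every $\lambda_\ell$ is real, and $\widehat{\mathbb{U}}(\ell)\to 0$ forces $\lambda_\ell\to-\infty$, so only finitely many modes can be nonnegative. If $\sup_{\ell\neq0}\frac{\kappa}{2\pi}|\widehat{\mathbb{U}}(\ell)|<\nu$, then $\nu+\frac{\kappa}{2\pi}\widehat{\mathbb{U}}(\ell)\ge\nu-\frac{\kappa}{2\pi}|\widehat{\mathbb{U}}(\ell)|>0$ for all $\ell\neq0$, hence $\sup_{\ell\neq0}\lambda_\ell<0$ and $g\equiv\frac{1}{2\pi}$ is linearly (indeed exponentially) stable. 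If instead some $\ell_0\neq0$ satisfies $-\frac{\kappa}{2\pi}\Re\widehat{\mathbb{U}}(\ell_0)>\nu$, i.e.\ $\nu+\frac{\kappa}{2\pi}\widehat{\mathbb{U}}(\ell_0)<0$, then $\lambda_{\ell_0}=-\ell_0^2\bigl(\nu+\frac{\kappa}{2\pi}\widehat{\mathbb{U}}(\ell_0)\bigr)>0$, so the mode $e^{i\ell_0\theta}$ grows like $e^{\lambda_{\ell_0}t}$ and $g\equiv\frac{1}{2\pi}$ is linearly unstable.

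\textbf{Where the difficulty lies.} Neither part hides a deep obstacle. In part a) the only genuinely technical step is promoting the formal energy identity to a rigorous one for merely $C^3$ solutions, which reduces to the strict positivity/regularity of $g$ (from the maximum principle and the smoothness of $\Phi,\Psi$) and the vanishing of boundary terms in the integration by parts on $\mathbb{T}$. In part b) the single thing to get right is the Fourier normalization so that the $2\pi$ factors match the statement, together with the observation that the evenness of $\mathbb{U}$ makes the symbol $\lambda_\ell$ real, so that linear (in)stability is decided purely by its sign.
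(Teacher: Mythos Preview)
Your proposal is correct and follows essentially the same approach as the paper: in part a) you rewrite the equation in divergence form $\partial_t g=\partial_\theta J$ with $J=g(\nu\partial_\theta\log g+\kappa\,\Psi*g)$, identify $\nu\log g+\kappa\,\mathbb{U}*g$ as the variational derivative of $F$, and integrate by parts; in part b) you linearize around $\frac{1}{2\pi}$, diagonalize in Fourier, and read off the eigenvalues $\lambda_\ell=-\ell^2\bigl(\nu+\frac{\kappa}{2\pi}\widehat{\mathbb{U}}(\ell)\bigr)$. The paper does exactly this, though it writes the computation a bit more tersely; your additional remarks (positivity of $g$ via the maximum principle, and the observation that $\mathbb{U}$ even forces $\widehat{\mathbb{U}}(\ell)\in\mathbb{R}$ so the symbol is real) are helpful clarifications the paper leaves implicit.
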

As a simple corollary of this theorem, we partially recover the observation of \cite{FrouvelleLiu12}.
\begin{cor}
Assume $\Psi(\cdot)=\sin(\cdot)$. Then the following two statements hold
\begin{itemize}
\item If $\frac{\kappa}{\nu}<2$, the constant state is linearly stable;

\item If $\frac{\kappa}{\nu}>2$, the constant state is linearly unstable.
\end{itemize}
\end{cor}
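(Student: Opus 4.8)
The plan is to read the corollary off Theorem~\ref{thm_2}(b) after computing the Fourier coefficients of the primitive $\mathbb{U}$. With $\Psi(\cdot)=\sin(\cdot)$ one has $\psi\equiv 1$, which is positive, smooth and even, and $\int_{-\pi}^\pi\sin\theta\,d\theta=0$, so the structural conditions \eqref{Psi_strc} hold (the hypothesis \eqref{phi_sym} on $\Phi$ is assumed throughout); moreover by \eqref{psiU0} the primitive is exactly $\mathbb{U}(\theta)=\mathbb{U}_0(\theta)=-1-\cos\theta$. Since $-1-\cos\theta=-1-\tfrac12 e^{i\theta}-\tfrac12 e^{-i\theta}$, with the normalization $\wh{\mathbb{U}}(\ell)=\int_{-\pi}^{\pi}\mathbb{U}(\theta)e^{-i\ell\theta}\,d\theta$ underlying Theorem~\ref{thm_2}(b) I would record $\wh{\mathbb{U}}(0)=-2\pi$, $\wh{\mathbb{U}}(\pm 1)=-\pi$, and $\wh{\mathbb{U}}(\ell)=0$ for all $|\ell|\ge 2$. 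In particular every nonzero mode of $\mathbb{U}$ is real, which is what makes both criteria in Theorem~\ref{thm_2}(b) collapse to a single comparison.

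First I would treat the stable regime. Because $\wh{\mathbb{U}}(\ell)$ vanishes for $|\ell|\ge 2$, the supremum in the stability criterion is attained at $\ell=\pm1$, so
\begin{align*}
\sup_{\ell\neq 0}\frac{\kappa}{2\pi}\bigl|\wh{\mathbb{U}}(\ell)\bigr|=\frac{\kappa}{2\pi}\cdot\pi=\frac{\kappa}{2}.
\end{align*}
Hence the hypothesis $\sup_{\ell\neq 0}\frac{\kappa}{2\pi}|\wh{\mathbb{U}}(\ell)|<\nu$ is equivalent to $\kappa/2<\nu$, i.e.\ to $\kappa/\nu<2$, and Theorem~\ref{thm_2}(b) gives linear stability of $g\equiv\frac{1}{2\pi}$. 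For the unstable regime I would take $\ell=1$ in the instability criterion and use $\Re\,\wh{\mathbb{U}}(1)=\wh{\mathbb{U}}(1)=-\pi$:
\begin{align*}
-\frac{\kappa}{2\pi}\Re\,\wh{\mathbb{U}}(1)=\frac{\kappa}{2\pi}\cdot\pi=\frac{\kappa}{2},
\end{align*}
so $-\frac{\kappa}{2\pi}\Re\,\wh{\mathbb{U}}(\ell)>\nu$ holds for $\ell=1$ precisely when $\kappa/\nu>2$, and Theorem~\ref{thm_2}(b) yields linear instability. This closes both cases.

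There is essentially no obstacle: the entire analytic content sits in Theorem~\ref{thm_2}(b), and the corollary is a bookkeeping exercise on the three nonvanishing Fourier coefficients of $-1-\cos\theta$. The only point requiring a moment of care is consistency of the Fourier normalization --- the factors of $2\pi$ in Theorem~\ref{thm_2}(b) must be the ones under which $\wh{\mathbb{U}}(\pm1)=-\pi$ (rather than $-\tfrac12$), after which the threshold $\kappa/\nu=2$ appears automatically. The borderline case $\kappa/\nu=2$ is, as usual for a linearized criterion, not addressed.
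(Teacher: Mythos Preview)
Your proposal is correct and mirrors exactly what the paper does: the corollary is not given a separate proof but is flagged inside the proof of Theorem~\ref{thm_2}(b), where the authors note that for $\Psi_0(\cdot)=\sin(\cdot)$ the stability condition collapses to $\kappa/2<\nu$ (see \eqref{lin_r}). Your explicit computation of $\wh{\mathbb U}(\pm1)=-\pi$ under the normalization $\wh{\mathbb U}(\ell)=\int_{-\pi}^\pi\mathbb U(\theta)e^{-i\ell\theta}\,d\theta$ is consistent with the paper's Fourier convention, and your observation that the real-valuedness of $\wh{\mathbb U}$ makes the two criteria in Theorem~\ref{thm_2}(b) reduce to the single threshold $\kappa/\nu=2$ is precisely the content of the corollary.
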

If the angular influence function is sine, then much more is known for the spatial homogeneous problem \eqref{effctv_dym}. In the classical work \cite{FrouvelleLiu12}, the authors are able to show that the solution $g$ converges to an element in the family of Fisher-von Mises distributions which contains the energy minimizers of the free energy \eqref{Free_energy}. A key step in their proof is to derive a LaSalle principle.


For the full spatially inhomogeneous model \eqref{eq:bsc}, we are able to exploit the enhanced dissipation estimate \eqref{nl_ED} and partially recover the result of \cite{FrouvelleLiu12}. 
\begin{theorem}
\label{thm_3} Under the conditions of Theorem \ref{thm_1} {\bf a)}, the solutions $f$ to \eqref{eq:bsc_1}, \eqref{Lf} have the following asymptotic behaviors.

\noindent
{\bf a)} The solutions converge to a family of limiting configurations as $t$ approaches $\infty$ in the sense that
\begin{align}\label{convergence}
\lim_{t\rightarrow\infty} \inf_{G\in \mathcal{S}_\infty}\|f(t,\cdot)-G(\cdot)\|_{H_{\bx,\theta}^M}=0,\quad \forall M\in \mathbb{N}.
\end{align}
Here, the set $\mathcal{S}_\infty:=\{f\in C^\infty({\mathbb T}) |\mathcal{D}[f]=0\}$ is the set on which the Fisher information $\mathcal{D}$ \eqref{Fisher} vanishes.

\noindent
{\bf b)} There exists a constant $\mathfrak{b}=\mathfrak{b}(\Phi,\Psi)\in(0,1)$ such that, for the parameter regime $\kappa\leq \mathfrak b \nu\leq \mathfrak b^2$, the solutions $f$ converge to the constant states $f_\infty = \text{const.}$ 
as time approaches $+\infty$. 

\noindent
{\bf c)} Furthermore, if $\Psi(\cdot)=\sin(\cdot)$, the limiting state space $\mathcal{S}_\infty$ can be characterized by the ratio $\kappa/\nu$.
\begin{itemize}\item If the diffusion is stronger, i.e., $\kappa/\nu\leq 2$, the limiting state $\mathcal S_\infty:=\{G(\te)=\text{const.}\}$. 
\item If the alignment is stronger, i.e., $\kappa/\nu>2$, the limiting state $\mathcal S_\infty$ is characterized by a complex number  $r=|r|\exp\{i\arg r\}\in \mathbb{C}$, i.e., $\mathcal S_\infty:=\{G\in C^\infty|G(\te)=g_s^{(r)}(\te),\ \forall \te\in\Torus,\ g_s^{(r)}\text{ are defined in } \eqref{g_s} \}$. Here, $\arg r\in [-\pi,\pi]$, and $|r|$ satisfies  a compatibility condition \eqref{comp_cond} and takes two distinct values, i.e., $|r|\in\{r_1:=0,r_2:=r_2(\kappa/\nu)\}$.\end{itemize} 
\end{theorem}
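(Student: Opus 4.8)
The plan is to combine the nonlinear enhanced dissipation estimate \eqref{nl_ED} with a LaSalle-type invariance argument for the effective equation \eqref{effctv_dym}, so that the $\bx$-averaged density $\lan f\ran$ gets trapped in the set $\mathcal S_\infty$ on which the Fisher information \eqref{Fisher} vanishes. The first task is to upgrade the $L^2$ control of Theorem \ref{thm_1} to \emph{time-uniform} Sobolev bounds $\sup_{t\ge0}\|f(t)\|_{H^M_{\bx,\theta}}<\infty$ for every $M$. Since $\pa_\bx^\alpha\lan f\ran\equiv0$ for $|\alpha|\ge1$, every $\bx$-derivative of $f$ is an $\bx$-derivative of $f_\nq$, so in the spatial directions it suffices to re-run the enhanced-dissipation scheme of Theorem \ref{thm_1} on the differentiated remainders $\pa_\bx^\alpha\pa_\theta^j f_\nq$ (the transport term contributes nothing to the $\bx$-energy by antisymmetry, and the differentiated alignment term carries the small factor $\kappa$), while the $\theta$-regularity of $\lan f\ran$ comes from the parabolic smoothing in \eqref{effctv_dym} together with the a priori bound \eqref{nl_z_mod}. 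Interpolating these uniform bounds against \eqref{nl_ED} gives $\|f_\nq(t)\|_{H^M_{\bx,\theta}}\to0$ for every $M$, and it furnishes the precompactness of the trajectory used below.

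Next I would derive the effective dynamics for $z(t,\theta):=\lan f\ran(t,\theta)$. Averaging \eqref{eq:bsc_1} in $\bx$, using $\int_{\Torus^2}\Phi\,d\bx=1$ and the fact that the cross terms vanish because $\lan f_\nq\ran\equiv0$ and $\lan L[f_\nq]\ran\equiv0$, one finds that $z$ solves \eqref{effctv_dym} with a source $E(t)=-\kappa\,\pa_\theta\lan f_\nq L[f_\nq]\ran$; by the previous step and \eqref{nl_ED}, $\|E(t)\|_{L^2_\theta}\le Ce^{-\delta'\nu^{1/2}t}$ for some $\delta'>0$, hence $E$ is integrable in $t$. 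Since $f_0>0$ is preserved and (by a maximum-principle/Harnack argument for the parabolic equation with this source) $z$ stays bounded above and bounded below away from $0$ uniformly in time, the free energy $F[z]$ of \eqref{Free_energy} is bounded below (the entropy term is bounded below on mass-one densities and $\mathbb{U}$ is bounded) and $\pa_\theta\tfrac{\delta F}{\delta z}=\nu\pa_\theta\log z+\kappa\,\Psi*z$ is uniformly bounded. Consequently $\tfrac{d}{dt}F[z]=-\mathcal D[z]+\kappa\langle\pa_\theta\tfrac{\delta F}{\delta z},\lan f_\nq L[f_\nq]\ran\rangle$ integrates to $\int_0^\infty\mathcal D[z(t)]\,dt<\infty$, and since $\tfrac{d}{dt}\mathcal D[z(t)]$ is bounded by the uniform regularity, Barbalat's lemma yields $\mathcal D[z(t)]\to0$. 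Precompactness plus continuity of $\mathcal D$ on uniformly positive densities then force every subsequential limit of $z(t)$ into $\mathcal S_\infty$, i.e.\ $\mathrm{dist}_{H^M_\theta}(z(t),\mathcal S_\infty)\to0$; combined with $f_\nq(t)\to0$ in every $H^M_{\bx,\theta}$ this is exactly \eqref{convergence}, proving part \textbf{a)}.

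Parts \textbf{b)} and \textbf{c)} reduce to identifying $\mathcal S_\infty=\{g\in C^\infty(\Torus):g\ge0,\ \int_\Torus g=1,\ \mathcal D[g]=0\}$. If $\mathcal D[g]=0$ then $\nu\pa_\theta\log g=-\kappa\,\Psi*g=-\kappa\,\pa_\theta(\mathbb{U}*g)$ on $\{g>0\}$; integrating gives $g=Z^{-1}\exp\{-\tfrac{\kappa}{\nu}\mathbb{U}*g\}$ there, whose right side is smooth and strictly positive, so $\{g>0\}=\Torus$ and $g=Z^{-1}\exp\{-\tfrac{\kappa}{\nu}\mathbb{U}*g\}$ on all of $\Torus$. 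For part \textbf{b)}, when $\kappa\le\mathfrak b\nu$ the map $g\mapsto Z^{-1}\exp\{-\tfrac{\kappa}{\nu}\mathbb{U}*g\}$ (with $Z$ the normalizer) is a contraction on a suitable ball of mass-one densities, using $\|\mathbb{U}\|_{C^1}$ and $\kappa/\nu\le\mathfrak b$; its only fixed point is the constant, so $\mathcal S_\infty=\{\mathrm{const}\}$ and part \textbf{a)} sharpens to convergence to the constant state. For part \textbf{c)} with $\Psi=\sin$ one has $\mathbb{U}=\mathbb{U}_0=-1-\cos$, so $\mathbb{U}_0*g(\theta)=-1-|r|\cos(\theta-\arg r)$ with $r:=\int_\Torus e^{i\theta}g(\theta)\,d\theta$, and the Gibbs states become the Fisher--von Mises densities $g_s^{(r)}$ of \eqref{g_s} with concentration $(\kappa/\nu)|r|$; imposing the self-consistency $r=\int e^{i\theta}g_s^{(r)}$ produces the compatibility relation \eqref{comp_cond}, which reads $|r|=I_1(\tfrac{\kappa}{\nu}|r|)/I_0(\tfrac{\kappa}{\nu}|r|)$. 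Since $x\mapsto I_1(x)/I_0(x)$ is increasing with slope $1/2$ at the origin (cf.\ \cite{FrouvelleLiu12}), this has only the root $|r|=0$ when $\kappa/\nu\le2$ and exactly one further root $|r|=r_2(\kappa/\nu)>0$ when $\kappa/\nu>2$, which gives the stated description of $\mathcal S_\infty$.

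The main obstacle is the first step: proving \emph{time-uniform} high-order Sobolev bounds — especially in the non-diffusive $\bx$-direction — with constants independent of $t$, since both the Barbalat/LaSalle argument and the $H^M$-convergence in \eqref{convergence} rest on the resulting precompactness. This essentially amounts to re-running the (already delicate) enhanced-dissipation machinery of Theorem \ref{thm_1} on all derivatives $\pa_\bx^\alpha\pa_\theta^j f_\nq$ and carefully absorbing the differentiated alignment nonlinearity via the smallness of $\kappa$. A secondary difficulty is the uniform-in-time positive lower bound on $z=\lan f\ran$, needed so that $\pa_\theta\log z$, and hence the free-energy production correction, remains bounded.
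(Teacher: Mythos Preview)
Your overall LaSalle-type strategy for part \textbf{a)} --- compute $\tfrac{d}{dt}F[\langle f\rangle]=-\mathcal D[\langle f\rangle]+\text{(error from }f_\nq)$, control the error via enhanced dissipation, and extract limits by precompactness --- is exactly what the paper does. The differences are in the technical implementation, and in two places the paper's route is both simpler and more robust than what you sketch.

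First, for the uniform Sobolev bounds you propose to re-run the enhanced-dissipation machinery on each $\partial_\bx^\alpha\partial_\theta^j f_\nq$. The paper avoids this: it proves only a \emph{slowly growing} bound $\|\partial_\bx^i\partial_\theta^j f\|_{L^2}\le C\nu^{-N}\exp\{C\nu^{2/3}t\}$ by standard energy estimates (commutators with the transport are absorbed by lower-order terms via induction), and then \emph{interpolates} this against the $L^2$ enhanced dissipation $\|f_\nq\|_{L^2}\lesssim e^{-\delta\nu^{1/2}t}$. Since $\nu^{2/3}\ll\nu^{1/2}$, the interpolation yields $\|f_\nq\|_{H^n}\lesssim e^{-\delta_n\nu^{1/2}t}$ for $n<M$ and then uniform $H^n$ bounds on $\langle f\rangle$ follow from Gagliardo--Nirenberg as in the proof of \eqref{nl_z_mod}. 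This sidesteps the delicate issue of propagating the hypocoercivity functional through $\theta$-commutators.

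Second, you need a time-uniform lower bound on $z=\langle f\rangle$ to control $\partial_\theta\log z$ in your Barbalat step, and you flag this as a secondary difficulty. The paper never proves such a lower bound. Instead, when estimating the error term $\kappa\nu\int\langle L[f_\nq]f_\nq\rangle\,\tfrac{\partial_\theta\langle f\rangle}{\langle f\rangle}$, it uses the pointwise observation $\langle f\rangle(\theta)\ge (f_\nq)^-(\bx,\theta)$ (from $f\ge0$) to get $\|\langle L[f_\nq]f_\nq/\langle f\rangle\rangle\|_{L^\infty_\theta}\le C\|L[f_\nq]\|_{L^\infty}$ directly, with no denominator issue. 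The paper then closes via the contradiction argument of \cite{FrouvelleLiu12} (with the $\mathcal D_\varepsilon$ regularization to handle the possibility that $f_\infty$ touches zero) rather than Barbalat; this also avoids needing $\tfrac{d}{dt}\mathcal D[z]$ bounded.

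For part \textbf{b)}, your contraction argument on the fixed-point equation identifies $\mathcal S_\infty=\{\text{const.}\}$ and then appeals to part \textbf{a)}; the paper instead computes $\tfrac{d}{dt}\|\langle f\rangle-\overline{\langle f\rangle}\|_{L^2}^2$ directly and shows exponential decay once $\kappa\le\mathfrak b\nu$ via Poincar\'e. Both work; the paper's gives a rate. Your part \textbf{c)} matches the paper (and \cite{FrouvelleLiu12}) essentially verbatim, modulo a $2\pi$ normalization in the compatibility relation.
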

\begin{remark}
Our theorem generalizes the result derived in \cite{FrouvelleLiu12} in the sense that the equation that we consider is spatially inhomogeneous, and the transport effect is dominant. Moreover, the spatial influence functions that we consider are allowed to be compactly supported in space. 
\end{remark}
The paper is organized as follows: in Section \ref{sec:ED_ID}, we derive Theorem \ref{thm_1}; in Section \ref{sec:eff_sym}, we derive Theorem \ref{thm_2} and Theorem \ref{thm_3}. 

\noindent
{\bf Notation: }Throughout the paper, the constants  $C$ depend on the norm $\|\Phi\|_{W_\bx^{M,\infty}},\ \|\Psi\|_{W_\te^{M,\infty}}$, $M\in \mathbb{N}$ and change from line to line. 

\section{Nonlinear \sima{Enhanced} Dissipation and Mixing}\label{sec:ED_ID}
In this section, we prove the main Theorem \ref{thm_1}. 
\begin{proof}[Proof of Theorem \ref{thm_1} {\bf a)}]
We divide the proof into three steps. The general plan is to apply the bootstrap argument. In {\bf Step \# 1}, we explicitly spell out the bootstrap assumptions and conclusions. In the latter steps, we prove each conclusion. 

\noindent{\bf Step \# 1: General setup.} First of all, we recall the definitions \eqref{x-avrg_rem} and decompose the solution $f$ to  \eqref{eq:bsc} as follows:\begin{subequations}
\begin{align}
\pa_t \lan f\ran +&\kappa\pa_\theta\lan L[f]f\ran=\nu\pa_{\theta}^2\lan f\ran,\quad \lan f\ran_0=\lan f_0\ran;\label{eq_average}
\\
\pa_t f_\nq+&v\bp\cdot \na_\bx f_\nq+\kappa\pa_\theta(L[f]f)_\nq=\nu\pa_{\theta}^2f_\nq,\quad f_{0;\nq}=(f_0)_\nq.\label{eq_nq}
\end{align}
\end{subequations}We first observe that \begin{align}\label{cons_mass}
\|\lan f\ran\|_{L_{\te}^1}=\frac{1}{2\pi}\|f\|_{L_{\bx,\theta}^1}=\frac{1}{2\pi}\|f_{0}\|_{L_{\bx,\theta}^1}.
\end{align}
 Next, we lay out the bootstrap assumptions. Assume that $[0,T_\star)$ is the maximal time interval such that the following hypotheses hold\begin{subequations}
\begin{align}
\|f_\nq(t)\|_{L^2}\leq 4e\|f_{\nq}(0)\|_{L^2}\exp\lf\{-\delta\nu^{1/2}t\rg\};\label{nzm_hyp}\\
\label{zm_hyp}
\|\lan f\ran(t)\|_{L^2}^2\leq 2\mathfrak{B}\lf(\frac{\kappa }{\nu }+1\rg),\quad\forall t\in[0,T_\star). 
\end{align}
\end{subequations}
As in the works \cite{BGM15I,BedrossianHe16,CotiZelatiDolceFengMazzucato}, to prove Theorem \ref{thm_1} a), it is enough to prove the stronger conclusion on the same time interval:\begin{subequations}
\begin{align}
\|f_\nq(t)\|_{L^2}\leq 2e \|f_{\nq}(0)\|_{L^2}\exp\lf\{-\delta\nu^{1/2}t\rg\};\label{nzm_con}\\
\label{zm_con}
\|\lan f\ran(t)\|_{L^2}^2\leq \mathfrak{B}\lf(\frac{\kappa }{\nu }+1\rg),\quad\forall t\in[0,T_\star).
\end{align}
\end{subequations}
Here to prove the conclusion, we will choose the $\mathfrak{B}$ appropriately and $\kappa,\ \nu$ small enough. {The explicit choices of $\mathfrak B$ is of the form $(\text{constant}+\|\lan f_0\ran\|_{L^2}^2)$. We refer the interested readers to the explicit expression \eqref{chc_B}.} 
 
\noindent
{\bf Step \# 2: Nonlinear enhanced dissipation \eqref{nzm_con}.}
Now we analyze the system \eqref{eq_nq}. To this end, we decompose the time horizon into intervals of length $\delta^{-1}\nu^{-1/2}$, i.e.,
\begin{align*}
[0,\infty)=\bigcup_{i=0}^\infty\ [i\delta^{-1}\nu^{-1/2},(i+1)\delta^{-1}\nu^{-1/2})=:\bigcup_{i=0}^\infty\ [T_i, T_{i+1}).
\end{align*}
Here, $\delta$ is chosen such that 
\begin{align}
C_0\exp\lf \{-\frac{\delta_0}{\delta}\rg\} \leq \frac{1}{32}.\label{chc_delta}
\end{align}
Here, the constants $C_0,\ \delta_0$ are constants defined in Theorem \ref{thm:led}. 
We consider the passive scalar solution initiated from $t=T_i$ with data $f_\nq(T_i, \bx,\theta)$, i.e.,
\begin{align}\label{eta_nq}
\pa_t \eta_\nq +v\bp\cdot \na_\bx \eta_\nq=\nu\pa_{\theta}^2 \eta_\nq,\quad \eta_\nq(t=T_i)=f_\nq(t=T_i). 
\end{align} 
To show that there exists nonlinear enhanced dissipation for the $f_\nq$, we compute the deviation between $f_\nq$ and $\eta_\nq$ on the interval $[T_i,T_{i+1})$. Standard $L^2$-energy estimate yields that
\begin{align} \label{DTR}
\frac{1}{2}\frac{d}{dt}\|f_\nq\|_{L^2}^2=&-\nu \|\pa_\theta  f_\nq\|_{L^2}^2+\kappa \iint \pa_\theta f_\nq  \lf(L[f]_\nq \lan f\ran+\lan L[f]\ran f_\nq+L[f]_\nq f_\nq\rg) d\bx d\theta\\
=:&-\mathfrak{D}+\mathfrak{T}_{\nq 0}+\mathfrak T_{0\nq}+\mathfrak T_{\nq \nq}; \end{align}\begin{align} 
\label{DTD} \quad\frac{1}{2}\frac{d}{dt}\|&f_\nq-\eta_\nq\|_{L^2}^2\\
=&-\nu \|\pa_\theta (f_\nq-\eta_\nq)\|_{L^2}^2+\kappa \iint \pa_\theta( f_\nq-\eta_\nq) \lf(L[f]_\nq \lan f\ran+\lan L[f]\ran f_\nq+L[f]_\nq f_\nq\rg) d\bx d\theta\\
=:&-{D}+T_{\nq 0}+T_{0\nq}+T_{\nq \nq}. 
\end{align} 
We observe that the terms $ \mf T$ and $T$ have similar structures, and hence we focus on one set of them. To understand the nonlinearity, we observe the following fact
\begin{align}
\lan L[f]\ran(\theta)=&\frac{1}{(2\pi)^2}\int_{{\mathbb T}}\Psi(w-\theta)\int_{{\mathbb T}^2}\int_{{\mathbb T}^2} \Phi(\bx-\by)f(\by,w) d\by d\bx dw\\
=&\frac{1}{(2\pi)^2}\int_{{\mathbb T}}\Psi(w-\te)\int_{{\mathbb T}^2}\int_{{\mathbb T}^2} \Phi(\by)f(\bx-\by,w)  d\bx d\by dw\\
=&\int_{{\mathbb T}}\Psi(w-\te)\int_{{\mathbb T}^2} \Phi(\by)\lan f\ran(w) d\by dw\\
=&\sima{\int\Phi d\bx \int_{{\mathbb T} } \lan f\ran(w) \Psi(w-\te)dw},\\
 L[\lan f \ran](\bx,\theta)=&\int_{{\mathbb T}}\Psi(w-\te)\int_{{\mathbb T}^2} \Phi(\bx-\by)\lan f\ran(w) d\by dw\\
 =&\sima{\int \Phi d\bx \int_{{\mathbb T}}\Psi(w-\te)\lan f\ran(w)dw}.\label{exp_L<f>}
\end{align}
Hence,
\begin{align}\label{rel}
\lan L[f]\ran=L[\lan f\ran], \quad L[f]_\nq=L[f]-\lan L[f]\ran=L[f]- L[\lan f\ran]=L[f-\lan f\ran]=L[f_\nq].
\end{align}
Moreover, since $\Phi\in C^\infty({\mathbb T}^2),\ \Psi\in C^\infty({\mathbb T})$, we have that 
\begin{align}\label{ker_est}
\|L[f]\|_{L^\infty}=&\max_{(\bx,\te)\in {\mathbb T}^2\times{\mathbb T}}\lf|\int_{{\mathbb T}^2\times{\mathbb T}} \Phi(\bx-\by)\Psi(w-\te)f(\by,w) d\by dw\rg|\\
\leq &C\min\{ \|\Phi\Psi\|_{L_{\bx,\te}^\infty}\|f\|_{L_{\bx,\te}^1}, \|\Phi\Psi\|_{L^2_{\bx,\te}}\|f\|_{L_{\bx,\te}^2}\}. 
\end{align}
Now we estimate the terms in \eqref{DTD}. To begin with, we consider the the $T_{\nq0}$ term, and estimate it with the bound \eqref{ker_est} and the zero mode bound \eqref{zm_hyp},
\begin{align}\label{Tn0}
|T_{\nq 0}|\leq &\frac{1}{4}\nu\|\pa_\theta (f_\nq-\eta_\nq)\|_{L^2}^2+\frac{\kappa^2}{\nu}\|L[f_\nq]\|_{L^\infty}^2\|\lan f\ran\|_{L^2}^2\\
\leq&\frac{1}{4}\nu\|\pa_\theta (f_\nq-\eta_\nq)\|_{L^2}^2+C\frac{\kappa^2}{\nu}\|\Phi\Psi\|_{L^2}^2\|f_\nq \|_{L^2}^2\mathfrak{B}\lf(1+\frac{\kappa}{\nu}\rg).
\end{align}
Next, we estimate the $T_{0\nq}$ term in \eqref{DTD}. To this end, we invoke the bound \eqref{ker_est}, and the conservation of mass \eqref{cons_mass} to obtain
\begin{align*}
|T_{0\nq}|\leq& \frac{1}{4}\nu\|\pa_\theta(f_\nq-\eta_\nq)\|_{L^2}^2+\frac{\kappa^2}{\nu}\|L[\lan f\ran]\|_{L^\infty}^2\|f_\nq\|_{L^2}^2\\
\leq& \frac{1}{4}\nu\|\pa_\theta(f_\nq-\eta_\nq)\|_{L^2}^2+C\frac{\kappa^2}{\nu}\|\Phi\Psi\|_{L^\infty}^2\|\lan f\ran\|_{L^1}^2\|f_\nq\|_{L^2}^2\\
\leq & \frac{1}{4}\nu\|\pa_\theta(f_\nq-\eta_\nq)\|_{L^2}^2+C\frac{\kappa^2}{\nu}\|\Phi\Psi\|_{L^\infty}^2\|f_0\|_{L^1}^2\|f_\nq\|_{L^2}^2.
\end{align*} 
Finally, combining the estimate \eqref{ker_est} and the bound $\|f_\nq\|_{L^1}\leq 2\|f\|_{L^1}=2$ \eqref{Mass}, the $T_{\nq\nq}$ term can be estimated as follows
\begin{align*}
|T_{\nq\nq}|\leq& \frac{1}{4}\nu\|\pa_\theta(f_\nq-\eta_\nq)\|_{L^2}^2+\frac{\kappa^2}{\nu}\|L[ f_\nq]\|_{L^\infty}^2\|f_\nq\|_{L^2}^2\\
\leq &\frac{1}{4}\nu\|\pa_\theta(f_\nq-\eta_\nq)\|_{L^2}^2+C\frac{\kappa^2}{\nu}\|\Phi\Psi\|_{L^\infty}^2\|f_\nq\|_{L^1}^2\|f_\nq\|_{L^2}^2\\
\leq& \frac{1}{4}\nu\|\pa_\theta(f_\nq-\eta_\nq)\|_{L^2}^2+\frac{\kappa^2}{\nu}C(\|\Phi\Psi\|_{L^\infty})\|f_\nq\|_{L^2}^2.
\end{align*} 
Combining the estimates above, we have that
\begin{align}
\frac{d}{dt}\|f_\nq-\eta_\nq\|_{L^2}^2\leq -\frac{1}{2}\nu\|\pa_\theta(f_\nq-\eta_\nq)\|_{L^2}^2+C(\|\Phi\Psi\|_{L^2\cap L^\infty})\frac{\kappa^2}{\nu}\lf(\frac{\kappa}{\nu}\mathfrak B+1\rg)\|f_\nq\|_{L^2}^2.
\end{align}
Through similar estimates on the equation \eqref{DTR} (one can replace $\pa_\te (f_{\nq}-\eta_\nq)$ by $\pa_\te f_\nq$ and run a similar argument), we have 
\begin{align}
\frac{d}{dt}\|f_\nq\|_{L^2}^2\leq C(\|\Phi\Psi\|_{L^2\cap L^\infty})\frac{\kappa^2}{\nu}\lf(\frac{\kappa}{\nu}\mathfrak B+1\rg)\|f_\nq\|_{L^2}^2 .
\end{align}
As a consequence of the above differential inequalities, we have that by the Gr\"onwal inequality, for all $t\in[T_i,T_{i+1}=T_i+\delta^{-1}\nu^{-1/2}]$,
\begin{align}
\label{diff_f_eta}\|f_{\nq}\|_{L^2}^2(t)\leq& \|f_{\nq}(T_i)\|_{L^2}^2\exp\lf\{   \frac{\kappa^2}{\delta\nu^{3/2}}C(\Phi,\Psi)\lf(\mf B\frac{\kappa}{\nu}+1\rg) \rg\};\\
\|f_\nq-\eta_\nq\|_{L^2}^2(t)\leq& C(\Phi,\Psi)\frac{\kappa^2}{\delta\nu^{3/2}}\lf(\mf B\frac{\kappa}{\nu}+1\rg)\|f_{\nq}(T_i)\|_{L^2}^2\exp\lf\{   \frac{\kappa^2}{\delta\nu^{3/2}}C(\Phi,\Psi)\lf(\mf B\frac{\kappa}{\nu}+1\rg) \rg\}.
\end{align}
Note that we are focusing on an interval of length $\delta^{-1}\nu^{-1/2}$, and are assuming that $\kappa\leq \nu^{5/6+\wt\gamma}\leq 1,\, \wt\gamma>0$ \eqref{nl_ED}. Hence, we can choose $\nu\leq \nu_0(\delta,\mathfrak{B}, \|\Phi\Psi\|_{L^2\cap L^\infty}) )$ small enough such that 
\begin{align}\label{chc_nu_1}
\|f_{\nq}(T_i+\tau)\|_{L^2}^2\leq& 2\|f_{\nq}(T_i)\|_{L^2}^2,\\
\|f_\nq-\eta_\nq\|_{L^2}(T_i+\tau)\leq& \frac{1}{4}\|f_{\nq}(T_i)\|_{L^2},\quad\forall \tau\in[0,\delta^{-1}\nu^{-1/2}].
\end{align}
Thanks to the  choice of $\delta$ \eqref{chc_delta}, we have that 
\begin{align*}
\|\eta_\nq(T_i+\delta^{-1}\nu^{-1/2})\|_{L^2}\leq \frac{1}{32}\|f_{\nq}(T_i)\|_{L^2}.
\end{align*}
Combining the estimate above, we have that 
\begin{align}
\|f_\nq(T_{i+1})\|_{L^2}\leq 
\|f_\nq(T_{i+1})-\eta_\nq(T_{i+1})\|_{L^2}+\|\eta_\nq(T_{i+1})\|_{L^2}\leq \frac{1}{e}\|f_\nq(T_i)\|_{L^2}.\label{disc_ED}
\end{align}
The remaining argument to derive the enhanced dissipation estimate \eqref{nzm_con} is standard. If $t\in \delta^{-1}\nu^{-1/2}\mathbb{N}$, then we have that by \eqref{disc_ED},
\begin{align}
\|f_\nq(t)\|_{L^2}\leq \|f_\nq(0)\|_{L^2}\exp\lf\{- \frac{ t}{\delta^{-1}\nu^{-1/2}}\rg\}=\|f_{0;\nq}\|_{L^2}\exp\lf\{- {\delta \nu^{ 1/2}}{ t}\rg\}.
\end{align} On the other hand, if $t\notin\delta^{-1}\nu^{-1/2}\mathbb{N}$, we choose the largest integer $N$ such that $\delta^{-1}\nu^{-1/2}N\leq t.$ Hence, we have the relation $t\in\delta^{-1}\nu^{-1/2}[N,N+1]$. Then, we combine the estimates \eqref{chc_nu_1} and \eqref{disc_ED} to obtain that
\begin{align}
\|f_\nq(t)\|_{L^2}\leq& 2\|f_\nq (T_N)\|_{L^2}\leq 2e\|f_{0;\nq}\|_{L^2}e^{-(N+1)}\leq 2e\|f_{0;\nq}\|_{L^2}e^{ -\delta \nu^{1/2}t },\quad \forall  t \in[0,\infty).
\end{align} 
Hence we have proven the nonlinear enhanced dissipation \eqref{nzm_con}.  

\noindent
{\bf Step \# 3: The $\bx$-average estimate.}
Now we consider the zero-mode estimate $\lan f\ran.$ The equation can be rephrased as follows:
\begin{align*}
\pa_t \lan f\ran +\kappa\pa_\theta( L[\lan f\ran] \lan f\ran) + \kappa\pa_\theta \lan L[ f_\nq] f_\nq\ran =\nu\pa_{\theta}^2 \lan f\ran. 
\end{align*}
We apply the $L^2$-energy estimate to obtain that 
\begin{align*}
\frac{1}{2}\frac{d}{dt}\|\lan f\ran\|_{L^2}^2=&-\nu\|\pa_\theta\lan f\ran\|_{L^2}^2+\frac{\kappa^2}{\nu}\|L[f_\nq]\|_{L^\infty}^2\|f_\nq\|_{L^2}^2+\frac{\kappa^2}{\nu}\|L[\lan f\ran]\|_{L^\infty}^2\|\lan f\ran\|_{L^2}^2.
\end{align*}
Thanks to the enhanced dissipation estimate \eqref{nzm_hyp} and the $L$-bound \eqref{ker_est}, we have that 
\begin{align*}
\frac{\kappa^2}{\nu}\|L[f_\nq]\|_{L^\infty}^2\|f_\nq\|_{L^2}^2\leq C(\|\Phi\Psi\|_{L^\infty})\frac{\kappa^2}{\nu}\|f_\nq(0)\|_{L^2}^2\exp\{-2\delta\nu^{1/2}t\}.  
\end{align*} 
We further observe that the time integral of the above quantity is bounded as follows
\begin{align}
G(t):=\int_0^t C\frac{\kappa^2}{\nu}\|f_\nq(0)\|_{L^2}^2\exp\lf\{-2\delta\nu^{1/2}\tau\rg \} d\tau\leq \frac{C\kappa^2}{\delta\nu^{3/2}}\|f_\nq(0)\|_{L^2}^2\leq \frac{\kappa }{\nu }(C\delta^{-1}\nu^{1/3+\wt{\gamma}}\|f_\nq(0)\|_{L^2}^2).
\end{align}
Here we have invoked the choice of $\kappa$, i.e., $\kappa\leq \nu^{5/6+\wt{\gamma}},\ \wt{\gamma}>0$. 
Hence we can choose $\nu$ small enough such that\begin{align}\label{chc_nu1}
C\delta^{-1}\nu^{\wt{\gamma} +1/3}\|f_\nq(0)\|_{L^2}^2\leq 1\quad\Rightarrow\quad G(t)\leq \kappa\nu^{-1}.
\end{align}
Now we have that 
\begin{align}
\frac{d}{dt}(\|\lan f\ran\|_{L^2}^2-G(t))\leq &-\nu\|\pa_\theta\lan f\ran\|_{L^2}^2+\frac{\kappa^2}{\nu}\|L[\lan f\ran]\|_{L^\infty}^2\|\lan f\ran\|_{L^2}^2\\
\leq& -\nu\|\pa_\theta\lan f\ran\|_{L^2}^2+\frac{\kappa^2}{\nu}C(\|\Phi\Psi\|_{L^\infty})\|\lan f\ran\|_{L^2}^2.
\end{align}
Now we recall the Nash inequality on the torus ($\|f\|_{L^1}\neq 0$)
\begin{align}\label{Nash}
&\|\lan f \ran\|_{L^2}\leq C\|\lan f\ran\|_{L^1}^{2/3}\|\pa_\theta \lan f\ran\|_{L^2}^{1/3}+C\|\lan f\ran\|_{L^1} \ \Rightarrow \
\|\lan f \ran\|_{L^2}^6\leq  C\|\lan f\ran\|_{L^1}^{ 4}\|\pa_\theta \lan f\ran\|_{L^2}^{2}+C\|\lan f\ran\|_{L^1}^6\\
&\Rightarrow
-\|\pa_\theta \lan f\ran\|_{L^2}^2\leq -\frac{\|\lan f \ran\|_{L^2}^6}{C\|\lan f\ran\|_{L^1}^4}+C\|\lan f\ran\|_{L^1}^2.
\end{align}
Hence, there is the following relation
\begin{align*}
\frac{d}{dt}(\|\lan f\ran\|_{L^2}^2-G(t) )\leq -\nu\frac{\|\lan f\ran\|_{L^2}^6}{\|\lan f\ran\|_{L^1}^4}+\nu\|\lan f\ran\|_{L^1}^2+\frac{C\kappa^2}{\nu}\|\lan f\ran\|_{L^2}^2.
\end{align*} 
\sima{Since we only care about the upper bound of the $\|\lan f\ran\|_{L^2}^2$, we focus on the time intervals on which $\|\lan f\ran(t)\|_{L^2_\te}^2\geq 2\|G(\cdot)\|_{L^\infty([0,T_\ast))}$. Let $\mf I=(a,b)\cap[0,T_\ast)$ be an arbitrary (non-extendable) time interval such that $\|\lan f\ran(t)\|_{L^2_\te}^2> 2\|G(t)\|_{L^\infty([0,T_\ast))}$. The interval is non-extendable in the sense that at the left end point $t=a$, either $\|\lan f\ran(a)\|_{L^2_\te}=2\|G(\cdot)\|_{L_t^\infty([0,T_\ast))}$ or $a=0$, and at the right end point $t=b$, either $\|\lan f\ran(b)\|_{L^2_\te}=2\|G (\cdot)\|_{L_t^\infty([0,T_\ast))}$ or $b=T_\ast$. On the interval $\mf I$, we consider the quantity $\mathcal{Z}(t):=\|\lan f(t)\ran\|_{L^2}^2-G(t)$ and rewrite the above relation in the following fashion using the fact that $\|\lan f\ran\|_{L_\te^1}=\frac{1}{2\pi}\|f_0\|_{L^1_{\bx,\te}}=\frac{1}{2\pi}$:
\begin{align*}
&\frac{d}{dt}\mathcal{Z}\leq -\nu\lf((2\pi)^4 (\|\lan f\ran\|_{L^2}^2-G+G)^3  -1-\frac{C\kappa^2}{\nu^2}\lf(\|\lan f\ran\|_{L^2}^2-G\rg)-\frac{C\kappa^2}{\nu^2}G\rg)\\
&\leq -\nu\Bigl(\mathcal{Z}^3-C\kappa^2\nu^{-2}\mathcal{Z}-C G^{3}-1-C\kappa^2\nu^{-2}G\Bigr).
\end{align*} 
We observe that for $\mathcal{Z}=\|\lan f\ran\|_{L^2}^2-G$ large, the cubic term dominates the others. Hence, one obtain the following bound
\begin{align}
&\|\lan f\ran(t)\|_{L^2}^2=\mathcal{Z}(t)+G(t)\\
&\leq \|\lan f\ran(a)\|_{L^2}^2+1+C\kappa\nu^{-1}+C\|G (\cdot)\|_{L_t^\infty([0,T_\ast))}+C\kappa^{2/3}\nu^{-2/3}\|G (\cdot)\|_{L_t^\infty([0,T_\ast))}^{1/3}\\
 &\leq  \|\lan f_0\ran\|_{L^2}^2+1+C\kappa\nu^{-1},\quad \forall t\in \mf I .  
\end{align}
For the time where $\|\lan f\ran(t)\|_{L^2_\te}^2\leq 2\|G\|_{L_t^\infty([0,T_\ast))}$, the above estimate also holds. As a result, 
}
\begin{align}
\|\lan f\ran\|_{L^2}^2\leq 1+\frac{C\kappa }{\nu }+\|\lan f_0\ran\|_{L^2}^2=(2+C+\|\lan f_0\ran\|_{L^2}^2)\lf(1+\frac{\kappa }{\nu }\rg).\label{fz_est}
\end{align}
Now choose 
\begin{align}\label{chc_B}
\mathfrak{B}=2+C+\|\lan f_0\ran\|_{L^2}^2, 
\end{align}
and we have  
\eqref{zm_con}.

\ifx
\noindent
\myb{{\bf   Step \# 4: Long time estimate.}
Since $f_\nq$ undergoes enhanced dissipation, there exists $T_\ep=T(\ep,\nu,\|f_{0;\nq}\|_{L^2})$ such that
\begin{align*}
\|f_\nq(T_\ep)\|_{L^2}\leq \nu^{1/2}\ep,\quad \forall t\geq T_\ep.
\end{align*}
Now we consider the difference between the nonlinear system \eqref{eq_average} and the effective equation \eqref{effctv_dym} initiated from $\lan f\ran(T_\ep)$, 
\begin{align}
\frac{1}{2}\frac{d}{dt}\| \fz-g \|_{L^2}^2 =& \int \pa_\theta(\fz-g)\lf[\fz L\fz -gLg - \nu\pa_{\theta}(\fz-g)\rg] d\theta\\
&+ \int \pa_\theta(\fz-g)\kappa f_\nq Lf_\nq d\theta\\
\leq&- \frac{\nu}{2}\|\pa_\theta (\fz-g)\|_{L^2}^2+ C\frac{\kappa^2}{\nu} \|L[\fz-g]\|_{L^\infty}^2\|f\|_{L^2}^2\\
&+C\frac{\kappa^2}{\nu}\|\fz-g\|_{L^2}^2\|L[g]\|_{L^\infty}^2+C\|f_\nq\|_{L^2}^2\|Lf_\nq\|_{L^\infty}^2\\
\leq & - \frac{\nu}{2}\|\pa_\theta (\fz-g)\|_{L^2}^2+ C\frac{\kappa^2}{\nu} \| \fz-g \|_{L^2}^2\mf B\frac{\kappa}{\nu}+C\frac{\kappa^2}{\nu}\|\fz-g\|_{L^2}^2+C\|f_\nq\|_{L^2}^2.
\end{align}
Here the implicit constant depends on $\|\Psi\Phi\|_{L^2\cap L^\infty}$. 
Hence, by the Gr\"onwall inequality, we have that  
\begin{align}
\|\fz-g\|_{L^2}^2(t)\leq \int_{T_\ep}^t \exp\lf\{C\mf B\frac{\kappa^3}{\nu^2}(t-s)\rg\}\|f_{\nq}(T_\ep)\|_{L^2}^2\exp\lf\{-\delta\nu^{1/2}(s-T_\ep)\rg\}ds.
\end{align}
Now we recall that $\kappa\leq \nu^{5/6+\wt{\gamma}},\ \wt{\gamma}>0$ and $\nu$ can be chosen to be small. Hence we have that
 \begin{align}
 \|\fz-g\|_{L^2}^2(t)\leq \frac{C\|f_{\nq}(T_\ep)\|_{L^2}^2}{\delta\nu^{1/2}}\lf(\exp\{\nu^{1/2}(t-T_\ep)\}-\exp\{-\delta\nu^{1/2}(t-T_\ep)\}\rg)\leq \nu,
 \end{align}
 as long as $\nu$ is small enough, and $t-T_\ep\leq \nu^{-1/2}.$}
\fi
\end{proof}
 
\begin{proof}[Proof of Theorem \ref{thm_1} {\bf b)}]
Now we focus on the initial time layer $t\in[0,\delta^{-1}\nu^{-1/2} ]$ to derive the nonlinear mixing estimate \eqref{nonlin_ID}. Consider the solution $f_\nq$ of the actual equation \eqref{eq:bsc_1} and the passive scalar solution $\eta_\nq$ initiated from the same initial data. Thanks to the argument in the previous proof (e.g. \eqref{diff_f_eta}) and the definition of $\mf B$ \eqref{chc_B}, we have that   
\begin{align}
\|f_\nq-\eta_\nq\|_{L^2}\leq C(\Phi,\Psi,\delta^{-1},\|\lan f_0\ran\|_{L^2})\lf(\frac{\kappa^{3/2} }{\delta^{1/2} \nu^{5/4}}+\frac{\kappa }{\delta^{1/2} \nu^{3/4}}\rg)\|f_\nq(0)\|_{L^2},\quad t\in[0,\delta^{-1}\nu^{-1/2}].
\end{align}
\sima{Combining the linear mixing estimate of the passive scalar solution $\eta_\nq$ \eqref{linear_ID}, the parameter constraint $\kappa\leq C_\dagger\nu$, and the deviation estimate
\begin{align}
    \|f_\nq-\eta_\nq\|_{\dot H^{-1}}\leq \|f_\nq-\eta_\nq\|_{L^2},
\end{align}
the following relation holds for $t\leq \delta^{-1}\nu^{-1/2} $:}
\begin{align}
    &\|f_\nq(t)\|_{\dot H^{-1}}\leq \|f_\nq-\eta_\nq\|_{\dot H^{-1}}(t)+\|\eta_\nq(t)\|_{\dot H^{-1}}\leq\\
    &  C(\Psi, \Phi,\delta^{-1}, C_\dagger,\|\lan f_0\ran\|_{L^2})\lf( \nu^{1/4} \|f_\nq(0)\|_{L^2}+\min\lf\{\frac{\nu^{1/4}}{\min\{1,\nu^{1/4}t^{1/2}\}},e^{-\delta_0\nu^{1/2}t}\rg\}\|f_\nq(0)\|_{H^1}\rg).
\end{align}Hence, we obtain that for all 
 $t\leq \delta^{-1}\nu^{-1/2}$, 
\begin{align}
    \|f_\nq(t)\|_{\dot H^{-1}}\leq C(\Psi, \Phi,\delta, C_\dagger,\|\lan f_0\ran\|_{L^2})\frac{1}{t^{1/2}} \|f_\nq(0)\|_{H^1}.
\end{align} \end{proof}

\begin{remark}
    As a corollary of the above mixing estimate, we have the following estimate about the strength of the interaction for all $t\leq \delta^{-1}\nu^{-1/2}$:
\begin{align}
    &\|L[f_\nq]\|_{L^\infty}=\sup_{\bx,\te}\lf|L[f_\nq](t,\bx,\te)\rg|=\lf|\iint \Phi(\bx-\by)\Psi(\te-w) f_\nq(\by, w) d\by dw \rg|\\ & \leq \|f_\nq\|_{\dot H^{-1}}\|\Phi\Psi-\overline{\Phi\Psi}\|_{H^1}\leq C\frac{1}{ t^{1/2} }\|f_\nq(0)\|_{H^1}.
\end{align}
Here, the last constant $C=C(\Psi, \Phi,\delta^{-1}, C_\dagger,\|\lan f_0\ran\|_{L^2})$.
\end{remark}



\section{Analysis of the Effective Dynamics}\label{sec:eff_sym}
In this section, we prove Theorem \ref{thm_2} and \ref{thm_3}. 

\begin{proof}[Proof of Theorem \ref{thm_2} ] Thanks to the relation \eqref{U} and the constraint \eqref{phi_sym}, the equation \eqref{effctv_dym} can be reformulated as 
\begin{align}
\pa_t g-\kappa\pa_\theta (g (\pa_\theta \mathbb{U}*g))=\nu\pa_{\theta}^2 g.
\end{align}

\noindent
{\bf Step \# 1: Proof of statement a).} 
We compute the time derivative of $F[g]$
\begin{align}
\frac{d}{dt}F[g]=&\nu\int g_t \log g d\theta + \kappa \iint \mathbb{U}(\theta-w)g_t (\theta)g(w)d\theta dw\\
=&\int \pa_\theta(\nu g\pa_\theta\log g +\kappa g\pa_\theta \mathbb{U}*g ) (\nu\log g +\kappa \mathbb{U}*g)d\theta  \\
=&-\int  g(\nu\pa_\theta\log g +\kappa \pa_\theta \mathbb{U}*g ) (\nu\pa_\theta\log g +\kappa \pa_\theta \mathbb{U}*g)d\theta\\
=&-\int g|\nu\pa_\theta\log g +\kappa \pa_\theta \mathbb{U}*g |^2d\theta. 
\end{align} 
This is \eqref{Fisher}. 
\myh{$\frac{d}{dt}\mathcal{D}[g]?$ Maybe there are no clean way to show that it is actually decreasing. Moreover, in the full problem, there will be error terms. }


\noindent
{\bf Step \# 2: Proof of statement b).} 
We can do a simple linear analysis to identify the instance where the phase transition happens. First of all, we observe that $\overline{g}=\frac{1}{2\pi}$ is a solution and linearize the problem around this stationary solution: ($g=\wt g+\overline{g}$)
\begin{align}
\pa_t \wt g - \frac{\kappa}{2\pi}  \pa_\theta (\pa_\theta \mathbb{U}* \wt g) =\nu\pa_{\theta}^2\wt g.
\end{align}
Now we take the Fourier transform to obtain
\begin{align}
\pa_t \wt g_\ell+ \frac{\kappa}{2\pi}    \wh {\mathbb{U}}(\ell)|\ell|^2\wt g_\ell=&-\nu|\ell|^2\wt g_\ell.
\end{align}
If 
\begin{align}
\frac{\kappa}{2\pi}  | \wh {\mathbb{U}}(\ell)|< \nu,\quad \forall \ell\neq0,
\end{align}
then the constant state is linearly stable.  We would like to highlight that, in the $\Psi_0(\cdot)=\sin(\cdot)$ case, the relation is simple and has the form 
\begin{align}\label{lin_r}
\frac{\kappa}{2}<\nu. 
\end{align} If there exists $\ell\in \mathbb{Z}\backslash\{0\}$ such that
\begin{align}
-\frac{\kappa}{2\pi}\Re\wh{\mathbb{U}}(\ell)-\nu>0,
\end{align} then the $\ell$ is an unstable growing mode. 

This concludes the proof of Theorem \ref{thm_2}.

\ifx 
We assume that 
\begin{align}
\frac{\kappa}{2\pi}\int \Phi d\bx<\nu.
\end{align}
Then the nonzero mode contributions are exponentially decreasing in time. The system will eventually converge to the stationary state $\frac{M}{2\pi}$. \fi

\end{proof}




Our proof of the first part of  Theorem \ref{thm_3} is in the same spirit as the proof of Proposition 3.2 in \cite{FrouvelleLiu12} but with major adjustments to keep track of the dynamics of the remainder $f_\nq. $
\begin{proof}[Proof of Theorem \ref{thm_3}, {\bf Part a)}]
Since the constant state $g=\text{const}.$ is always a solution to the stationary equation \begin{align}\mathcal{D}[g]=\int g|\nu\pa_\theta\log g+\kappa \mathbb{U}*\pa_\theta g|^2d\theta=0,
\end{align} the set $ {\mathcal{S}_{\infty}}\neq \emptyset.$  We recall the equation \eqref{eq_average}, and compute the time evolution of the free energy $F[\fz]$ \eqref{Free_energy},
 \begin{align}\label{DTL} \\
&\hspace{-0.25cm}\frac{d}{dt}F[\fz]\\
=&\nu\int \pa_t\fz \log \fz d\theta + \kappa \iint \mathbb{U}(\theta-w)\pa_t\fz (\theta)\fz(w)d\theta dw\\
=&\int \pa_\theta(\nu \fz\pa_\theta\log \fz +\kappa \fz\pa_\theta \mathbb{U}*\fz -\kappa \lan L[f_\nq]f_\nq\ran) (\nu\log \fz +\kappa \mathbb{U}*\fz)d\theta  \\
=&-\int \fz|\nu\pa_\theta\log \fz +\kappa \pa_\theta \mathbb{U}*\fz |^2+\kappa\int \lan L[f_\nq]f_\nq\ran\lf(\nu\frac{\pa_\theta\lan f\rangle}{\lan f\ran}+\kappa \pa_\theta\mathbb{U}*\lan f\ran\rg)\\
=&\sima{-\int \fz|\nu\pa_\theta\log \fz +\kappa \pa_\theta \mathbb{U}*\fz |^2+\kappa\nu\int\lf \lan\frac{ L[f_\nq]f_\nq}{\lan f\ran}\rg\ran\pa_\theta\lan f\rangle+\kappa^2\int \lan L[f_\nq]f_\nq\ran \pa_\theta\mathbb{U}*\lan f\ran\n} \\
=:&-\mathcal{D}[\fz]+\mathbb{T}_1+\mathbb{T}_2.
 \end{align}
First, we estimate the $\mathbb{T}_1$-term. We note that $\lan f\ran(\te)+f_\nq(\bx,\theta) =f(\bx,\theta)\geq 0$, so the following estimate holds
\begin{align*}
\lan f\ran(\te)\geq (f_\nq)^-(\bx,\te)\;\Longrightarrow \; 2\pi\lan f\ran(\theta) \geq \frac{1}{2}\|f_\nq(\cdot,\te)\|_{L^1_\bx}\;\Longrightarrow\; \lf\|\lf\lan L[f_\nq]\frac{f_\nq}{\lan f \ran}\rg\ran\rg\|_{L_\theta^\infty}\leq 4\pi\|L[f_\nq]\|_{L_{\bx,\te}^\infty}.
\end{align*}
Here, we use the fact that $f_0>0$ and the $\min_{\bx,\theta} f(t,\bx,\te)>0$ for any finite $t.$ \sima{By the parameter constraint $\kappa\leq\nu^{5/6}\leq 1$, the $L[f_\nq]$ estimate \eqref{ker_est},  the enhanced dissipation \eqref{nl_ED}, and the higher regularity estimate  \eqref{Hn_bnd}, we obtain the following estimate,
\begin{align*}
&|\mathbb{T}_{1}|\leq C\kappa\nu\|\pa_\theta f\|_{L_{\bx,\te}^2}\|L[f_\nq]\|_{L_{\bx,\te}^\infty}&&\qquad (\|\pa_\te \fz\|_{L_{\te}^2}\leq C\|\pa_\theta f\|_{L_{\bx,\te}^2}.)\\ 
& \leq C(\Phi,\Psi)\kappa\nu\frac{\max\{1,\|f_0\|_{H_{\bx,\theta}^1}\}}{\nu^{3/4}}\exp\{C\nu^{2/3}t\}\|f_\nq(t)\|_{L_{\bx,\te}^2} &&\qquad(\text{Apply }\eqref{Hn_bnd} \text{ and }\eqref{ker_est}.)\\
&\leq C\kappa\nu^{1/4}\max\{1,\|f_0\|_{H_{\bx,\theta}^1}\}\|f_{0;\nq}\|_{L_{\bx,\te}^2}\exp\{C\nu^{2/3}t-\delta \nu^{1/2}t\}&&\qquad(\text{Apply }\eqref{nl_ED}.)\\
& \leq C{\nu^{13/12}}(1+\|f_0\|_{H_{\bx,\te}^1}) \|f_{0;\nq}\|_{L_{\bx,\te}^2}\exp\{C\nu^{2/3}t-\delta \nu^{1/2}t\}. &&\qquad(\text{Apply }\kappa\leq  \nu^{5/6}.)
\end{align*}
Next, we estimate the $\mathbb{T}_2$-term using the enhanced dissipation \eqref{nl_ED}, the $\lan f\ran$-estimate \eqref{nl_z_mod}, and the $L[f_\nq]$ estimate \eqref{ker_est} as follows:
 \begin{align}
&| \mathbb{T}_2| \leq C\kappa^2\|L[f_\nq]\|_{L^\infty_{\bx,\te}}\|f_\nq\|_{L_{\bx,\te}^2}\|\pa_\te \mathbb{U}\|_{L_{\bx,\te}^1}\|\lan f\ran\|_{L_{\bx,\te}^2}&&(\text{H\"older and Young inequalities.})\\
&\leq C\kappa^2\|f_\nq\|_{L_{\bx,\te}^1}\; \|f_{0;\nq}\|_{L_{\bx,\te}^2}\exp\{-\delta\nu^{1/2}t\}\|\lan f\ran\|_{L_{\bx,\te}^2} &&(\text{Apply }\eqref{nl_ED},  \eqref{ker_est}.)\\
&\leq C\lf(\kappa^2+\frac{\kappa^{5/2}}{\nu^{1/2}}\rg)(1+\|f_{0}\|_{L_{\bx,\te}^2})\|f_{0}\|_{L_{\bx,\te}^2}\exp\{-\delta\nu^{1/2}t\}.&&(\text{Apply \eqref{nl_z_mod} and }\|f_\nq\|_{L_{\bx,\te}^1}\leq2.)
 \end{align}}
Hence,  we see that the $|\mathbb{T}_{1}|+|\mathbb T_{2}|\rightarrow0$ as $t\rightarrow\infty$. Moreover,
\begin{align}
\lim_{T\rightarrow \infty}\int_T^\infty |\mathbb{T}_1|+|\mathbb{T}_2|dt=0.
\end{align}
\ifx
\myr{Hopefully, we can use the argument in \cite{FrouvelleLiu12} to finish the proof. 
 Formally, we have that
 \begin{align*}
 \lim_{T\rightarrow\infty}\int_T^\infty\int \fz|\nu\pa_\te \log \fz+\kappa \Psi*\fz|^2 d\te dt=\lim_{T\rightarrow\infty}\int_{T}^\infty \mathcal{D}[\fz]dt=0.
 \end{align*}
 Moreover, the $\mathcal{D}[\fz]=0$ only if $\fz\in \mathcal{E}_\infty$. Hence the ``distance'' $dist(\fz,\mathcal{E}_\infty)\sim\mathcal D[\fz]\rightarrow 0. $}
\fi  As a consequence of \eqref{f_nq_Hn}, \eqref{fz_Hn} and the equation \eqref{eq_average}, we have that $\|\pa_t \lan f\ran\|_{H^n}$ is uniformly  bounded in time. Since all the qualitative requirements in the proof of Proposition 3.2 of \cite{FrouvelleLiu12} are fulfilled, one can follow their argument to get the result directly. We will summarize the main argument in \cite{FrouvelleLiu12}, highlight the main adjustments, and omit further details for the sake of brevity. 
 

\myh{\bf BLUE: The argument in the paper with our modifications. This part should NOT appear in the actual paper unless significant rewriting happens.}

First of all, we recall that the initial data $f_0$ is $C^\infty$, and all $H^M$-norms are propagated \eqref{f_nq_Hn}, \eqref{fz_Hn} ($M\in \mathbb{N}$). Following the paper, we can choose a sequence of distinct times $\{t_n\}_{n=1}^\infty$ such that $\lim_{n\rightarrow \infty}t_n={\infty}$ and $\lim_{n\rightarrow\infty} f(t_n)=f_\infty$ in the $L^2$ sense. Since the sequence $\{f(t_n)\}_{n=1}^\infty$ is also uniformly bounded in arbitrary $H^M$-space, the Gagliardo-Nirenberg interpolation yields that the limiting function $f_\infty$ is in arbitrary $H^M$-space, and hence it is a $C^\infty$ function. 

\myh{\cite{FrouvelleLiu12}: Let $(t_n)$ be an unbounded increasing sequence of time, and suppose that $\{\lan f\ran(t_n)\}_{n=1}^\infty$ converges in $H^s({\mathbb T})$ to $f_{\infty}$ for some $s \in \mathbb{R}$. We first remark that $\lan f\ran(t_n)$ is uniformly bounded in $H^{s+2p}({\mathbb T})$ (by \eqref{fz_Hn}), and then by a simple interpolation estimate, we get that $\|\lan f\ran(t_n) - \lan f\ran(t_m)\|_{\dot H^{s+p}}^2 \leq \|\lan f\ran(t_n) - \lan f\ran(t_m)\|_{\dot H^s} \|\lan f\ran(t_n) - \lan f\ran(t_m)\|_{\dot H^{s+2p}}$, and $\{\lan f\ran(t_n)\}_{n=1}^\infty$ also converges in $H^{s+p}({\mathbb T})$. So $f_{\infty}$ is in any $H^{s}({\mathbb T})$.}

Next, we will show that $\mathcal{D}[f_{\infty}] = 0$. We summarize the argument in \cite{FrouvelleLiu12} as follows. Suppose this is not the case, i.e., $\mathcal{D}[f_{\infty}] >0.$ Thanks to a detailed analysis of the Fisher information $\mathcal{D}$, one is able to show that if $\mathcal{D}[f_\infty]>0$, then there exists a threshold $\delta_Z$ and a constant $Z>0$ such that if $\|\lan f\ran-f_\infty\|_{H^M}\leq \delta_Z$, then $\mathcal{D}[\lan f\ran]\geq Z>0.$

\myh{\cite{FrouvelleLiu12}: Supposing this is not the case, we write (seems that we are opening up the Fisher information)
\begin{equation}
\mathcal{D}[\lan f\ran] =    \nu^2\int_{\mathbb T} \frac{|\pa_\theta \lan f\ran |^2}{\lan f\ran} + 2\nu\kappa\int \pa_\theta \lan f\ran \Psi* \lan f\ran d\theta +\kappa^2\int \lan f\ran|\Psi*\lan f\ran|^2 d\theta. 
\end{equation}
Now we take $s$ (=1) sufficiently large such that $H^s({\mathbb T}) \subset L^{\infty}({\mathbb T}) \cap H^1({\mathbb T})$. If $f_\infty$ is positive, then $\mathcal D$, as a function from the nonnegative elements of $H^s({\mathbb T})$ to $[0, +\infty)$, is continuous at the point $f_{\infty}$. In particular since $\mathcal{D}[f_{\infty}] > 0$, there exist $\delta > 0$ and $M > 0$ such that if $\|\lan f\ran - f_{\infty}\|_{H^s} \leq \delta$, then we have $D[\lan f\ran] > M$. We want to show the same result in the case where $f_{\infty}$ is only nonnegative. We define
\begin{equation}
\mathcal{D}_{\epsilon}[\lan f\ran] = \nu^2   \int \frac{|\pa_\theta \lan f\ran|^2}{\lan f\ran + \epsilon}d\theta  + 2\nu\kappa\int \pa_\theta \lan f\ran \Psi* \lan f\ran d\theta +\kappa^2\int \lan f\ran|\Psi*\lan f\ran|^2 d\theta.
\end{equation}
We have that by monotone convergence, the $\mathcal{D}_{\epsilon}[f_{\infty}]$ converges to $\mathcal{D}[f_{\infty}]$ as $\epsilon \to 0$. So there exists $\epsilon > 0$ such that $\mathcal{D}_{\epsilon}(f_{\infty}) > 0$. Now by continuity of $\mathcal{D}_{\epsilon}$ at the point $f_{\infty}$, we get that there exist $\delta > 0$ and $M > 0$ such that if $\|\lan f\ran - f_{\infty}\|_{H^s} < \delta$, then $\mathcal{D}_{\epsilon}[\lan f\ran] > M$. The fact that $\mathcal{D}[\lan f\ran] > \mathcal{D}_{\epsilon}[\lan f\ran]$ gives the same result as before.}

Then we observe that by the uniform-in-time Sobolev bounds \eqref{f_nq_Hn}, \eqref{fz_Hn},  and the equation \eqref{eq_average}, the time derivative $\partial_t \lan f\ran$ is uniformly bounded  in $H^M$. Hence, there exists $\eta > 0$, which depends on $\nu$ and the initial data $f_0$, such that if $t - s \leq \eta$, then $\|\lan f\ran(t) - \lan f\ran(s)\|_{H^M} \leq \frac{\delta_Z}{3}$. We then take $N$ sufficiently large such that $\|\lan f\ran(t_n) - f_{\infty}\|_{H^M} \leq \frac{\delta_Z}{3}$ for all $n \geq N$. Thanks to the continuity argument before, for $n \geq N$, the Fisher information has a lower bound:
\begin{align*}\mathcal{D}[\lan f\ran ] \geq {Z>0}, \quad \forall t\in[t_n, t_n + \eta].
\end{align*} 
Without loss of generality, we assume that $t_{n+1} \geq t_n + \eta$. Hence, by the relation \eqref{DTL}, 
\begin{align*}
F[\lan f(t_n)\ran] - F[\lan f(t_{n+P})\ran] \geq& \int_{t_n}^{t_{n+P}} \mathcal{D}[\lan f\ran]dt-\int_{t_n}^{t_{n+P}}|\mathbb{T}_1|+|\mathbb{T}_2|dt \\
\geq& (P-1)\eta Z-C\delta^{-1}\nu^{-5/12}(1+\|f_0\|_{H^1}^2)\exp\lf\{-\frac{1}{2}\nu^{1/2}T_\ast\rg\}.
\end{align*}
Now, if we choose the $T_\ast(=2\nu^{-1/2}\log[C \delta^{-1}\nu^{-5/12}(1+\|f_0\|_{H^1}^2)]+2\nu^{-1/2}\log(\eta Z/2)^{-1})$ to be large enough such that the last term is dominated by $\frac{1}{2}\eta Z$, it is guaranteed that
 \begin{align*}
F[\lan f(t_n)\ran] - F[\lan f(t_{n+P})\ran] \geq\frac{1}{2}(P-1)\eta Z,
\end{align*}
and the difference in free energy will grow indefinitely. Since the left-hand side is bounded above by $F[\lan f\ran(0)]+\int_0^\infty |\mathbb{T}_1|+|\mathbb{T}_2|dt<\infty$, taking $P$ sufficiently large gives the contradiction. To conclude, we have that $\mathcal{D}[f_\infty]=0.$

{Finally, we prove \eqref{convergence}. Suppose that there exists an increasing subsequence $\{t_n\}_{n=1}^\infty$ with $ \lim_{n\rightarrow\infty}t_{n}=\infty$, such that there exists $M_*\in \mathbb{N}$,
\begin{align}
    \lim_{n\rightarrow \infty}\inf_{G\in \mathcal S_\infty}\|f(t_n,\cdot)-G(\cdot)\|_{H_{\bx,\te}^{M_*}}>0.
\end{align}
 Now we note that this sequence $\{f(t_{n})\}$ is also bounded in $H^{M_*+1}$ by \eqref{f_nq_Hn}, \eqref{fz_Hn}. Hence, we apply compact Sobolev embedding to extract a subsequence $\{t_{n_k}\}_{k=1}^\infty$ such that $f(t_{n_k},\cdot)\xrightarrow{H^{M_*}_{\bx,\te}} f_\infty(\cdot)$. Now, thanks to the Gagliardo-Nirenbergy inequality and the $H^M\, (M\in \mathbb{N})$ bounds \eqref{f_nq_Hn}, \eqref{fz_Hn}, we have that $f(t_{n_k},\cdot)$ converges to $f_\infty$ in other $H^M, \, M> M_*$ spaces. Hence $f_\infty\in C^\infty.$ Moreover, thanks to the argument above, we have that $\mathcal{D}[f_\infty]=0.$ However, this implies that $f_\infty \in \mathcal S_\infty$ and $\lim_{k\rightarrow \infty}\|f(t_{n_k})-f_\infty\|_{H^{M_*}}=0$, which is a contradiction. This concludes the proof. }
\end{proof}

\begin{proof}[Proof of Theorem \ref{thm_3}, {\bf Part b)}]
We note that $\overline{\lan f\ran}=\frac{1}{2\pi}$. Now we compute the time evolution of the quantity $\|\lan f\ran -\overline{\lan f\ran}\|_{L^2}^2$
\begin{align*}
\frac{1}{2}\frac{d}{dt}\lf\|\lan f\ran -\overline{\lan f\ran}\rg\|_{L^2}^2=& -\nu\lf\|\pa_\theta\lf(\lan f\ran -\overline{\lan f\ran}\rg)\rg\|_{L^2}^2+\kappa \int \pa_\theta\lf(\lan f\ran -\overline{\lan f\ran}\rg) \lf\lan f L[f]\rg\ran d\theta\\
=& -\frac{\nu}{2}\lf\|\pa_\theta\lf(\lan f\ran -\overline{\lan f\ran}\rg)\rg\|_{L^2}^2+\kappa \int \pa_\theta\lf(\lan f\ran -\overline{\lan f\ran}\rg)  \lf(\lan f\ran L[\lan f\ran]+\lan f_\nq L[f_\nq]\ran\rg) d\theta. \quad
\end{align*}
We apply the H\"older inequality, the Poincar\'e inequality, the relation $L[\lan f\ran -\overline{\lan f\ran}]=L[\lan f\ran]$ and the nonlinear enhanced dissipation \eqref{nl_ED} to obtain
\begin{align*}
\frac{1}{2}\frac{d}{dt}\lf\|\lan f\ran -\overline{\lan f\ran}\rg\|_{L^2}^2\leq& -\frac{\nu}{2}\lf\|\pa_\theta\lf(\lan f\ran-\overline{\lan f\ran} \rg)\rg\|_{L^2}^2+\frac{C\kappa^2}{\nu}\|L[\lan f\ran -\overline{\lan f\ran}]\|_{L^2}^2\\
&+\frac{C\kappa^2}{\nu}\|\lan f\ran -\overline{\lan f\ran}\|_{L^2}^2\|L[\lan f\ran -\overline{\lan f\ran}]\|_{L^\infty}^2+\frac{C\kappa^2}{\nu}\|f_{0;\nq}\|_{L^2}^2e^{-\delta\nu^{1/2}t}\\
\leq &-\lf(\frac{\nu}{2}-\frac{C\kappa^2}{\nu}\rg)\|\lan f\ran -\overline{\lan f\ran}\|_{L^2}^2+\frac{C\kappa^2}{\nu}\|f_{0;\nq}\|_{L^2}^2e^{-\delta\nu^{1/2}t}.
\end{align*}
We can see that there exists a constant $C_*$ such that if
\begin{align}
\nu\geq C_*\kappa,
\end{align}
the quantity $\lf\|\lan f\ran -\overline{\lan f\ran}\rg\|_{L^2}$ decays to zero as $t\rightarrow\infty$. 
\end{proof}

\begin{proof}[Proof of Theorem \ref{thm_3}, {\bf Part c)}]
Now, by Proposition 3.1 of \cite{FrouvelleLiu12}, this limiting function $f_\infty\in C^4$ with $\mathcal{D}[f_\infty]=0$ solves the equation \eqref{D=0}. Then the argument in the Appendix \ref{sec:g_s} yields the result.\myb{Check!}

\end{proof}

\myh{It seems that in the general case, as long as we have local convergence result about the free energy, and equivalence of $F[f]=F_{min}$ and $D[f]=0$, then we can repeat this argument.}

\appendix
\section{Linear \sima{Enhanced} Dissipation and Inviscid Damping}\label{App_A}
In this section, we study the simplified equation $\eqref{eq:bsc_1}_{\kappa=0}$ and derive the enhanced dissipation estimate. By  implementing the Fourier transform in the $\bx$ variables, one obtains the following $\bk$-by-$\bk$ equation  
\begin{align}\label{eq:bsc_k}
\pa_t \wh \eta_\bk +v i\bp\cdot\bk \wh \eta_\bk=\nu \pa^2_\theta \wh \eta_\bk,\quad \bk=(k_1,k_2),\quad \wh \eta_\bk(t=0,\bp)=\wh \eta_{0;\bk}(\bp).
\end{align}
Throughout the paper, we will use $\dss |\bk|=\sqrt{k_1^2+k_2^2}$ to denote the length of the vector $\bk.$ 
The remainder $\eta_\nq$, as defined in \eqref{x-avrg_rem}, can be decomposed as follows
\begin{align}
\eta_\nq(t,\bx,\bp)= \sum_{\bk\neq (0,0)} \wh \eta_{\bk}(t,\bp) e^{i\bk \cdot\bx}. 
\end{align}
The main goal of this section is to prove the following theorem.
\begin{theorem}\label{thm:led}
Consider solutions $\wh \eta_\bk\in C^1([0,\infty);H_\theta^2)$ to the equation \eqref{eq:bsc_k}. There exists a universal threshold $0<\nu_0\leq 1$ such that if $0<\nu\leq \nu_0$, then the following enhanced dissipation estimate holds
\begin{align}\label{ED}
\lf\|\wh \eta_\bk(t)\rg\|_{L^2}\leq C_0\lf\|\wh \eta_{0;\bk}\rg\|_{L^2}\exp\{-\delta_0 \nu^{1/2}|\bk|^{1/2}t\},\quad \forall t\geq 0.
\end{align}
Here, $C_0>1, \, \delta_0\in(0,1)$ are universal constants. 
\end{theorem}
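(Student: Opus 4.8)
The plan is to prove the enhanced dissipation estimate \eqref{ED} for the shear-type kinetic equation \eqref{eq:bsc_k} by reducing it, after rescaling time, to the problem already analyzed in the references \cite{AlbrittonOhm22,CotiZelatiDietertGerardVaret22,CotiZelatiDietertVaret24}, and then extracting the correct dependence on $|\bk|$. Concretely, for fixed $\bk\ne(0,0)$ write $\bp\cdot\bk = |\bk|\,(\bp\cdot\widehat{\bk})$ where $\widehat{\bk}=\bk/|\bk|$; by a rotation in $\te$ we may take $\widehat{\bk}$ to be a fixed direction, so the transport coefficient becomes $|\bk|\cos\te$ (or $|\bk|\sin\te$ after a further shift). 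Since $v(t)\in(1/2,1]$, the coefficient $v(t)|\bk|\cos\te$ is a time-dependent but uniformly-in-$[1/2,1]$-comparable multiple of $|\bk|\cos\te$; one should check that the hypocoercivity/resolvent estimates are robust to such bounded multiplicative time modulation (this is where the assumption $v\in(1/2,1]$ is used, rather than merely $v\equiv 1$).

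The main technical device I would use is the \emph{hypocoercivity} functional adapted to the operator $\mathcal L_\bk = -\nu\pa_\te^2 + i v|\bk|\cos\te$, in the spirit of \cite{BCZ15,villani2009}: introduce a modified energy
\begin{align}
\Phi_\bk(t) = \|\wh\eta_\bk\|_{L^2}^2 + \alpha\,\nu^{a}|\bk|^{-b}\|\pa_\te\wh\eta_\bk\|_{L^2}^2 + \beta\,\nu^{c}|\bk|^{-d}\,\mathrm{Re}\langle \pa_\te\wh\eta_\bk,\, i|\bk|\sin\te\,\wh\eta_\bk\rangle + \cdots,
\end{align}
with exponents $a,b,c,d$ and small constants $\alpha,\beta$ chosen so that $\Phi_\bk$ is comparable to $\|\wh\eta_\bk\|_{L^2}^2$ and satisfies $\frac{d}{dt}\Phi_\bk \le -\delta_0\,\nu^{1/2}|\bk|^{1/2}\,\Phi_\bk$. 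The cross term produces, via the commutator $[\pa_\te, \cos\te]=-\sin\te$, the coercive contribution $|\bk|\sin^2\te$ which, combined with the $\nu\pa_\te^2$ dissipation and Poincaré-type inequalities on the arcs where $\sin\te$ is non-degenerate, upgrades the decay rate from $\nu|\bk|^2$ (naive heat scaling, wrong) to $(\nu|\bk|)^{1/2}$. The degeneracy of $\cos\te$ at $\te=\pm\pi/2$ (where the transport stalls) is handled exactly as the degeneracy of $\cos$ in the Keller–Segel / Saintillan–Shelley analyses: locally near those points the decay is governed by the $\nu\pa_\te^2$ term on a boundary layer of width $\sim(\nu/|\bk|)^{1/3}$, and the balance of the boundary-layer contribution with the bulk mixing contribution again gives the $(\nu|\bk|)^{1/2}$ rate. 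One should also verify that $\wh\eta_\bk$ has zero $\te$-average is \emph{not} needed here — unlike \eqref{ps}, the full $\wh\eta_\bk$ decays because $\bk\ne 0$ makes the transport operator have trivial kernel.

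Alternatively, and perhaps cleaner for bookkeeping, I would run the \emph{resolvent estimate} route of \cite{Wei18}: establish a uniform bound
\begin{align}
\big\|(\mathcal L_\bk - i\lambda)^{-1}\big\|_{L^2\to L^2} \le C\,\nu^{-1/2}|\bk|^{-1/2},\qquad \forall \lambda\in\Real,
\end{align}
by testing the resolvent equation $\nu\pa_\te^2 u - i(v|\bk|\cos\te-\lambda)u = h$ against $u$ and against $\pa_\te u$, splitting the circle into the region where $|v|\bk|\cos\te-\lambda|$ is large (controlled directly) and the boundary layer around its zero set (controlled by the dissipation), then invoking the Gearhart–Prüss / quantitative semigroup theorem to convert the resolvent bound into the exponential decay \eqref{ED} with rate $\delta_0(\nu|\bk|)^{1/2}$ and a universal prefactor $C_0$. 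Handling the time dependence $v=v(t)$ requires the time-dependent version of Gearhart–Prüss (as in \cite{CotiZelatiDietertGerardVaret22}) or a direct time-interval iteration: partition $[0,\infty)$ into slabs of length $\sim(\nu|\bk|)^{-1/2}$, freeze $v$ on each slab (the error from $\dot v$ is lower order since the slab is short), and compose.

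The hard part will be the uniformity: both the hypocoercivity constants and the resolvent constant must be \emph{independent of $\bk$ and of $\nu\le\nu_0$}, with the precise power $|\bk|^{1/2}\nu^{1/2}$, and must be robust to the bounded-but-nonconstant factor $v(t)$. Getting the right boundary-layer scaling $(\nu/|\bk|)^{1/3}$ near the stagnation points $\te=\pm\pi/2$ and showing it contributes exactly an $\mathcal O((\nu|\bk|)^{1/2})$ term (not something larger) is the crux; this is precisely the computation carried out in \cite{AlbrittonOhm22,CotiZelatiDietertGerardVaret22,CotiZelatiDietertVaret24} for $v\equiv 1$, so the real work here is to audit their argument and insert the $v(t)$-dependence and the explicit $|\bk|$-tracking, which I expect to go through with only cosmetic changes.
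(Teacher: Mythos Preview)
Your hypocoercivity plan is the same route the paper takes, and the structural ingredients you list (the cross term built from the commutator $[\pa_\te,\bp\cdot\bk]$, the weight $\sin(\te-\te_\bk)$, the spectral inequality replacing Poincar\'e) all match. Two points, however, separate your sketch from a working proof.

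First, the functional you write down has \emph{constant} coefficients $\alpha\nu^a|\bk|^{-b}$, etc., and you then claim $\Phi_\bk$ is comparable to $\|\wh\eta_\bk\|_{L^2}^2$. That comparability fails: with constant weights, $\Phi_\bk(0)$ contains $\nu^{1/2}|\bk|^{-1/2}\|\pa_\te\wh\eta_{0;\bk}\|_{L^2}^2$, which is not bounded by $\|\wh\eta_{0;\bk}\|_{L^2}^2$ for general $L^2$ data, so you do not get the $L^2\to L^2$ estimate \eqref{ED}. The paper repairs this with a \emph{time-growing} weight $\zeta_\bk(t)=\min\{1,\nu^{1/2}|\bk|^{1/2}t\}$ multiplying the higher-order terms, so that $F[\wh\eta_\bk](0)=\|\wh\eta_{0;\bk}\|_{L^2}^2$ exactly; the price is an extra positive term $\mathbbm{1}_{t\le\nu^{-1/2}|\bk|^{-1/2}}C\nu^{1/2}|\bk|^{1/2}F$ in the differential inequality, which integrates to a harmless universal factor $e^C$ on the initial layer. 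This time-dependent weight is the one nontrivial adaptation you are missing; once it is in place, the $v(t)$-dependence is absorbed directly in the $T_\al,T_\beta,T_\gamma$ estimates (using only $v\in(1/2,1]$), with no need for slab-freezing.

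Second, the paper explicitly remarks that the resolvent/Gearhart--Pr\"uss route you propose as an alternative is only available when $v'\equiv 0$; the hypocoercivity functional is used precisely because it handles time-dependent $v$ without additional machinery. Your slab-freezing idea could in principle be made to work, but it is not what the paper does and would require a separate argument controlling the error from $\dot v$ on each slab. Finally, your $(\nu/|\bk|)^{1/3}$ boundary-layer heuristic is not invoked anywhere in the actual proof: the degeneracy at the critical points of $\bp\cdot\bk$ is handled entirely by the spectral inequality $\nu^{1/2}|\bk|^{1/2}\|\eta_\bk\|^2\le\nu\|\pa_\te\eta_\bk\|^2+C|\bk|\|\sin(\te-\te_\bk)\eta_\bk\|^2$, with no explicit layer analysis.
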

\begin{remark}
If the speed $v(t)$ is invariant in time, i.e., $v'\equiv 0$, then there exists an alternative proof of the theorem using the resolvent analysis, see, e.g., \cite{FengShiWang22}. Here we employ the machinery of the hypocoercivity and extend the result to the time-dependent setting. 
\end{remark}

The proof of the theorem consists of several lemmas. The main object of study is the Hypocoercivity functional \cite{villani2009,BCZ15}
\begin{align}\label{F}
\quad \quad F[\wh \eta_\bk]:=&\|\wh \eta_\bk\|_2^2+\al\zeta_\bk\nu^{1/2}|\bk|^{-1/2}\|\pa_\theta \wh \eta_\bk\|_2^2+\beta\zeta_\bk^2 |\bk|^{-1}\Re\lan i(- k_1\sin\theta+ k_2\cos\te)  \wh \eta_\bk , \pa_\theta \wh \eta_\bk \ran\\
&\quad +\gamma\zeta_\bk^3\nu^{-1/2}|\bk|^{-1/2}\|(- k_1\sin\theta+k_2\cos\te ) \wh \eta_\bk\|_2^2,\qquad \zeta_\bk:=\min\{1, \nu^{1/2}|\bk|^{1/2}t\}.
\end{align}Here $\lan f,g\ran=\dss \int_{{\mathbb T}} f\overline{g}d\theta$ and $\al,\beta,\gamma$ are constants to be chosen. The spatial weights $i(-\sin\theta k_1+\cos\theta k_2)$ can be interpreted as the commutator of the conservative part and the dissipation part of the equation \eqref{eq:bsc_k}, i.e., $-[i\bp\cdot \bk,\partial_\theta]$. A similar version of the  Hypocoercivity functional \eqref{F} is already introduced in the papers of Albritton-Ohm \cite{AlbrittonOhm22}, and Coti Zeliati-Dietert-Gerard Varet  \cite{CotiZelatiDietertGerardVaret22} \sima{in two and three dimensions}. \sima{However, the explicit formulas are more explicit in a two-dimensional setting, hence we decided to carry out the details here. }
Since the equation \eqref{eq:bsc_k} is linear, the dynamics of the $\wh \eta_\bk$'s will not interfere with each other. Hence, we will use the simplified notation $ \eta:=\wh \eta_{\bk}$ when there is no cause for confusion. To further simplify the notation, we introduce the quantity 
\begin{align}\label{theta_k}
(\cos(\theta_\bk),\sin(\theta_\bk)):=\lf(\frac{k_1}{|\bk|},\frac{k_2}{|\bk|} \rg),\quad \lf(-\sin(\theta)\frac{k_1}{|\bk|}+\cos(\theta)\frac{k_2}{|\bk|}\rg)=-\sin(\theta-\theta_\bk),\quad |\bk|=\sqrt{k_1^2+k_2^2}.
\end{align}
As a result, the functional $F[\wh \eta_\bk](t)$ can be rewritten as follows:
\begin{align}\label{F_fnct}
F[\wh \eta_\bk](t)=&\|\wh \eta_\bk\|_{L^2}^2+\al\zeta_\bk\nu^{1/2}|\bk|^{-1/2}\|\pa_\theta \wh \eta_\bk\|_{L^2}^2-\beta\zeta_\bk^2 \Re\lan i\sin(\theta-\theta_\bk) \wh \eta_\bk , \pa_\theta \wh \eta_\bk \ran\\
&\quad+\gamma\zeta_\bk^3\nu^{-1/2}|\bk|^{1/2}\| \sin(\theta-\theta_\bk)\wh \eta_\bk\|_{L^2}^2.
\end{align}
We will prove two main lemmas concerning the functional \eqref{F_fnct}.
\begin{lem}[Comparison]\label{lem:eqv}
Assume the relation
    \begin{align}\label{eqv_req}
	 \beta^2 \leq \al\gamma. 
    \end{align}
    Then, the following equivalence relation holds
    \begin{align}\label{eqv}
        \|&\eta_\bk\|_2^2 + \frac{1}{2}\lf(\al\zeta_\bk\lf(\frac{\nu}{|\bk|}\rg)^{1/2}\|\pa_\theta \wh \eta_\bk\|_2^2 + \gamma\zeta_\bk^3\lf(\frac{\nu}{|\bk|}\rg)^{-1/2}\|\sin(\theta-\theta_\bk)\eta_\bk\|_2^2\rg) \\
         \leq& F[\eta_\bk]\leq \|\eta_\bk\|_2^2 + \frac{3}{2}\lf(\al\zeta_\bk\lf(\frac{\nu}{|\bk|}\rg)^{1/2}\|\pa_\theta \eta_\bk\|_2^2 + \gamma\zeta_\bk^3\lf(\frac{\nu}{|\bk|}\rg)^{-1/2}\|\sin(\theta-\theta_\bk)\eta_\bk\|_2^2\rg).
\end{align}
\end{lem}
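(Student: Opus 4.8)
\textbf{Proof proposal for Lemma \ref{lem:eqv} (Comparison).}

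The plan is to show that the two ``cross-type'' contributions in the hypocoercivity functional \eqref{F_fnct} — namely the weighted $\dot H^1$ term and the weighted $\sin(\theta-\theta_\bk)$-multiplier term — absorb the indefinite mixed term $-\beta\zeta_\bk^2\,\Re\langle i\sin(\theta-\theta_\bk)\wh\eta_\bk,\pa_\theta\wh\eta_\bk\rangle$, provided the weights $\al,\beta,\gamma$ satisfy the stated inequality $\beta^2\le\al\gamma$. First I would bound the mixed term by Cauchy--Schwarz in $L^2_\theta$:
\begin{align}
\bigl|\beta\zeta_\bk^2\,\Re\langle i\sin(\theta-\theta_\bk)\wh\eta_\bk,\pa_\theta\wh\eta_\bk\rangle\bigr|
\le \beta\zeta_\bk^2\,\|\sin(\theta-\theta_\bk)\wh\eta_\bk\|_2\,\|\pa_\theta\wh\eta_\bk\|_2.
\end{align}
Then, using Young's inequality $ab\le \tfrac12(\lambda a^2+\lambda^{-1}b^2)$ with the parameter $\lambda$ chosen so that the two resulting terms match the coefficients of the $\dot H^1$ and multiplier terms, I would split $\beta\zeta_\bk^2 ab$ into a piece of size $\tfrac12\al\zeta_\bk(\nu/|\bk|)^{1/2}\|\pa_\theta\wh\eta_\bk\|_2^2$ and a piece of size $\tfrac12\gamma\zeta_\bk^3(\nu/|\bk|)^{-1/2}\|\sin(\theta-\theta_\bk)\wh\eta_\bk\|_2^2$. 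The bookkeeping works precisely because $\zeta_\bk^2 = \zeta_\bk^{1/2}\cdot\zeta_\bk\cdot\zeta_\bk^{1/2}$ and $(\nu/|\bk|)^{1/2}\cdot(\nu/|\bk|)^{-1/2}=1$ combine so that the product of the two target coefficients equals $\al\gamma\zeta_\bk^4$, while the square of the mixed coefficient is $\beta^2\zeta_\bk^4$; hence the constraint $\beta^2\le\al\gamma$ is exactly what guarantees the $\tfrac12$-loss in each term.

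Concretely, the key step is: choose $\lambda = (\gamma/\al)^{1/2}\zeta_\bk\,(\nu/|\bk|)^{-1/2}$ in Young's inequality, which yields
\begin{align}
\beta\zeta_\bk^2\,\|\sin(\theta-\theta_\bk)\wh\eta_\bk\|_2\,\|\pa_\theta\wh\eta_\bk\|_2
\le \frac{\beta}{2(\al\gamma)^{1/2}}\left(\al\zeta_\bk\Bigl(\frac{\nu}{|\bk|}\Bigr)^{1/2}\|\pa_\theta\wh\eta_\bk\|_2^2
+ \gamma\zeta_\bk^3\Bigl(\frac{\nu}{|\bk|}\Bigr)^{-1/2}\|\sin(\theta-\theta_\bk)\wh\eta_\bk\|_2^2\right).
\end{align}
Since $\beta\le(\al\gamma)^{1/2}$ by \eqref{eqv_req}, the prefactor $\beta/(2(\al\gamma)^{1/2})$ is at most $\tfrac12$. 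Substituting this bound back into \eqref{F_fnct} — adding it to subtract it for the lower bound, subtracting it to add it for the upper bound — immediately gives
\begin{align}
\|\wh\eta_\bk\|_2^2 + \frac12\Bigl(\al\zeta_\bk(\nu/|\bk|)^{1/2}\|\pa_\theta\wh\eta_\bk\|_2^2 + \gamma\zeta_\bk^3(\nu/|\bk|)^{-1/2}\|\sin(\theta-\theta_\bk)\wh\eta_\bk\|_2^2\Bigr)
\le F[\wh\eta_\bk],
\end{align}
and the matching upper bound with $\tfrac32$ in place of $\tfrac12$. This is exactly \eqref{eqv}.

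There is essentially no serious obstacle here — this is a routine algebraic manipulation — but the one point requiring a little care is tracking the powers of $\zeta_\bk$ and of $(\nu/|\bk|)$ so that the weighted norms reassemble \emph{exactly} into the three terms appearing in $F[\wh\eta_\bk]$; getting the exponents slightly wrong (e.g. $\zeta_\bk^2$ versus $\zeta_\bk^{3/2}$, or misplacing a factor of $|\bk|^{1/2}$) would break the identification. One should also note that the estimate is uniform in $t$ because $\zeta_\bk\in[0,1]$ and all manipulations are pointwise in $t$; no positivity or regularity of the solution beyond $\wh\eta_\bk\in H^1_\theta$ (needed merely to make $\|\pa_\theta\wh\eta_\bk\|_2$ finite) is used. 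Finally, I would record that when $\zeta_\bk=0$ (i.e. $t=0$) the inequality is the trivial identity $\|\wh\eta_\bk\|_2^2\le\|\wh\eta_\bk\|_2^2\le\|\wh\eta_\bk\|_2^2$, so there is no degeneracy at the initial time.
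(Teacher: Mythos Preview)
Your proof is correct and follows essentially the same approach as the paper: bound the indefinite cross term via Cauchy--Schwarz (H\"older) and Young's inequality, then use $\beta^2\le\al\gamma$ to absorb it into the two positive weighted terms. The only cosmetic difference is that you choose the Young parameter symmetrically, $\lambda=(\gamma/\al)^{1/2}\zeta_\bk(\nu/|\bk|)^{-1/2}$, which yields the $\tfrac12,\tfrac32$ coefficients directly, whereas the paper chooses it asymmetrically (absorbing into the $\al$-term first, obtaining $\gamma\pm\beta^2/(2\al)$ on the multiplier term, and then invoking $\beta^2/(2\al)\le\gamma/2$) --- but this is a matter of taste, not a different argument.
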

\begin{proof}
   We apply H\"older inequality and Young's inequality,
    \begin{align}
        F[\eta] \leq& \|\eta\|_2^2 + \al\zeta_\bk\nu^{1/2}|\bk|^{-1/2}\|\pa_\theta \eta\|_2^2 + \beta\zeta_\bk^2\|\sin(\theta-\theta_\bk)\eta\|_2^2\|\pa_\theta \eta\|_2^2 + \gamma\zeta_\bk^3\nu^{-1/2}|\bk|^{1/2}\|\sin(\theta-\theta_\bk)\eta\|_2^2\\
        \leq& \|\eta\|_2^2 + \frac{3\al}{2}\zeta_\bk\nu^{1/2}|\bk|^{-1/2}\|\pa_\theta \eta\|_2^2 + \lf(\gamma + \frac{\beta^2}{2\al}\rg)\zeta_\bk^3\nu^{-1/2}|\bk|^{1/2}\|\sin(\theta-\theta_\bk)\eta\|_2^2.
    \end{align}
    Similarly, we have the lower bound,
    \begin{align}
        F[\eta] \geq \|\eta\|_2^2 + \frac{\al}{2}\zeta_\bk\nu^{1/2}|\bk|^{-1/2}\|\pa_\theta \eta\|_2^2 + \lf(\gamma - \frac{\beta^2}{2\al}\rg)\zeta_\bk^3\nu^{-1/2}|\bk|^{1/2}\|\sin(\theta-\theta_\bk)\eta\|_2^2.
    \end{align}
    Since \eqref{eqv_req} implies that $\frac{\beta^2}{2\al} \leq \frac{\gamma}{2}$, we obtain \eqref{eqv}.
\end{proof}

The second main lemma concerning the functional $F$ reads as follows.
\begin{lem}[Enhanced Dissipation]\label{lem:LED}
There exists a choice of parameters $\al, \beta,\gamma$ such that the time derivative of the functional \eqref{F_fnct} is bounded as follows
\begin{align}\label{led}
\frac{d}{dt}F[\wh \eta_\bk](t)\leq \mathbbm{1}_{t\leq \nu^{-1/2}|\bk|^{-1/2}}C \nu^{1/2}|\bk|^{1/2}F[\wh \eta_\bk]-2\delta\nu^{1/2}|\bk|^{1/2}F[\eta_\bk],\quad \forall t\in[0,\infty). 
\end{align}
Here $C>1,\ \delta\in (0,1)$ are universal constants. 
\end{lem}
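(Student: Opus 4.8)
The plan is a Villani-type hypocoercivity estimate for the multiplicative transport operator $vi\bp\cdot\bk=vi|\bk|\cos(\theta-\theta_\bk)$ in \eqref{eq:bsc_k}, in the spirit of \cite{villani2009,BCZ15} and the versions in \cite{AlbrittonOhm22,CotiZelatiDietertGerardVaret22}. Writing $\eta=\wh\eta_\bk$, $\omega=v|\bk|\in(|\bk|/2,|\bk|]$, $c(\theta)=\cos(\theta-\theta_\bk)$, $b(\theta)=\sin(\theta-\theta_\bk)$ (so $b'=c$, $c'=-b$, and $b$ is, up to the scalar $\omega$, the commutator $-[i\bp\cdot\bk,\pa_\theta]$), I would first record four elementary identities obtained by differentiating in $t$ and using the equation: $\frac{d}{dt}\|\eta\|_{L^2}^2=-2\nu\|\pa_\theta\eta\|_{L^2}^2$ (the transport term contributes a purely imaginary inner product, hence drops); $\frac{d}{dt}\|\pa_\theta\eta\|_{L^2}^2=-2\nu\|\pa_\theta^2\eta\|_{L^2}^2+2\omega\,\Re\langle ib\eta,\pa_\theta\eta\rangle$; the crucial mixed identity $\frac{d}{dt}\big(-\Re\langle ib\eta,\pa_\theta\eta\rangle\big)=-\omega\|b\eta\|_{L^2}^2+\nu\mathcal E_1$, in which the two transport contributions (each of the form $\pm\omega\,\Re\langle bc\eta,\pa_\theta\eta\rangle$) cancel identically and $\mathcal E_1$ collects $\nu$-linear remainders such as $\Re\langle ib\pa_\theta^2\eta,\pa_\theta\eta\rangle$ and $\Re\langle ib'\eta,\pa_\theta^2\eta\rangle$ after one integration by parts; and a routine $\frac{d}{dt}\|b\eta\|_{L^2}^2=-2\nu\|b\pa_\theta\eta\|_{L^2}^2+\nu\mathcal E_2$ whose transport term vanishes because $\int b^2c|\eta|^2$ is real. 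A point worth stressing is that $v'$ appears nowhere in these identities — only $v\in(1/2,1]$ is used — which is precisely why this argument, unlike the resolvent proof in \cite{FengShiWang22}, tolerates a time-dependent speed.

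Next I would assemble $\frac{d}{dt}F[\wh\eta_\bk]$ from \eqref{F_fnct} and run the standard coercivity cascade. The coercive output $-\beta\zeta_\bk^2\omega\|b\eta\|_{L^2}^2\le-\tfrac12\beta\zeta_\bk^2|\bk|\,\|b\eta\|_{L^2}^2$ of the mixed term must dominate the bad cross term $2\alpha\zeta_\bk\nu^{1/2}|\bk|^{-1/2}\cdot\omega\,\Re\langle ib\eta,\pa_\theta\eta\rangle$ coming from $\frac{d}{dt}\|\pa_\theta\eta\|_{L^2}^2$: by Young's inequality the latter is $\le\frac{\beta}{4}\zeta_\bk^2|\bk|\,\|b\eta\|_{L^2}^2+C\tfrac{\alpha^2}{\beta}\nu\|\pa_\theta\eta\|_{L^2}^2$, with the first piece reabsorbed and the second swallowed by the diagonal dissipation $-2\nu\|\pa_\theta\eta\|_{L^2}^2$ once $\beta\gtrsim\alpha^2$. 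The remainders $\nu\mathcal E_1,\nu\mathcal E_2$, after integration by parts and the bounds $\|b\|_{W^{2,\infty}}\le C$, are controlled by $\nu\big(\|\pa_\theta^2\eta\|_{L^2}\|\pa_\theta\eta\|_{L^2}+\|\pa_\theta^2\eta\|_{L^2}\|\eta\|_{L^2}\big)$ and absorbed into the higher-order dissipation $-2\nu\|\pa_\theta\eta\|_{L^2}^2$ and $-2\alpha\zeta_\bk\nu^{3/2}|\bk|^{-1/2}\|\pa_\theta^2\eta\|_{L^2}^2$ once $\nu\le\nu_0$ is small; the only borderline region is where $\zeta_\bk$ is near $0$, which is exactly where the $\dot\zeta_\bk$ terms enter. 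Collecting those $\dot\zeta_\bk$-contributions — present only when $t\le\nu^{-1/2}|\bk|^{-1/2}$, where $\dot\zeta_\bk=\nu^{1/2}|\bk|^{1/2}$ — they are bounded, via Lemma~\ref{lem:eqv}, by $C\nu^{1/2}|\bk|^{1/2}F[\wh\eta_\bk]$, which is precisely the indicator term on the right of \eqref{led}.

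What survives the cascade is, schematically, $\frac{d}{dt}F\le\mathbbm 1_{t\le\nu^{-1/2}|\bk|^{-1/2}}C\nu^{1/2}|\bk|^{1/2}F-c\big(\nu\|\pa_\theta\eta\|_{L^2}^2+\beta\zeta_\bk^2|\bk|\,\|b\eta\|_{L^2}^2+\gamma\zeta_\bk^3\nu^{1/2}|\bk|^{1/2}\|b\eta\|_{L^2}^2+2\alpha\zeta_\bk\nu^{3/2}|\bk|^{-1/2}\|\pa_\theta^2\eta\|_{L^2}^2\big)$, and I would convert the right side into $-2\delta\nu^{1/2}|\bk|^{1/2}F$ using the Poincaré–interpolation inequality on $\mathbb T$ for profiles with nondegenerate zeros (as in \cite{AlbrittonOhm22,CotiZelatiDietertGerardVaret22}), $\|\eta\|_{L^2}^2\le C_P\big(\|\pa_\theta\eta\|_{L^2}\|b\eta\|_{L^2}+\|b\eta\|_{L^2}^2\big)$. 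Splitting $\nu^{1/2}|\bk|^{1/2}\|\eta\|_{L^2}^2$ this way: the cross piece $\nu^{1/2}|\bk|^{1/2}\|\pa_\theta\eta\|_{L^2}\|b\eta\|_{L^2}$ is balanced between $\nu\|\pa_\theta\eta\|_{L^2}^2$ and $|\bk|\,\|b\eta\|_{L^2}^2$ (whose geometric mean carries exactly the weight $\nu^{1/2}|\bk|^{1/2}$), and the piece $\nu^{1/2}|\bk|^{1/2}\|b\eta\|_{L^2}^2$ is dominated by the $\gamma$-term where $\zeta_\bk\simeq1$ and by the indicator term otherwise; the weighted quantities $\zeta_\bk\nu^{1/2}|\bk|^{-1/2}\|\pa_\theta\eta\|_{L^2}^2$ and $\zeta_\bk^3\nu^{-1/2}|\bk|^{1/2}\|b\eta\|_{L^2}^2$ in $F$ pick up their own decay at rate $\nu^{1/2}|\bk|^{1/2}$ from the corresponding dissipation terms. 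Invoking Lemma~\ref{lem:eqv} to identify this combination with $F$ then yields \eqref{led}. Throughout, the parameters are fixed in order: $\alpha$ first, then $\beta\gg\alpha^2$, then $\gamma$ small with $\beta^2\le\alpha\gamma$ (so that Lemma~\ref{lem:eqv} applies), then $\nu_0$ and $\delta$.

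The main obstacle is reconciling the two opposite demands on $\beta$: Lemma~\ref{lem:eqv} forces the mixed term in $F$ to be \emph{small} relative to its two positive terms (i.e.\ $\beta^2\le\alpha\gamma$), whereas closing $\frac{d}{dt}F$ requires the \emph{derivative} of that same mixed term to supply \emph{enough} coercivity (i.e.\ $\beta\gg\alpha^2$, with the $\nu$-remainders absorbed) — and the only way to meet both is to carry along the specific powers $\nu^{1/2}|\bk|^{\pm1/2}$ and the cutoff powers $\zeta_\bk,\zeta_\bk^2,\zeta_\bk^3$ already built into \eqref{F_fnct}. A secondary nuisance is the junction at $t=\nu^{-1/2}|\bk|^{-1/2}$ where $\zeta_\bk$ stops increasing: for $t\le\nu^{-1/2}|\bk|^{-1/2}$ one only needs $F$ not to grow faster than $e^{C\nu^{1/2}|\bk|^{1/2}t}$ (hence the harmless indicator term), and for $t>\nu^{-1/2}|\bk|^{-1/2}$ the cutoff is frozen at $1$ and $\dot\zeta_\bk=0$, so the clean exponential decay takes over; verifying continuity of $F$ and the estimate across the junction is routine but must be carried out.
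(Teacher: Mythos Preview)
Your approach is exactly the hypocoercivity cascade the paper uses, and most of the accounting is right: the four elementary identities, the cancellation of the transport cross terms in the mixed functional, the appearance of the key coercive output $-\beta\zeta_\bk^2 v|\bk|\,\|b\eta\|_{L^2}^2$, and the role of the $\dot\zeta_\bk$ contributions. The spectral/interpolation inequality you cite is equivalent to the one the paper uses.

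There is, however, a genuine gap in your absorption of the diffusive remainder $\nu\mathcal E_1$. Your crude bound $\beta\zeta_\bk^2\,\nu\big(\|\pa_\theta^2\eta\|_{L^2}\|\pa_\theta\eta\|_{L^2}+\|\pa_\theta^2\eta\|_{L^2}\|\eta\|_{L^2}\big)$ cannot be absorbed into $-\nu\|\pa_\theta\eta\|_{L^2}^2$ and $-\alpha\zeta_\bk\nu^{3/2}|\bk|^{-1/2}\|\pa_\theta^2\eta\|_{L^2}^2$ uniformly in $\bk$: after Young's inequality the first product leaves a residual $\tfrac{\beta^2}{\alpha}\zeta_\bk^3\nu^{1/2}|\bk|^{1/2}\|\pa_\theta\eta\|_{L^2}^2$, and requiring this to be dominated by $\nu\|\pa_\theta\eta\|_{L^2}^2$ forces $\tfrac{\beta^2}{\alpha}\le \nu^{1/2}|\bk|^{-1/2}$, which fails for large $|\bk|$ regardless of how small $\nu$ is. Taking $\nu\le\nu_0$ small therefore does not help (it hurts), and the lemma as stated requires $\alpha,\beta,\gamma$ to be universal.

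The fix is precisely what the paper does: retain the weight $b=\sin(\theta-\theta_\bk)$ when you estimate $\Re\langle ib\pa_\theta^2\eta,\pa_\theta\eta\rangle$, i.e.\ bound it by $\|\pa_\theta^2\eta\|_{L^2}\|b\,\pa_\theta\eta\|_{L^2}$ rather than $\|\pa_\theta^2\eta\|_{L^2}\|\pa_\theta\eta\|_{L^2}$. The matching negative term is supplied by the $\gamma$-block, whose time derivative produces $-\gamma\zeta_\bk^3\nu^{1/2}|\bk|^{1/2}\|b\,\pa_\theta\eta\|_{L^2}^2$ (a term you never invoke). The resulting constraint is $\beta^2\le\alpha\gamma$, which is exactly the hypothesis of Lemma~\ref{lem:eqv}, and then no smallness of $\nu$ is needed. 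The paper's concrete choice $\alpha=\tfrac{1}{4}\beta^{1/2}$, $\gamma=4\beta^{3/2}$, $\beta$ small, makes all constraints (including the spectral closure) compatible; note in particular that $\beta\sim\alpha^2$ and $\gamma\sim\beta^2/\alpha$, so your phrase ``$\beta\gg\alpha^2$, then $\gamma$ small'' is slightly off and should be replaced by this precise scaling.
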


With Lemma \ref{lem:eqv} and Lemma \ref{lem:LED}, the proof of Theorem \ref{thm:led} is direct. 
\begin{proof}[Proof of Theorem \ref{thm:led}]
We integrate the relation \eqref{led} on the time interval $[0,\nu^{-1/2}|\bk|^{-1/2}]$ to obtain that 
\begin{align}
F[\eta_\bk(t)]\leq e^{C}F[\wh \eta_{0;\bk}],\quad \forall t\in[0,\nu^{-1/2}|\bk|^{-1/2}]. \label{short_t}
\end{align}
For $t>\nu^{-1/2}|\bk|^{-1/2}$, we integrate in time and invoke the bound \eqref{short_t} to obtain that
\begin{align}
F[\wh \eta_\bk(t)]\leq& F[\wh \eta_\bk(\nu^{-1/2}|\bk|^{-1/2})]\exp\{-2\delta\nu^{1/2}|\bk|^{1/2}(t-\nu^{-1/2}|\bk|^{-1/2})\}\\
\leq &e^{C+2\delta}F[\wh \eta_{0;\bk}]\exp\{-2\delta \nu^{1/2}|\bk|^{1/2}t\}=e^{C+2\delta}\|\wh \eta_{0;\bk}\|_{L^2}^2\exp\{-2\delta \nu^{1/2}|\bk|^{1/2}t\}.
\end{align}
Since $F[\eta_\bk]\geq \|\eta_\bk\|_{2}^2$, we have obtained the result.
\end{proof}
The most technical part of the proof then boils down to the justification of Lemma \ref{lem:LED}. 
\begin{proof}[Proof of Lemma \ref{lem:LED} ]
We take the time derivative of the $F[\eta(t)]$ functional,
\begin{align}\label{dtF}
\frac{d}{dt}F[\eta(t)]=& \frac{d}{dt}\|\eta\|_2^2+\al\nu^{1/2}|\bk|^{-1/2}\frac{d}{dt}(\zeta_\bk\|\pa_\theta \eta\|_2^2)-\beta \frac{d}{dt}(\zeta_\bk^2\Re\lan i\sin(\theta-\theta_\bk)\eta , \pa_\theta \eta\ran)\\
\n &+\gamma\nu^{-1/2}|\bk|^{1/2}\frac{d}{dt}(\zeta_\bk^3\|\sin(\theta-\theta_{\mathbf{k}})\eta\|_2^2)\\
=:&T_{L^2\n}+T_\al+T_\beta+T_\gamma.
\end{align}

Now we explicitly estimate each term in the expression \eqref{dtF}. The $T_{L^2}$-term is direct:
\begin{align}
\label{TL2}T_{L^2}=-2\nu \|\pa_\theta \eta\|_2^2.
\end{align}
To estimate the $T_\al$-term, we have 
\begin{align}\label{Tal}\quad \
T_\al
\leq&\al\nu\|\pa_\theta \eta\|_{2}^2-2\zeta_\bk\al\nu^{3/2}|\bk|^{-1/2}\|\pa_{\theta}^2\eta\|_2^2+\frac{\beta}{2}|\bk| v^2\zeta_\bk^2\|\sin(\theta-\theta _\bk)\eta\|_{2}^2+\frac{2\al^2}{\beta}\nu\|\pa_{\theta}\eta\|_2^2. 
\end{align}
To estimate the $T_\beta$-term, we write out the expression explicitly
\begin{align}\label{T_beta123} 
T_\beta=&-2\beta \zeta_\bk\zeta_\bk'\Re\lan i\sin(\theta-\theta_\bk)\eta,\pa_\theta \eta\ran-{\beta}\zeta_\bk^2 \Re \lan i\sin(\theta-\theta_\bk) \pa_t \eta, \pa_\theta \eta\ran\\
&- \beta \zeta_\bk^2 \Re\lan i\sin(\theta-\theta_\bk)  \eta, \pa_t\pa_\theta \eta\ran\\
=:& T_{\beta,1}+T_{\beta,2}+T_{\beta,3}.\n
\end{align}
We estimate the $T_{\beta,1}$-term as follows
\begin{align}
T_{\beta,1}\leq 2\beta \nu^{1/2}|\bk|^{1/2} \zeta_\bk\|\sin(\theta-\theta_\bk)\eta\|_{L^2}\|\pa_\theta \eta\|_{L^2}\leq \frac{1}{16}\beta |\bk|\zeta_\bk^2\|\sin(\theta-\theta_\bk)\eta\|_2^2+16\beta \nu\|\pa_{\theta}\eta\|_{2}^2.
\end{align}
The $T_{\beta,2}$-term can be estimated as follows 
\begin{align}
\n T_{\beta,2}
=&-{\beta}\zeta_\bk^2\Re \lan  i\sin(\theta-\theta_\bk)(\nu\pa_\theta^2\eta-vi\bp\cdot \bk \eta),  \pa_\theta \eta\ran\\
\leq &{\nu\beta}\zeta_\bk^2\|\pa_\theta^2\eta\|_{2}\|\sin(\theta-\theta_\bk)\pa_\theta \eta\|_2-{\beta}v\zeta_\bk^2\Re\lan \sin (\theta -\theta_\bk)\bp\cdot\bk \eta,\pa_\theta \eta\ran. 
\end{align}
Now we estimate the $T_{\beta,3}$-term in \eqref{T_beta123} using integration by parts,  H\"older inequality and Young's inequality, 
\begin{align}\n
T_{\beta,3}
& \leq \nu\beta\zeta_\bk^2\|\pa_\theta^2\eta\|_{2}\|\pa_\theta(\sin(\theta-\theta_\bk) \eta)\|_2+{\beta} v\zeta_\bk^2\Re\lan \sin (\theta-\theta_\bk) \eta,(\bp\cdot \bk \pa_\theta \eta+(-\sin\theta k_1 + \cos\theta k_2)\eta)\ran\\
&\leq\nu\beta\zeta_\bk^2\|\pa_\theta^2\eta\|_{2}(\|\sin(\theta-\theta_\bk)\pa_\theta \eta\|_2+\|\eta\|_2)+{\beta} v\zeta_\bk^2\Re\lan \sin (\theta-\theta_\bk) \eta,\bp\cdot \bk \pa_\theta \eta\ran\\
&\quad-\beta v\zeta_\bk^2|\bk|\|\sin(\theta-\theta_\bk)\eta\|_2^2. 
\end{align}
Now if we sum the three terms together, we obtained the following bound for $T_\beta$
\begin{align}\label{Tbeta}
\qquad T_\beta
\leq& \frac{1}{16}\beta |\bk|\zeta_\bk^2\|\sin(\theta-\theta_\bk)\eta\|_2^2+16\beta \nu\|\pa_{\theta}\eta\|_{2}^2
+\al\zeta_\bk\nu^{3/2}|\bk|^{-1/2}\|\pa_\theta^2\eta\|_{2}^2\\
&+\frac{\beta^2}{\al}\nu^{1/2}|\bk|^{1/2}\zeta_\bk^3\lf(\|\sin(\theta-\theta_\bk)\pa_\theta \eta\|_2^2+\frac{1}{4}\|\eta\|_2^2\rg)- {\beta}v\zeta_\bk^2{|\bk|}\lf\| \sin (\theta-\theta_\bk) \eta\rg \|_2^2. 
\end{align}
Now the $T_\gamma$-term can be estimated as follows:
\begin{align}\label{Tga}\quad
T_\gamma
\leq&3\gamma\zeta_\bk^2\mathbbm{1}_{t\leq \nu^{-1/2}|\bk|^{-1/2}}|\bk|\|\sin(\theta-\theta_\bk)\eta\|_2^2 -\gamma\nu^{1/2}|\bk|^{1/2}\zeta_\bk^3\|\sin(\theta-\theta_\bk)\pa_{\theta }\eta\|_2^2\\
&+\gamma\nu^{1/2}|\bk|^{1/2}\zeta_\bk^3\|\eta\|_2^2.
\end{align} 
Now summing up the $T_{L^2}$-estimate \eqref{TL2}, $ T_\al$-estimate \eqref{Tal}, $T_\beta$-estimate \eqref{Tbeta}, $T_\gamma$-estimate \eqref{Tga} as in the decomposition \eqref{dtF}, we end up with the estimate
\begin{align}
\frac{d}{dt}F[\eta]\leq&\lf(-2+\al+\frac{2\al^2}{\beta}+16\beta\rg)\nu \|\pa_\theta \eta\|_2^2-\al\nu^{3/2}|\bk|^{-1/2}\zeta_\bk\|\pa_\theta ^2\eta\|_2^2\\
&+\lf(-v+\frac{1}{2}v^2+\frac{1}{16}+\frac{3\gamma}{\beta}\mathbbm{1}_{t\leq\nu^{-1/2}|\bk|^{-1/2}}\rg)\beta |\bk|\zeta_\bk^2\|\sin(\theta-\theta_\bk)\eta\|_2^2\\
&+\lf(-1+\frac{\beta^2}{\al\gamma}\rg)\gamma\nu^{1/2}|\bk|^{1/2}\zeta_\bk^3\|\sin(\theta-\theta_\bk)\pa_\theta \eta\|_2^2+\lf(\frac{\beta}{4\al}+\frac{\gamma}{\beta}\rg)\beta\zeta_\bk^2\nu^{1/2}|\bk|^{1/2}\|\eta\|_2^2.
\end{align} 
We choose $\al$, $\beta$, $\gamma$ as follows 
    \begin{align}\label{chc_al_ga}
\al = \frac{\beta^{1/2}}{4}, \quad\quad \gamma = 4\beta^{3/2},\quad\quad \beta\leq \frac{1}{4096} .
    \end{align}
Now we have that 
\begin{align*}
\frac{d}{dt}F[\eta]\leq&-\nu \|\pa_\theta \eta\|_2^2-\frac{1}{8}\beta |\bk|\zeta_\bk^2\|\sin(\theta-\theta_\bk)\eta\|_2^2+5\beta^{3/2}\zeta_\bk^2\nu^{1/2}|\bk|^{1/2}\|\eta\|_2^2.
\end{align*}
To bound the right-hand side in terms of the $F[\eta]$, we invoke the following spectral inequality
\begin{align}
\nu^{1/2}|\bk|^{1/2}\|\eta_\bk\|_{L_\theta^2}^2\leq\nu\|\pa_\theta \eta_\bk\|_{L_\theta^2}^2+\mathfrak{C}_{\text{spec}}|\bk|\|\sin(\theta-\theta_\bk)\eta_\bk\|_{L_\theta^2}^2.
\end{align}
The proof of the inequality can be found in Proposition 2.7 of \cite{BCZ15}, and Lemma 3.1 of \cite{CobleHe23}. 
As a result, we have 
\begin{align*}
\frac{d}{dt}F[\eta]\leq&-\frac{\nu}{2} \|\pa_\theta \eta\|_2^2+\lf(-\frac{1}{16\mathfrak{C}_{\text{spec}}}+5\beta^{1/2}\rg)\beta \nu^{1/2}|\bk|^{1/2}\zeta_\bk^2\|\eta\|_2^2-\frac{1}{16}\beta |\bk|\zeta_\bk^2\|\sin(\theta-\theta_\bk)\eta\|_2^2.
\end{align*}
Hence, we can choose
    \begin{align}
    \beta=\beta(\mathfrak{C}_{\mathrm{spec}})\leq\frac{1}{4096},\label{chc_beta}
    \end{align}
    small enough such that 
\begin{align}\label{energy_dissipation}
\frac{d}{dt}F[\eta]
\leq& -\delta( \mathfrak C_\text{spec})\nu^{1/2}|\bk|^{1/2}\mathbbm{1}_{t\geq \nu^{-1/2}|\bk|^{-1/2}}F[\eta].
\end{align}
Here in the last line, we have invoked the relation \eqref{eqv}. This concludes the proof.
\end{proof}

\begin{lem}[Linear Inviscid Damping] Consider solutions to the equation $\eqref{eq:bsc_k}_{|\bk|\neq0}$, then the following linear inviscid damping estimate holds: 
\begin{align}
    \|\eta_\nq(t)\|_{H^{-1}}\leq C\min\lf\{\frac{\nu^{1/4}}{\min\{1,\nu^{1/4}t^{1/2}\}},e^{-\delta_0\nu^{1/2}t}\rg\}\|\eta_\nq(0)\|_{H^1},\quad\forall t\geq0. \label{linear_ID}
\end{align}
Here, $C\geq 1$ is a universal constant, and the $\delta_0$ is defined in \eqref{ED}.  
\end{lem}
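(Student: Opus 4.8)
The plan is to argue mode by mode in $\bk$. For fixed $\bk\neq 0$ and $v\equiv 1$, write $b(\theta):=\bp(\theta)\cdot\bk/|\bk|=\cos(\theta-\theta_\bk)$, so that $\eqref{eq:bsc_k}_{|\bk|\neq0}$ reads $\pa_t\wh\eta_\bk+i|\bk|b(\theta)\wh\eta_\bk=\nu\pa_\theta^2\wh\eta_\bk$, and $b$ has exactly the two critical points $\theta_\bk,\theta_\bk+\pi$, both nondegenerate. I would first observe that $\nu^{1/4}/\min\{1,\nu^{1/4}t^{1/2}\}=\max\{t^{-1/2},\nu^{1/4}\}$, so it suffices to prove, separately: (i) $\|\wh\eta_\bk(t)\|_{H^{-1}_\theta}\leq C e^{-\delta_0\nu^{1/2}t}\|\wh\eta_{0;\bk}\|_{H^1_\theta}$ for all $t\geq 0$; (ii) $\|\wh\eta_\bk(t)\|_{H^{-1}_\theta}\leq C t^{-1/2}\|\wh\eta_{0;\bk}\|_{H^1_\theta}$ for $t\leq\nu^{-1/2}$; and (iii) $\|\wh\eta_\bk(t)\|_{H^{-1}_\theta}\leq C\nu^{1/4}\|\wh\eta_{0;\bk}\|_{H^1_\theta}$ for $t\geq 1$. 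Bound (i) is immediate from Theorem \ref{thm:led}, since $|\bk|\geq 1$ and $\|\cdot\|_{H^{-1}_\theta}\leq\|\cdot\|_{L^2}\leq\|\cdot\|_{H^1_\theta}$; bound (iii) follows from (ii) at $t=\nu^{-1/2}$ together with the basic energy identity (which makes $t\mapsto\|\wh\eta_\bk(t)\|_{L^2}$ nonincreasing); and (ii) is trivial for $t\leq 1$ because then $t^{-1/2}\geq 1$ and $\|\wh\eta_\bk(t)\|_{H^{-1}_\theta}\leq\|\wh\eta_\bk(t)\|_{L^2}\leq\|\wh\eta_{0;\bk}\|_{L^2}$. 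Thus the whole content is the inviscid-damping bound (ii) on the window $1\leq t\leq\nu^{-1/2}$. Summing the mode-by-mode bounds in $\bk$ (using that the $|\bk|$-dependence only improves the estimates) returns the stated bound for $\eta_\nq$.

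For (ii) I would exploit the oscillation of the transport. In the inviscid case $\nu=0$ the solution is explicit, $\wh\eta_\bk(t,\theta)=e^{-i|\bk|tb(\theta)}\wh\eta_{0;\bk}(\theta)$, so for any $\phi\in H^1_\theta$,
\begin{align*}
\langle\wh\eta_\bk(t),\phi\rangle=\int_{\mathbb T}e^{-i|\bk|tb(\theta)}\,\wh\eta_{0;\bk}(\theta)\,\overline{\phi(\theta)}\,d\theta ,
\end{align*}
and since $b$ has only nondegenerate critical points, the van der Corput lemma gives $|\langle\wh\eta_\bk(t),\phi\rangle|\leq C(|\bk|t)^{-1/2}\bigl(\|\wh\eta_{0;\bk}\overline\phi\|_{L^\infty}+\|\pa_\theta(\wh\eta_{0;\bk}\overline\phi)\|_{L^1}\bigr)\leq C(|\bk|t)^{-1/2}\|\wh\eta_{0;\bk}\|_{H^1_\theta}\|\phi\|_{H^1_\theta}$, hence $\|\wh\eta_\bk(t)\|_{H^{-1}_\theta}\leq C(|\bk|t)^{-1/2}\|\wh\eta_{0;\bk}\|_{H^1_\theta}\leq C t^{-1/2}\|\wh\eta_{0;\bk}\|_{H^1_\theta}$. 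To reinstate $\nu>0$ I would pass to the oscillation-removed variable $g(t,\theta):=e^{i|\bk|tb(\theta)}\wh\eta_\bk(t,\theta)$, which satisfies $\pa_t g=\nu(\pa_\theta-i|\bk|tb')^2 g$. Along this equation $\|g(t)\|_{L^2}=\|\wh\eta_\bk(t)\|_{L^2}$ is nonincreasing and one has the a priori dissipation $\nu\int_0^t\|(\pa_\theta-i|\bk|tb')g\|_{L^2}^2\,ds\leq\|\wh\eta_{0;\bk}\|_{L^2}^2$; combining this with a weighted energy estimate for $\pa_\theta g$ (with time weights of the same flavour as those in the hypocoercivity functional \eqref{F_fnct}) keeps the amplitude $g$ regular on $t\lesssim\nu^{-1/2}$. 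One then splits the $\theta$-integral $\langle\wh\eta_\bk(t),\phi\rangle=\int e^{-i|\bk|tb}g(t)\overline\phi\,d\theta$ into the mixing region $\{\,|b'(\theta)|=|\sin(\theta-\theta_\bk)|\geq(|\bk|t)^{-1/2}\,\}$, where integration by parts against $e^{-i|\bk|tb}$ gains $(|\bk|t)^{-1/2}$ after one $\theta$-derivative falls on $g$ and on $1/b'$, and the complementary region of measure $\lesssim(|\bk|t)^{-1/2}$, where the quadratic Fresnel structure of the phase near each critical point again yields the gain $(|\bk|t)^{-1/2}$; this reproduces (ii) with $g(t)$ in place of $\wh\eta_{0;\bk}$ and the viscous corrections absorbed. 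Assembling the three regimes and taking the minimum finishes the lemma.

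The main obstacle is precisely step (ii) once $\nu>0$: the free-transport propagator is unbounded on negative Sobolev spaces because the $\theta$-frequency content of $\wh\eta_\bk$ cascades like $|\bk|t$, so a naive Duhamel expansion around the inviscid flow fails to close (the error grows polynomially in $t$ and overtakes the $t^{-1/2}$ gain well before $t\sim\nu^{-1/2}$). One must instead propagate the conjugated variable $g$ and show that the dissipative smoothing degrades its regularity no faster than the inviscid mixing improves the weak norm, tracking the two quantitatively against each other — the analysis carried out for the analogous linearized models in \cite{AlbrittonOhm22,CotiZelatiDietertGerardVaret22,CotiZelatiDietertVaret24}, and the exact reason the sharp $t^{-1/2}$ rate is confined to the pre-diffusive window $t=\mathcal{O}(\nu^{-1/2})$ (after which the bound floors at $\nu^{1/4}$ and is eventually superseded by the enhanced dissipation of Theorem \ref{thm:led}).
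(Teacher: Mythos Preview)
Your reduction into regimes (i)--(iii) is the right bookkeeping, and (i) and the trivial part of (ii) are correct. There are, however, two genuine issues.

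\textbf{The argument for (iii) does not close.} From (ii) you know $\|\wh\eta_\bk(\nu^{-1/2})\|_{H^{-1}_\theta}\leq C\nu^{1/4}\|\wh\eta_{0;\bk}\|_{H^1_\theta}$, but monotonicity of the $L^2$ norm says nothing about the $H^{-1}$ norm at later times: the semigroup is not obviously a contraction on $H^{-1}_\theta$. You also cannot restart (ii) from a later time $s$, because (ii) needs $H^1$ data and you only control $\|\wh\eta_\bk(s)\|_{L^2}$. In the paper this regime is not handled separately at all; the vector-field estimate $\|\chi_\bk J_\bk\eta_\bk\|_{L^2}\leq C\|\wh\eta_{0;\bk}\|_{H^1}$ holds for \emph{all} $t$, and the final splitting yields $\nu^{1/4}/\min\{1,\nu^{1/4}t^{1/2}\}$ directly for every $t\geq 1$.

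\textbf{The conjugation variable $g$ is the right intuition but not the right object.} Writing $g=e^{i|\bk|tb}\wh\eta_\bk$, one has $\pa_\theta g=e^{i|\bk|tb}\bigl(\pa_\theta\wh\eta_\bk - i|\bk|t\sin(\theta-\theta_\bk)\wh\eta_\bk\bigr)$, which is exactly (up to a unimodular factor) the vector field $J_\bk\wh\eta_\bk$ \emph{in the short-time regime} $t\ll\nu^{-1/2}|\bk|^{-1/2}$, where $A_\bk\approx 1$ and $B_\bk\nu^{-1/2}|\bk|^{1/2}\approx |\bk|t$. The crucial feature of the paper's construction is that the coefficients $A_\bk^\pm,B_\bk^\pm$ \emph{saturate}: for $t\gtrsim\nu^{-1/2}|\bk|^{-1/2}$ one has $B_\bk\nu^{-1/2}|\bk|^{1/2}\approx \nu^{-1/2}|\bk|^{1/2}$, a constant, not $|\bk|t$. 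This saturation is precisely what makes the commutator terms in \eqref{J_eta_eq} controllable by the enhanced-dissipation functional via \eqref{ED_result}, and it is what lets $\|\chi_\bk J_\bk\eta_\bk\|_{L^2}$ stay bounded uniformly in time. Your $\pa_\theta g$ has no such saturation, so the ``weighted energy estimate for $\pa_\theta g$'' you allude to will not close on the full window $t\leq\nu^{-1/2}$: the coefficient $|\bk|t$ outruns the dissipative gain, which is exactly the obstruction you identify in your last paragraph. The paper's cure is not to balance two growing quantities, but to replace the na\"ive conjugated derivative by the saturating vector fields $J_\bk^\pm$ of \cite{CotiZelatiDietertVaret24}, localized by cutoffs $\chi_\bk$ near each critical point, and then to run the $\epsilon$-splitting with $\epsilon=(\nu^{1/2}/\min\{\nu^{1/2}t,1\})^{1/2}$.
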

\begin{proof}
    
We decompose the proof into three steps. The basic ideas of the proof are from a series of works \cite{CotiZelatiDietertGerardVaret22,CotiZelatiDietertVaret24}. Since our setting differs from theirs and the quantitative behavior is also different, we provide the details here.

\noindent
{\bf Step \# 1: Preliminaries: }First of all, we observe that it is enough to consider the $t\geq 1$ case because within the time interval $t\in[0,1]$, the estimate \eqref{linear_ID} is a direct consequence of the relation $\|\eta_\nq\|_{H^{-1}}\leq C\|\eta_\nq\|_{L^2}$ and the nonexpansive nature of the quantity $\|\eta_\nq\|_{L^2}$. The key mathematical object that leads to the inviscid damping estimate is the following vector fields (adapted to each $\bx$-Fourier mode) introduced on page 11 of  \cite{CotiZelatiDietertVaret24}:
\begin{align}
    \begin{cases}\begin{cases}&J_\bk^+ \eta_\bk= A_\bk^+(t)\pa_\theta \eta_\bk+i\lf(\nu|\bk|^{-1}\rg)^{-1/2}B_\bk^+(t)\pa_\theta\bp(\theta) \cdot \frac{\bk}{|\bk|} \eta_\bk\\
    &\hspace{1.05 cm}=A_\bk^+(t)\pa_\theta \eta_\bk-i\lf(\nu|\bk|^{-1}\rg)^{-1/2}B_\bk^+(t)\sin(\theta-\theta_\bk) \eta_\bk,\\
    &  A_\bk^+(t)=\frac{1}{2}\lf(1+e^{-2(1-i)\sqrt{\nu|\bk|}t}\rg),\quad B_\bk^+(t)=\frac{1+i}{4}\lf(1-e^{-2(1-i)\sqrt{\nu|\bk|}t}\rg);\end{cases}\\
    \begin{cases}
    &J_\bk^- \eta_\bk= A_\bk^-(t)\pa_\theta \eta_\bk+i\lf(\nu|\bk|^{-1}\rg)^{-1/2}B_\bk^-(t)\pa_\theta\bp(\theta) \cdot \frac{-\bk}{|\bk|} \eta_\bk\\
    &\hspace{1.05cm}=A_\bk^-(t)\pa_\theta \eta_\bk-i\lf(\nu|\bk|^{-1}\rg)^{-1/2}B_\bk^-(t)\sin(\theta- \theta_\bk^-) \eta_\bk,\quad  \theta_\bk^- :=\theta_\bk+\pi(\text{mod }2\pi),\\
     &A_\bk^-(t)=\frac{1}{2}\lf(1+e^{-2(1+i)\sqrt{\nu|\bk|}t}\rg),\quad B_\bk^-(t)=\frac{i-1}{4}\lf(1-e^{-2(1+i)\sqrt{\nu|\bk|}t}\rg).
     \end{cases}\end{cases}
\end{align}
Here, the augmented angle $\theta_\bk$ is  defined in \eqref{theta_k}. %
The motivation for designing these vector fields is to approximate the vector field $\pa_\theta+it\pa_\theta(\bp\cdot \bk)$ that commutes with the inviscid dynamics ($\nu=0$). However, thanks to the diffusion term $\nu\pa_\te^2$, the commutator between the vector fields $J^\pm_\bk$ and the equation is nontrivial. However, by carefully tuning the coefficients, it can be guaranteed that these commutators vanish at one of the critical points of the function $\bp(\cdot)\cdot\bk\in C^\infty(\Torus)$. Then, one can use structures of the enhanced dissipation functional to control these commutators in a suitable domain. It turns out that this is sufficient to derive mixing. We observe that \begin{align}|A_\bk^\pm(t)|\approx 1,\quad |B_\bk^\pm(t)|\approx \min\{\nu^{1/2}|\bk|^{1/2}t,1\}=\zeta_\bk(t). \label{ab_equiv}
\end{align}To simplify the notation, we define $\wh {\bk}=\bk/|\bk|$ as in \cite{CotiZelatiDietertVaret24}. For the vector field $J_\bk$, there is an associated cutoff function $\chi_\bk$. It is a smooth cutoff function that is $1$ near $\theta_\bk$ and $0$ near $\theta_\bk+\pi(\text{mod\ } 2\pi)$. Through direct computation, we obtain the following equation for the $J_\bk \eta_\bk$:
\begin{align}
   \label{J_eta_eq}\hspace{1cm}\pa_tJ_\bk^+ \eta_\bk+i\bp\cdot \bk J_\bk^+\eta_\bk&=\nu\pa_{\te}^2 J_\bk^+\eta_\bk-|\bk|(1-i)(\nu|\bk|^{-1})^{1/2}J_\bk^+ \eta_\bk \\
    &+2iB_\bk^+\nu^{1/2}|\bk|^{1/2}\pa_\te((\bp\cdot \wh\bk-1)\eta_\bk)+iB_\bk^+\nu^{1/2}|\bk|^{1/2}\sin(\te-\te_\bk)\eta_\bk;\\
\label{wt_J_eta_eq}    \pa_tJ_\bk^- \eta_\bk+i\bp\cdot \bk  J_\bk^-\eta_\bk&=\nu\pa_{\te}^2 J_\bk^-\eta_\bk-|\bk|(1+i)(\nu|\bk|^{-1})^{1/2}J_\bk^- \eta_\bk \\
    &-2iB_\bk^-\nu^{1/2}|\bk|^{1/2}\pa_\te((\bp\cdot \wh\bk+1)\eta_\bk)+iB_\bk^-\nu^{1/2}|\bk|^{1/2}\sin(\te-\te_\bk^-)\eta_\bk.
\end{align}
We note that the last two terms on the right-hand side of the equation \eqref{J_eta_eq} and \eqref{wt_J_eta_eq} vanish at point $\theta=\theta_\bk$ and $\theta=\theta_\bk^-$, respectively. Since the two equations have similar structures, we only focus on \eqref{J_eta_eq}. Moreover, we drop the superscript $(\cdots)^{+}$ in $J_\bk^+,\ A_\bk^+,\ B_\bk^+$ to simplify the notations. This concludes the step.

\noindent
{\bf Step \# 2: Estimate of the vector fields: }
Now we define two smooth cutoff functions $\chi_\bk, \ \wt{\chi}_\bk\in C^\infty(\Torus)$ supported around $\wh \bk$ such that both of them are zero near $-\bk/|\bk|$ and $|\chi_\bk|+|\pa_\te \chi_\bk|\leq C\wt{\chi}_\bk\leq C$, as in the proof of Lemma 3.4 in \cite{CotiZelatiDietertVaret24}. Moreover, the norms of these cutoffs are independent of $\bk. $ One can do the same for the vector field $ J_\bk^-.$ Now we implement the $L^2$-energy estimate of the truncated quantity $\chi_\bk J_\bk \eta_\bk$:
\begin{align}\label{D2T3}
    \frac{1}{2}\frac{d}{dt}\|\chi_\bk J_\bk \eta_\bk\|_{L^2}^2\leq&-\nu\|\chi_\bk \pa_\te J_\bk\eta_\bk\|_{L^2}^2-\nu^{1/2}|\bk|^{1/2}\|\chi_{\bk}J_\bk\eta_\bk\|_{L^2}^2\\
    &+2\nu\|\chi_\bk\pa_\te J_\bk\eta_\bk\|_{L^2}\|\pa_\te\chi_\bk J_\bk\eta_\bk\|_{L^2}\\
    &+2|B_\bk|\nu^{1/2}|\bk|^{1/2}\int \lf|\pa_\te(J_\bk \eta_\bk\chi_\bk^2)\rg|\lf|(\bp\cdot\wh \bk-1)\eta_\bk\rg|d\te\\
    &+|B_\bk|\nu^{1/2}|\bk|^{1/2}\int \lf|J_\bk \eta_\bk\chi_\bk^2\sin(\te-\te_\bk)\eta_\bk\rg|d\te\\
   =: &-D_1-D_2+T_1+T_2+T_3.
\end{align}
Thanks to the relation \eqref{energy_dissipation}, we have that 
\begin{align}\label{ED_result}
&\int_0^{\infty} \nu \|\pa_\theta \eta_\bk\|_2^2+\nu^{1/2}|\bk|^{1/2}\zeta_\bk^2\|\eta_\bk\|_2^2+ |\bk| \zeta_\bk^2\|\sin(\theta-\theta_\bk)\eta_\bk\|_2^2 dt \\
&\qquad\leq  C(\mathfrak C_\text{spec}) F[\eta_{\bk}(0)]=C(\mathfrak C_\text{spec})\|\eta_{\bk}(0)\|_{L^2}^2.
\end{align}
Next, we use this time integrability condition to estimate the right-hand side of \eqref{D2T3}. For the $T_1$-term, we estimate it with the relation $J_\bk=A_\bk\pa_\te-i B_\bk(\nu|\bk|^{-1})^{-1/2}\sin(\te-\te_\bk)$ and the equivalence \eqref{ab_equiv} as follows:
\begin{align}\label{T_1_est}
     &\|T_1\|_{L_t^1}\leq\frac{1}{4}\|D_1\| _{L_t^1}+C\nu\int\lf( |A_\bk|^2\|\pa_\te \eta_\bk\|_{L_\te^2}^2+ |B_\bk|^2(\nu|\bk|^{-1})^{-1}\|\sin(\te-\te_\bk)\eta_\bk\|_{L_\te^2}^2\rg)dt\\
    &\leq \frac{1}{4}\|D_1\| _{L_t^1}+C \int\lf(  \nu\|\pa_\te \eta_\bk\|_{L_\te^2}^2+ \zeta_\bk^2|\bk|\|\sin(\te-\te_\bk)\eta_\bk\|_{L_\te^2}^2\rg)dt\leq \frac{1}{4}\|D_1\| _{L_t^1}+C\|\eta_\bk(0)\|_{L_\te^2}^2.
\end{align}
In the last line, we use the estimate \eqref{ED_result}. 

Next we estimate the $T_2$-term in \eqref{D2T3} with the fact that on the support of $\chi_\bk,$ $|\bp(\te)\cdot \wh \bk-1|\leq C|\pa_\te(\bp(\te)\cdot \wh \bk)|=C|\sin(\te-\te_\bk)|$:
\begin{align}\label{T_2_est}
    &\|T_2\|_{L_t^1}\leq 2\int \zeta_\bk\nu^{1/2}|\bk|^{1/2}\int  \chi_\bk\Big(|\pa_\te J_\bk \eta_\bk|  \chi_\bk+|\pa_\te\chi_\bk J_\bk \eta_\bk|\Big)|\sin(\te-\te_\bk)||\eta_\bk|d\te dt\\
    &\leq 2\int\zeta_\bk\nu^{1/2}|\bk|^{1/2}\|\chi_\bk\pa_\te J_\bk\eta_\bk\|_{L_\te^2}\|\sin(\te-\te_\bk)\eta_\bk\|_{L_\te^2}dt\\
    &\quad+C\nu^{1/2}\int  \zeta_\bk\Big(|A_\bk|\|\pa_\te  \eta_\bk\|_{L_\te^2}+|B_\bk|\nu^{-1/2}|\bk|^{1/2}\|\sin(\te-\te_\bk)\eta_\bk\|_{L_\te^2}\Big)\||\bk|^{1/2}\sin(\te-\te_\bk)\eta_\bk\|_{L_\te^2} dt\\
    &\leq \frac{1}{4}\|D_1\|_{L_t^1}+C  \int\lf(  \nu\|\pa_\te \eta_\bk\|_{L_\te^2}^2+ \zeta_\bk^2|\bk|\|\sin(\te-\te_\bk)\eta_\bk\|_{L^2}^2\rg)dt\leq \frac{1}{4}\|D_1\| _{L_t^1}+C\|\eta_\bk(0)\|_{L^2}^2.
    \end{align}

Finally, the $T_3$-term in \eqref{D2T3} can be estimated in the same fashion as $T_2$:
\begin{align}\label{T_3_est}
    &\|T_3\|_{L_t^1}
    \leq C  \int\lf(  \nu\|\pa_\te \eta_\bk\|_{L_\te^2}^2+ \zeta_\bk^2|\bk|\|\sin(\te-\te_\bk)\eta_\bk\|_{L^2}^2\rg)dt
    \leq C\|\eta_\bk(0)\|_{L^2}^2.
\end{align}
Now we integrate the expression \eqref{D2T3} in time to get 
\begin{align}\|J_\bk \eta_\bk(t) \chi_\bk\|_{L^2}\leq C\|\eta_\bk(0)\|_{H^1}.\label{vec_ctrl}
\end{align}

\noindent
{\bf Step \# 3: Proof of mixing. } We follow the idea of the proof of Proposition 1.7 in the paper \cite{CotiZelatiDietertGerardVaret22}. For a general test function $F\in H^1(\Torus^3),$ we can rewrite the expression $\iint \eta_\nq F d\bx d\te$ as 
\begin{align}
\sum_{\bk\neq(0,0)}\int \eta_\bk (t,\te)\overline{F_\bk(\te)} d\te=&\sum_{\bk\neq(0,0)}\lf(\int \eta_\bk (t,\te)\overline{F_\bk(\te)}\chi_\bk  d\te+\int \eta_\bk (t,\te)\overline{F_\bk(\te)}(1-\chi_\bk)  d\te\rg)\\
=:&\sum_{\bk\neq (0,0)}\lf(I_{\bk;+}+I_{\bk;-}\rg).
\end{align}
 As explained in the paper \cite{CotiZelatiDietertGerardVaret22}, a symmetry consideration yields that it is enough to consider the first part of the expression. For the second part, one can use the vector field $ J_\bk^-$ and associated cutoffs to derive similar estimates. One can introduce another cutoff function $\chi_{\bk;\ep}$ such that it is $1$ in an $\ep$-neighborhood of $\theta_\bk$. Moreover, $\|\pa_\te \chi_{\bk;\ep}\|_{L^\infty}\leq C\ep^{-1}$. With this cutoff, we can further decompose the $I_{\bk;+}$ as follows
\begin{align}
   I_{\bk;+}=& \int_\Torus \eta_\bk \overline{F_\bk} \chi_{\bk;\ep}\chi_{\bk} d\te+\int_\Torus \eta_\bk \overline{F_\bk} (1-\chi_{\bk;\ep})\chi_{\bk} d\te
   =:I_{\bk;+}^{(1)}+I_{\bk;+}^{(2)}.
\end{align}
For the $I_{\bk;+}^{(1)}$-term, we estimate it using the length of the interval, the observation that $\|\eta_\bk(t)\|_{L^\infty}\leq \|\eta_\bk(0)\|_{L^\infty}$ and the Sobolev embedding:
\begin{align*}
    |I_{\bk;+}^{(1)}|\leq C\ep \|F_\bk\|_{L^\infty}\|\eta_\bk(0)\|_{L^\infty}\leq C\ep\|F_\bk\|_{H^1}\|\eta_\bk(0)\|_{H^1}.
\end{align*}
Next we estimate the $I_{\bk;+}^{(2)}$ term using the observation that $|\sin(\theta-\theta_\bk)|>0$ on this interval and $(\nu|\bk|^{-1})^{1/2}(J_\bk \eta_\bk-A_\bk\pa_\te \eta_\bk)=-iB_\bk\sin(\theta-\theta_\bk)\eta_\bk$:
\begin{align}
    &|I_{\bk;+}^{(2)}|=\lf|\int_\Torus \sin^2(\te-\te_\bk)\eta_\bk\frac{\overline{F_\bk}}{\sin^2(\te-\te_\bk)} (1-\chi_{\bk;\ep})\chi_{\bk} d\te\rg|\\
   & \leq\nu^{1/2}|\bk|^{-1/2}\lf|\int_\Torus \sin(\te-\te_\bk)\frac{J_\bk\eta_\bk-A_\bk\pa_\te\eta_\bk}{-i B_\bk}\frac{\overline{F_\bk}}{\sin^2(\te-\te_\bk)}(1-\chi_{\bk;\ep})\chi_{\bk} d\te\rg| \\
   &\leq \int_\Torus \nu^{1/2}|\bk|^{-1/2} |\sin(\te-\te_\bk)|\lf|\frac{J_\bk\eta_\bk}{B_\bk}\rg|\lf|\frac{\overline{F_\bk}}{ \sin^2(\te-\te_\bk)} \rg| (1-\chi_{\bk;\ep})\chi_{\bk} d\te\\
    &\quad+\nu^{1/2}|\bk|^{-1/2}\lf|\frac{A_\bk}{B_\bk}\rg| \lf|\int_\Torus\sin(\te-\te_\bk)\pa_\te\eta_\bk \frac{\overline{F_\bk}}{\sin^2(\te-\te_\bk)} (1-\chi_{\bk;\ep})\chi_{\bk} d\te\rg|=:T_4+T_5.
\end{align}
For the $T_4$-term, we estimate it with the Sobolev embedding and \eqref{ab_equiv} 
as follows:\begin{align}
    T_4\leq& \frac{C\nu^{1/2}}{|\bk|^{1/2}|B_\bk|}\|F_\bk\|_{L^\infty} \int_{|\te-\te_\bk|\geq \ep} \frac{|J_\bk \eta_\bk| }{|\sin(\te-\te_\bk)|} d\te 
    \leq  \frac{C\nu^{1/2}}{|\bk|^{1/2}\min\{\nu^{1/2}|\bk|^{1/2}t,1\}}\|F_\bk\|_{H^1}\|J_\bk\eta_\bk\|_{L^2}\ep^{-1/2}\\
    \leq &\frac{C\nu^{1/2}}{ |\bk|^{1/2}\min\{\nu^{1/2} t,1\}}\|F_\bk\|_{H^1}\|J_\bk\eta_\bk\|_{L^2}\ep^{-1/2},\quad \forall t\geq 1.
\end{align}
To estimate the second term $T_5$, we recall the relation \eqref{ab_equiv}, implement integration by parts, and estimate each resulting term as follows,
\begin{align}
&T_5\leq \lf|\frac{A_\bk}{B_\bk}\rg| \frac{\nu^{1/2}}{|\bk|^{1/2}}\lf|\int_\Torus\eta_\bk\pa_\te\lf( \frac{\overline{F_\bk}}{\sin (\te-\te_\bk)} (1-\chi_{\bk;\ep})\chi_{\bk}\rg) d\te\rg|\\
&\leq  \frac{C\nu^{1/2}}{|\bk|^{1/2}\min\{\nu^{1/2}t,1\} }\|\eta_\bk\|_{L^\infty}\|F_\bk\|_{H^1}\lf(\int \frac{(1-\chi_{\bk;\ep})\chi_{\bk}}{|\sin^2(\te-\te_\bk)|} d\te+\int \frac{ |\pa_\te\chi_{\bk;\ep}|\chi_\bk +|\pa_\te\chi_\bk|(1-\chi_{\bk;\ep})}{\lf|\sin(\te-\te_\bk)\rg|}d\te\rg)\\
&\leq  \frac{C\nu^{1/2}}{|\bk|^{1/2}\min\{\nu^{1/2} t,1\}}\|\eta_\bk(0)\|_{H^1}\|F_\bk\|_{H^1}\frac{1}{\ep}.
\end{align}
Hence, we observe that if we set $\ep=\left(\frac{\nu^{1/2}}{\min\{\nu^{1/2} t,1\}}\right)^{1/2}$ and invoke the bound \eqref{vec_ctrl}, the following estimate holds
\begin{align}
    |I_{\bk;+}|\leq\frac{C\nu^{1/4}}{ \min\{\nu^{1/4} t^{1/2},1\}}\|\eta_\bk(0)\|_{H^1}\|F_\bk\|_{H^1},\quad \forall t\geq 1.
\end{align}
Summing all the $\bk$-components, we obtain that 
\begin{align}\label{mix}
\lf|\sum_{\bk\neq(0,0)}\int \eta_\bk (t,\te)\overline{F_\bk(\te)} d\te\rg|\leq& \sum_{\bk\neq (0,0)}\frac{C \nu^{1/4}}{ \min\{\nu^{1/4}t^{1/2},1\}}\|\eta_\bk(0)\|_{H^1}\|F_\bk\|_{H^1}\\
&\leq \frac{C\nu^{1/4}}{\min\{\nu^{1/4}t^{1/2},1\}}\|\eta_\nq(0)\|_{H^1}\|F_\nq\|_{H^1}.
\end{align}
Moreover, thanks to the enhanced dissipation estimate \eqref{ED}, we have the following estimate for all time
\begin{align}
\lf|\sum_{|\bk|\neq 0}\int\eta_\bk(t,\te)\overline{F_\bk(\te)}d\te\rg|\leq C\|\eta_\nq(0)\|_{L^2}\exp\{-\delta_0\nu^{1/2}t\}\|F_\nq\|_{L^2}.\label{ed}
\end{align}
Combining the estimates \eqref{mix} and \eqref{ed}, we obtain the result.

\end{proof}
\section{Connection between Different Formulations}
In the paper \cite{FrouvelleLiu12}, the authors consider the following equation
\begin{align*}
\pa_t g=-\na_\bp\cdot((I_d-\bp\otimes \bp)J[g]g)+\tau\de_\bp g ,\quad \bp\in \mathbb{S}.
\end{align*}
Here $\na_\bp\cdot$ is the tangential divergence of the vector field and $\de_\bp$ is the Laplace-Beltrami operator. The operator $J[g]$ is defined  as follows:
\begin{align}
J[g]=\int_{\mathbb{S}}\bp g(\cdot, \bp)d\bp.
\end{align}
We would like to show that this is actually equivalent to our equation \eqref{effctv_dym}. First of all, we observe that the Laplace-Beltrami operator is equivalent to $\pa_{\theta}^2 g$. Next, we have that 
\begin{align}
J[g]=\int_{-\pi}^\pi \binom{\cos\theta'}{\sin\theta'} g(\cdot, \theta')d\theta'.
\end{align} 
Next we recall that on the unit circle $\na_\bp\cdot (F_1,F_2)=-\sin\theta\pa_\theta F_1+\cos\theta \pa_\theta F_2.$ Hence,
\begin{align}
-\na_\bp&\cdot ((I_d-\bp\otimes\bp) J[g]g)\\
=&-\na_\bp\cdot\lf(
\begin{pmatrix}
   \sin^2\theta& -\sin\theta\cos\theta \\
  -\sin\theta\cos \theta & \cos^2\theta \\
\end{pmatrix}\int_{-\pi}^\pi \binom{\cos\theta'}{\sin\theta'} g(t, \theta')d\theta'  g(t,\theta)\rg)\\
=&-\na_\bp\cdot\lf(
\int_{-\pi}^\pi \begin{pmatrix}
   \sin^2\theta\cos\theta' -\sin\theta\cos\theta \sin\theta' \\
  -\sin\theta\cos \theta\cos\theta'  +\cos^2\theta\sin\theta' \\
\end{pmatrix}g(t, \theta')d\theta'  g(t,\theta)\rg)\\
=&-\binom{-\sin\theta\pa_\theta}{\cos\theta\pa_\theta}\cdot\lf(
\int_{-\pi}^\pi \begin{pmatrix}
   -\sin\theta  \\
   \cos \theta \\
\end{pmatrix}\sin(\theta'-\theta)g(t, \theta')d\theta'  g(t, \theta)\rg)\\
=&\pa_\theta\lf(\int_{-\pi}^\pi \sin(\theta-\theta')g(t, \theta')d\theta'  g(t, \theta)\rg).
\end{align}
Combining all the computations above, we have that the equation analyzed in \cite{FrouvelleLiu12} is identical to the \eqref{effctv_dym} modulo changes in parameters. 

\section{The Sobolev Estimates of the Solution}
In this section, we use the multi-index notation 
\begin{align}
\pa_\bx^{i}\pa_\te^j=\pa_{x_1}^{i_1}\pa_{x_2}^{i_2}\pa_\te^j,\quad i=(i_1,i_2),\quad |i|=i_1+i_2,\quad |i,j|=i_1+i_2+j,\quad\binom{i}{i'}=\binom{i_1}{i_1'}\binom{i_2}{i_2'}.
\end{align}
Moreover, we denote $i'\leq i$ if $i_1'\leq i_1$ and $i_2'\leq i_2$.
We derive the following lemma.
\begin{lem}
Consider the solution $f$ to the equation \eqref{eq:bsc} initiated from data $f_0\in H^M_{\bx,\te}$. Assume that the $L^2$-norm of the solution is bounded, i.e.,
\begin{align}\label{L2_bnd}
\|f\|_{L^2_{\bx,\theta}}^2\leq C(1+\|f_0\|_{L_{\bx,\te}^2}^2)\nu^{-1/6}.
\end{align}
Then the following estimate  holds
\begin{align}
\label{Hn_bnd}
\|\pa_\bx^i \pa_\te^j f\|_{L^2}^2\leq& C \frac{\max\{1,\|f_0\|_{H_{\bx,\theta}^n}^2\}}{\nu^{4j/3+1/6}} \exp\lf\{C\nu^{2/3} t\rg\},\quad |i,j|=n\in\{1,2,\cdots, M\}.
\end{align}
Here the constant $C$ depends on the norm $\|\Phi\|_{W^{M,\infty}},\, \|\Psi\|_{W^{M,\infty}},\, i,\, j.$
\end{lem}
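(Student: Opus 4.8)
The plan is to run an \emph{anisotropically weighted} energy estimate over all mixed derivatives of a fixed total order $n$, where a term carrying $j$ angular derivatives is given the weight $\nu^{4j/3}$ (spatial derivatives get weight one). For $n\in\{1,\dots,M\}$ set
\[
\mathcal E_{\le n}(t):=\sum_{|i,j|\le n}\nu^{4j/3}\,\big\|\pa_\bx^{i}\pa_\theta^{j}f(t)\big\|_{L^2_{\bx,\theta}}^2 .
\]
I would prove, via Gr\"onwall, that $\mathcal E_{\le n}(t)\le C_n\max\{1,\|f_0\|_{H^n}^2\}\,\nu^{-1/6}\exp\{C_n\nu^{2/3}t\}$, after which \eqref{Hn_bnd} is immediate since $\nu^{4j/3}\|\pa_\bx^i\pa_\theta^j f\|_{L^2}^2\le\mathcal E_{\le n}$. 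The case $n=0$ is precisely the hypothesis \eqref{L2_bnd}, which serves as the base input; throughout, $0<\nu,\kappa\le1$, and, as elsewhere in the paper, $\kappa\le\nu^{5/6}$, so that $\kappa^2/\nu\le\nu^{2/3}$. This last fact is what produces the Gr\"onwall rate $\nu^{2/3}$ and, as will be seen, it also forces the exponent $4/3$ in the weight.

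Applying $\pa_\bx^i\pa_\theta^j$ to \eqref{eq:bsc_1}, pairing with $\pa_\bx^i\pa_\theta^j f$ in $L^2_{\bx,\theta}$ and integrating by parts, the diffusion produces $-\nu\|\pa_\bx^i\pa_\theta^{j+1}f\|_{L^2}^2$, the top-order transport term $v\,\bp\cdot\na_\bx(\pa_\bx^i\pa_\theta^j f)$ integrates to zero because $\na_\bx\cdot\bp=0$, and there remain (i) the commutator $[\pa_\theta^j,\,v\bp\cdot\na_\bx]\pa_\bx^i f=v\sum_{l\ge1}\binom jl(\pa_\theta^l\bp)\cdot\na_\bx\pa_\bx^i\pa_\theta^{j-l}f$, and (ii), after one integration by parts in $\theta$, the nonlinear term $\kappa\int\pa_\bx^i\pa_\theta^{j}(fL[f])\,\pa_\bx^i\pa_\theta^{j+1}f\,d\bx\,d\theta$. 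For (ii) I would Leibniz-expand $\pa_\bx^i\pa_\theta^{j}(fL[f])$ and use that every derivative falling on $L[f]$ lands on the smooth kernel, so that $\|\pa_\bx^a\pa_\theta^bL[f]\|_{L^\infty}\le\|\pa_\bx^a\Phi\|_{L^\infty}\|\pa_\theta^b\Psi\|_{L^\infty}\|f\|_{L^1_{\bx,\theta}}\le C$; Young's inequality against the dissipation (with parameter $\nu$) then controls (ii) by $\tfrac\nu2\|\pa_\bx^i\pa_\theta^{j+1}f\|_{L^2}^2+\tfrac{C\kappa^2}{\nu}\sum_{i'\le i,\,j'\le j}\|\pa_\bx^{i'}\pa_\theta^{j'}f\|_{L^2}^2$. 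Since $\kappa^2/\nu\le\nu^{2/3}$ and $j'\le j$ (hence $\nu^{4j/3}\le\nu^{4j'/3}$), after multiplication by the weight $\nu^{4j/3}$ every summand with $(i',j')\ne(0,0)$ is $\le C\nu^{2/3}\mathcal E_{\le n}$, and the single $(0,0)$ summand is bounded by \eqref{L2_bnd}, contributing the fixed source $C(1+\|f_0\|_{L^2}^2)\nu^{1/2}=C(1+\|f_0\|_{L^2}^2)\nu^{2/3}\nu^{-1/6}$.

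The delicate term is the commutator (i): it converts an angular derivative into a spatial one, and spatial derivatives are not damped by the $\theta$-diffusion, so a priori the hierarchy need not close. The resolution is that the offending factor $\na_\bx\pa_\bx^i\pa_\theta^{j-l}f$ has spatial order $|i|+1$ but angular order only $j-l\le j-1$ and total order $|i|+1+(j-l)=n+1-l\le n$, so it is already one of the constituents of $\mathcal E_{\le n}$. Bounding each summand of (i) by $C\|\na_\bx\pa_\bx^i\pa_\theta^{j-l}f\|_{L^2}\|\pa_\bx^i\pa_\theta^j f\|_{L^2}$ (with $|v|,\|\pa_\theta^l\bp\|_{L^\infty}\le1$) and using Young with parameter $\nu^{2/3}$, the first resulting piece is $\le\tfrac12\nu^{2/3}\nu^{4j/3}\|\pa_\bx^i\pa_\theta^j f\|_{L^2}^2\le\tfrac12\nu^{2/3}\mathcal E_{\le n}$ after weighting, while the second is $C\nu^{4j/3-2/3}\|\na_\bx\pa_\bx^i\pa_\theta^{j-l}f\|_{L^2}^2$; invoking $\nu^{4j/3-2/3}\le\nu^{2/3}\nu^{4(j-l)/3}$, valid precisely because $l\ge1$ and saturated at $l=1$ — which is exactly what pins the weight exponent to $4/3$ — this too is $\le C\nu^{2/3}\mathcal E_{\le n}$. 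Summing over all $|i,j|\le n$ (including the clean cases $j=0$, where there is no commutator and the transport term drops altogether) gives
\[
\frac{d}{dt}\mathcal E_{\le n}\le C_n\nu^{2/3}\mathcal E_{\le n}+C_n(1+\|f_0\|_{L^2}^2)\nu^{1/2},
\]
and since $\mathcal E_{\le n}(0)\le C_n\|f_0\|_{H^n}^2$, Gr\"onwall's inequality together with $\nu^{-1/6}\ge1$ yields the claimed bound. The only non-algebraic point is to justify these manipulations for a genuine solution, which follows from local well-posedness in $H^M$ and a standard continuation/mollification argument.
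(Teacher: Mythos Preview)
Your proof is correct and arrives at the same conclusion, but the route differs from the paper's. The paper runs a \emph{double induction}: an outer induction on the total order $n$, and within each level $n$ a sub-induction on the angular order $j$, starting from $j=0$ (where the transport commutator is absent) and stepping up in $j$ by solving a Gr\"onwall-type ODE at each stage, feeding in the estimate just obtained at order $j-1$. You instead package all mixed derivatives of total order $\le n$ into a single anisotropically weighted functional $\mathcal E_{\le n}=\sum\nu^{4j/3}\|\pa_\bx^i\pa_\theta^j f\|_{L^2}^2$ and close one Gr\"onwall inequality on it. The weight $\nu^{4j/3}$ is precisely tuned so that the transport commutator, which trades one $\theta$-derivative for one $\bx$-derivative, maps back into $\mathcal E_{\le n}$ with a factor $\nu^{2/3}$; your observation that equality $4j/3-2/3=2/3+4(j-l)/3$ is saturated at $l=1$ is exactly what pins the exponent, and this makes the origin of the $\nu^{-4j/3}$ loss in \eqref{Hn_bnd} transparent. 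Your approach is more compact and avoids the iterated ODE bookkeeping; the paper's layered induction, by contrast, makes the hierarchy of estimates explicit without having to guess the correct weight in advance. Both rely on the same two ingredients: the bound $\|\pa_\bx^a\pa_\theta^b L[f]\|_{L^\infty}\le C\|f\|_{L^1}$ (all derivatives land on the smooth kernel) and the parameter relation $\kappa^2/\nu\le\nu^{2/3}$.
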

\begin{proof} We decompose the proof into several steps. 

\noindent
{\bf Step \# 1: Setup. }
We apply the induction argument to derive the bound \eqref{Hn_bnd}. The $n=0$ case is a natural consequence of the assumption \eqref{L2_bnd} and the constraint $\kappa\leq C\nu^{5/6}$. 
Assuming that \eqref{Hn_bnd} holds on the  $(n-1$)-th level, we would like to show that the estimate \eqref{Hn_bnd} holds. Thanks to the distinction between derivatives in $\bx$ and $\te$, we do another subinduction step. We will start with the case $|i|=n$, and sequentially increase the order of derivatives in $\theta$  and decrease the order of derivatives in $\bx$.

\noindent
{\bf Step \# 2: Induction. } 
We implement the $\dot H^{n}$-energy estimates with $|i|=n$,
\begin{align}
\frac{1}{2}&\frac{d}{dt}\sum_{|i|=n}\|\pa_\bx^i f \|_{L^2}^2
=-\nu\sum_{|i|=n}\|\pa_\theta \pa_\bx^{i} f\|_{L^2}^2+\kappa\sum_{|i|=n}\int\pa_\theta \pa_\bx^i f \  \pa_\bx^i (f L [f])\\
=& -\nu\sum_{|i|=n}\|\pa_\theta \pa_\bx^i f\|_{L^2}^2+\kappa\sum_{|i|=n}\int\pa_\theta \frac{|\pa_\bx^i f |^2}{2} \ L [f]+\kappa\sum_{i'\leq i}\binom{i}{i'}\int   \pa_\theta \pa_\bx^i f \ \pa_\bx^{i'}f\ \pa_\bx^{i-i'} L [f]   \\
\leq &-\frac{\nu}{2}\sum_{|i|=n}\|\pa_\theta \pa_\bx^i f\|_{L^2}^2+\frac{C\kappa^2}{\nu}\sum_{|i|=n}\|\pa_\te L[f]\|_{L^\infty}\|\pa_\bx^{i} f\|_{L^2}^2+\frac{C\kappa^2}{\nu}\|L [f]\|_{W^{n,\infty}}^2\sum_{i'\leq i,\ i'\nq i}\|\pa_\bx^{i'} f\|_{L^2}^2.
\end{align}
Now we apply the estimate that $\|L[ f]\|_{W^{n,\infty}}=\|(\Phi\Psi)*f\|_{W^{n,\infty}}\leq \|\Phi\Psi\|_{W^{n,\infty}}\|f\|_{L^1}$, $\kappa\leq \nu^{5/6}$ and the induction hypothesis to obtain that 
\begin{align*}
\frac{d}{dt}\sum_{|i|=n}\|\pa_\bx^i f(t)\|_{L^2}^2\leq&  C\nu^{2/3}\sum_{|i|=n}\|\pa_\bx^i f(t)\|_{L^2}^2+C\nu^{2/3}\frac{ \max\{1,\|f_0\|_{H_{\bx,\te}^{n-1}}^2\}}{\nu^{1/6}}\exp\lf\{C\nu^{2/3}t\rg\}.
\end{align*}Integrating in time yields that
\begin{align*}
\sum_{|i|=n}\|\pa_\bx^i f(t)\|_{L^2}^2\leq&  C\max\{1,\|f_0\|_{H^n}^2\}\exp\lf\{C\nu^{2/3}  t\rg\}+C\frac{\max\{1,\|f_0\|_{H^{n-1}}^2\}}{\nu^{1/6}}\nu^{2/3}t\exp\lf\{C{\nu}^{2/3} t\rg\}\\
\leq&C\max\{1,\|f_0\|_{H^n}^2\}\nu^{-1/6}\exp\lf\{2C{\nu}^{2/3} t\rg\}.
\end{align*}
This implies $\eqref{Hn_bnd}_{j=0}$ (with a larger constant $C$). 

Next, we implement the induction in $j\in\{1,2,\cdots, n\}$. Assume that the estimate \eqref{Hn_bnd} holds for the $(j-1)$-th level, we apply the energy estimate to derive that 
\begin{align*}
\frac{1}{2}&\sum_{|i|=n-j}\frac{d}{dt}\|\pa_\bx^i\pa_\te^j f\|_{L^2}^2\\
=&-\nu\sum_{|i|=n-j}\|\pa_\theta \pa_\bx^{i}\pa_\te^j f\|_{L^2}^2-\sum_{|i|=n-j}\int\pa_\bx^i\pa_\te^j f \ \pa_\te^j( \bp \cdot \na_\bx \pa_\bx^i f)  +\kappa\sum_{|i|=n-j}\int\pa_\theta \pa_\bx^i\pa_\te^j f \  \pa_\bx^i\pa_\te^j (f L [f])\\
=& -\nu\sum_{|i|=n-j}\| \pa_\bx^i\pa_\theta^{j+1} f\|_{L^2}^2-\sum_{|i|=n-j}\sum_{j'\leq j,\ j'\nq j}\binom{j}{j'}\int \pa_\bx^i\pa_\te^j f\ (\pa_\te^{j-j'}\bp)\cdot \na_\bx(\pa_\bx^{i}\pa_{\te}^{j'} f)\\
&+\kappa\sum_{|i|=n-j}\int\pa_\theta \frac{|\pa_\bx^i\pa_\te^j f |^2}{2} \ L [f]+\kappa\sum_{\substack{i'\leq i,\ j'\leq j\\ (i',j')\nq (i,j)}}\binom{i}{i'}\binom{j}{j'}\int \pa_\bx^i\pa_\theta ^{j+1} f \ \pa_\bx^{i'}\pa_\te^{j'} f\ \pa_\bx^{i-i'}\pa_\te^{j-j'} L [f]   \\
\leq &-\frac{1}{2}\nu\sum_{|i|=n-j}\|\pa_\bx^i\pa_\te^{i+1}f\|_{L^2}^2+C\lf(\sum_{|i|=n-j}\|\pa_\bx^i\pa_\te^j f\|_{L^2}^2\rg)^{1/2}\lf(\sum_{|i'|=n-j+1}\sum_{j'\leq j-1}\|\pa_\bx^{i'}\pa_\te^{j'} f\|_{L^2}^2\rg)^{1/2}\\
&+C\kappa \sum_{|i|=n-j}\|\pa_\bx^i\pa_\te^j f\|_{L^2}^2+C\kappa\sum_{\substack{i'\leq i,\ j'\leq j\\ (i',j')\nq (i,j)}} \|\pa_\bx^{i'}\pa_\te^{j'} f\|_{L^2}^2. 
\end{align*}
Thanks to the induction hypotheses, we have that there exists constant $C_0$ such that
\begin{align*}
\frac{d}{dt}\sum_{|i|=n-j}\|\pa_\bx^i\pa_\te^j f\|_{L^2}^2\leq &C_0\lf(\sum_{|i|=n-j}\|\pa_\bx^i\pa_\te^j f\|_{L^2}^2\rg)^{1/2}\frac{\max\{1,\|f_0\|_{H_{\bx,\te}^n}\}}{\nu^{ 2(j-1)/3+1/12}}\exp\lf\{C_0\nu^{2/3}t\rg\}\\
&+\nu^{5/6}\sum_{|i|=n-j}\|\pa_\bx^i\pa_\te^j f\|_{L^2}^2+\frac{C_0\kappa\max\{1,\|f_0\|_{H_{\bx,\te}^{n-1}}^2\}}{\nu^{4j/3+1/6}}\exp\lf\{C_0\nu^{2/3}t\rg\}.
\end{align*}
Now we consider the quantity:
\begin{align*}
G(t)^2:=\sum_{|i|=n-j}\|\pa_\bx^i\pa_\te^j f\|_{L^2}^2+\frac{\max\{1,\|f_0\|_{H_{\bx,\te}^n}^2\}}{\nu^{4j/3+1/6}}\exp\lf\{{C_0}\nu^{2/3}t\rg\}.
\end{align*} 
The differential inequality above yields that 
\begin{align*}
\frac{d}{dt}G\leq C_0\nu^{2/3}\frac{\max\{1,\|f_0\|_{H_{\bx,\te}^n}\}}{\nu^{ 2j/3+1/12}}\exp\lf\{C_0\nu^{2/3}t\rg\}+C_0\nu^{2/3}G.
\end{align*}
Solving the differential inequality yields that 
\begin{align}
\lf(\sum_{|i|=n-j}\|\pa_\bx^i\pa_\te^j f(t)\|_{L^2}^2\rg)^{1/2}\leq G(t)\leq C \frac{\max\{1,\|f_0\|_{H^n_{\bx,\te}}\}}{\nu^{2 j/3+1/12}}\exp\lf\{2C_0\nu^{2/3}t\rg\}.
\end{align}
This concludes the induction in $j$ and hence completes the induction in $n$.
\ifx  
\myb{Previous:
 Through $H^1$-energy estimate, we have that 
\begin{align}
\frac{1}{2}\frac{d}{dt}\|D_\bx f\|_{L^2}^2
= &-\nu\|\pa_\theta D_\bx f\|_{L^2}^2+\kappa\int\pa_\theta D_\bx f \cdot D_\bx (f L [f])\\
=& -\nu\|\pa_\theta D_\bx f\|_{L^2}^2+\kappa\int\pa_\theta \frac{|D_\bx f |^2}{2} L [f]+\kappa\int  f \pa_\theta D_\bx f \cdot D_\bx L [f]   \\
\leq &-\frac{\nu}{2}\|\pa_\theta D_\bx f\|_{L^2}^2+\frac{\kappa^2}{\nu}\|\Phi\pa_\theta\Psi\|_{L^2}^2\|f\|_{L^2}^2\|D_\bx f\|_{L^2}^2+\frac{\kappa^2}{\nu}\|D_\bx\Phi\Psi\|_{L^\infty}^2\|f\|_{L^2}^2.
\end{align}
Hence we have that 
\begin{align*}
\|D_\bx f(t)\|_{L^2}^2\leq C\|f_0\|_{H^1}^2\exp\lf\{C\frac{\kappa^2}{\nu}\lf(1+\frac{\kappa}{\nu}\rg) t\rg\}+C\int_{0}^t\exp\lf\{C\frac{\kappa^2}{\nu}\lf(1+\frac{\kappa}{\nu}\rg) (t-s)\rg\}\frac{\kappa^2}{\nu}\lf(1+\frac{\kappa}{\nu}\rg)ds.
\end{align*}
Since $\kappa\leq \nu^{5/6}\nu^{\zeta}$, $\zeta=\gamma-5/6>0$, we have that for $\nu$ small, 
\begin{align*}
\|D_\bx f(t)\|_{L^2}^2\leq& C\|f_0\|_{H^1}^2\exp\lf\{ \nu^{1/2+2\zeta} t\rg\}+\nu^{1/2+2\zeta}\int_{0}^t\exp\lf\{\nu^{1/2+2\zeta} (t-s)\rg\} ds\\
\leq&C(\|f_0\|_{H^1}^2+1)\exp\lf\{ \nu^{1/2+2\zeta} t\rg\}.
\end{align*}
Next we estimate the time derivative of $\|\pa_\te f\|_{L^2}^2$:
\begin{align}
\frac{1}{2}\frac{d}{dt}&\|\pa_\te f\|_{L^2}^2=-\nu\|\pa_\theta^2f\|_{L^2}^2+\kappa\int\pa_{\theta}^2  f \pa_\theta (f L [f])-\int \binom{-\sin\theta}{\cos\theta}\cdot D_\bx f \pa_\theta f\\
\leq &-\frac{\nu}{2}\|\pa_\theta D_\bx f\|_{L^2}^2+C\frac{\kappa^2}{\nu} \|\pa_\theta f\|_{L^2}^2+C\frac{\kappa^2}{\nu}\|f\|_{L^2}^2\|\pa_\theta f\|_{L^2}^2+\|D_\bx f\|_{L^2} \|\pa_\theta f\|_{L^2}\\
\leq&\frac{1}{4}\nu^{1/2+2\zeta} \|\pa_\theta f\|_{L^2}^2+\|D_\bx f\|_{L^2} \|\pa_\theta f\|_{L^2}.
\end{align}
Now we consider a function $y(t)=\sqrt{\|\pa_\theta f(t)\|_{L^2}^2+\ep}$, with $ \ep>0$ being a small and arbitrary number, (Following the argument in Alazard  \cite{Alazard21} page 101)
 and observe that the estimate above yields 
\begin{align}
\frac{d}{dt}y(t)^2\leq \frac{1}{2}\nu^{1/2+2\zeta} y(t)^2+2\|D_\bx f(t)\|_{L^2} y(t).
\end{align} Since $y(t)>0$, it is Lipschitz and we have that 
\begin{align}
\frac{d}{dt}y(t) \leq \frac{1}{4}\nu^{1/2+2\zeta} y(t) + \|D_\bx f(t)\|_{L^2} .
\end{align}Now we integrate in time and see that 
\begin{align*}
\|\pa_\te&  f(t)\|_{L^2}\leq y(t)\\
\leq & C(\|f_0\|_{H^1}+\sqrt\ep) \exp\lf\{\nu^{1/2+2\zeta} t\rg\}+C(\|f_0\|_{H^1}+1 )\int_{0}^t\exp\lf\{\frac{1}{4} \nu^{1/2+2\zeta}(t-s)+\frac{1}{2} \nu^{1/2+2\zeta}s\rg\} ds\\
\leq &C(\|f_0\|_{H^1}+1 )\frac{1}{\nu^{1/2+2\zeta}}\exp\lf\{\nu^{1/2+2\zeta} t\rg\}.
\end{align*}}
\fi
This concludes the proof.
\end{proof}

Next, we consider the estimate of the $f_\nq$ and $\lan f\ran$.
\begin{lem}Assume that the estimate \eqref{Hn_bnd} holds on the time interval $[0, \infty)$. Then, if $\nu$ is small enough, there exist constants $N_1=N_1(M,n)>0,\, N_2=N_2(M,n)>0,\, \delta_n>0$ such that  
\begin{align}\label{f_nq_Hn}
\|f_\nq\|_{H^n}\leq& C\nu^{-N_1}\|f_0\|_{H^M}\exp\{-\delta_n\nu^{1/2} t\}, \\ \label{fz_Hn}
\|\lan f\ran\|_{H^n}\leq&  C\nu^{-N_2}\|f_0\|_{H^M},\quad \forall n\leq M-1. 
\end{align}
\end{lem}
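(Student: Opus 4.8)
The plan is to derive both estimates from ingredients already in hand: the $L^2$ enhanced dissipation \eqref{nl_ED} and the $\bx$-average bound \eqref{nl_z_mod}, the mass conservation \eqref{cons_mass}, and the (mildly time-growing) top-order Sobolev bound \eqref{Hn_bnd} assumed in the statement. No new energy functional is needed for $f_\nq$; for $\lan f\ran$ a routine $\dot H^j_\te$ energy estimate on \eqref{eq_average} does the job.

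\textbf{Step 1: \eqref{f_nq_Hn} by interpolation.} Since $\lan f\ran$ depends only on $\te$, one has $\pa_\bx^i f_\nq=\pa_\bx^i f$ for $|i|\ge 1$ and $\pa_\te^j f_\nq=\pa_\te^j f-\lan\pa_\te^j f\ran$, so Jensen's inequality gives $\|f_\nq\|_{H^M}\le 2\|f\|_{H^M}$. Summing \eqref{Hn_bnd} over $|i,j|\le M$ (the heaviest $\nu$-weight is at $j=M$) together with \eqref{L2_bnd} then yields
\begin{align*}
\|f_\nq(t)\|_{H^M}\le C\nu^{-c_M}(1+\|f_0\|_{H^M})\exp\{C\nu^{2/3}t\},\qquad c_M:=\tfrac{2M}{3}+\tfrac{1}{12}.
\end{align*}
Interpolating this against $\|f_\nq(t)\|_{L^2}\le C\|f_{0;\nq}\|_{L^2}e^{-\delta\nu^{1/2}t}$ from \eqref{nl_ED} by the torus Gagliardo--Nirenberg inequality $\|u\|_{H^n}\le C\|u\|_{L^2}^{1-n/M}\|u\|_{H^M}^{n/M}$ gives, for $n\le M-1$,
\begin{align*}
\|f_\nq(t)\|_{H^n}\le C\nu^{-c_M n/M}(1+\|f_0\|_{H^M})\exp\{(C\nu^{2/3}\tfrac nM-\delta\nu^{1/2}(1-\tfrac nM))t\}.
\end{align*}
Because $\tfrac12<\tfrac23$ we have $\nu^{1/2}\gg\nu^{2/3}$ as $\nu\to 0$, so for $\nu$ small (depending on $n,M$ and the universal constants) the exponent is $\le-\delta_n\nu^{1/2}$ with $\delta_n:=\tfrac\delta2(1-\tfrac nM)>0$; this is \eqref{f_nq_Hn} with $N_1=c_M n/M$.

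\textbf{Step 2: \eqref{fz_Hn} by energy estimates.} I would induct on $j$ from $0$ to $n$. The case $j=0$ is \eqref{nl_z_mod} (and $\kappa\le\nu^{5/6}$ gives $\kappa/\nu\le\nu^{-1/6}$). For $j\ge 1$, using $\lan L[f]f\ran=(\Psi*\lan f\ran)\lan f\ran+\lan L[f_\nq]f_\nq\ran$ (from \eqref{rel}, $\lan L[f_\nq]\ran=0$, and $\int\Phi\,d\bx=1$), one computes
\begin{align*}
\tfrac12\tfrac{d}{dt}\|\pa_\te^j\lan f\ran\|_{L^2}^2=-\nu\|\pa_\te^{j+1}\lan f\ran\|_{L^2}^2+\kappa\int\pa_\te^{j+1}\lan f\ran\,\pa_\te^j\big((\Psi*\lan f\ran)\lan f\ran+\lan L[f_\nq]f_\nq\ran\big)\,d\te.
\end{align*}
In the first nonlinear term the top-order piece $\kappa\int\pa_\te^{j+1}\lan f\ran\,(\Psi*\lan f\ran)\pa_\te^j\lan f\ran$ integrates by parts to $-\tfrac\kappa2\int(\Psi'*\lan f\ran)|\pa_\te^j\lan f\ran|^2\le C\kappa\|\pa_\te^j\lan f\ran\|_{L^2}^2$ (with $\|\Psi'*\lan f\ran\|_{L^\infty}\le C\|f_0\|_{L^1}$ by \eqref{cons_mass}); all lower-order terms carry $\le j-1$ derivatives on $\lan f\ran$ and are absorbed into $\tfrac\nu8\|\pa_\te^{j+1}\lan f\ran\|_{L^2}^2+\tfrac{C\kappa^2}\nu\|\lan f\ran\|_{H^{j-1}}^2$, which is bounded in time by the induction hypothesis. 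For the forcing, the smoothing of $L$ via \eqref{ker_est} gives $\|\lan L[f_\nq]f_\nq\ran\|_{H^j}\le C\|f_\nq\|_{L^2}\|f_\nq\|_{H^j}$, which by \eqref{nl_ED} and Step 1 is exponentially small, so its contribution is $\le\tfrac\nu8\|\pa_\te^{j+1}\lan f\ran\|_{L^2}^2+C\nu^{-P}(1+\|f_0\|_{H^M})^4 e^{-c\nu^{1/2}t}$. Since $\pa_\te^j\lan f\ran$ has zero mean for $j\ge 1$, Poincar\'e's inequality replaces $\nu\|\pa_\te^{j+1}\lan f\ran\|_{L^2}^2$ by $\nu\|\pa_\te^j\lan f\ran\|_{L^2}^2$, and since $\kappa\le\nu^{5/6}\ll\nu$ the net coefficient of $\|\pa_\te^j\lan f\ran\|_{L^2}^2$ is $\le-\tfrac\nu4$. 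Gr\"onwall's inequality then yields a uniform-in-time bound $\|\pa_\te^j\lan f\ran(t)\|_{L^2}^2\le C\nu^{-P_j}(1+\|f_0\|_{H^M})^4$, closing the induction; taking $N_2$ the largest exponent for $j\le M-1$ gives \eqref{fz_Hn} (up to the precise power of $\|f_0\|_{H^M}$, which is immaterial for the applications).

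\textbf{Expected main difficulty.} The only non-mechanical point is the exponent arithmetic in Step 1: one must use that the enhanced-dissipation rate $\nu^{1/2}$ strictly dominates the mild growth rate $\nu^{2/3}$ of \eqref{Hn_bnd} for $0<\nu\ll1$, which is exactly where the smallness of $\nu$ enters. Everything else is routine commutator bookkeeping, exploiting that $L$ is a convolution (hence smoothing) operator and that the $\bx$-average equation has the good divergence/Poincar\'e structure.
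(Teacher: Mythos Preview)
Your Step~1 is essentially identical to the paper's argument: interpolate the $L^2$ enhanced dissipation \eqref{nl_ED} against the mildly growing $H^M$ bound \eqref{Hn_bnd} via Gagliardo--Nirenberg, then observe that $\nu^{1/2}\gg\nu^{2/3}$ for small $\nu$ so the decay wins. The paper writes exactly this computation.

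Your Step~2 is correct but takes a different route. The paper does not induct on $j$ with Poincar\'e; instead it says the derivation of \eqref{fz_Hn} is ``similar to \eqref{fz_est}'', replacing the Nash inequality by the Gagliardo--Nirenberg inequality $\|\lan f\ran\|_{\dot H^n}\le C\|\lan f\ran\|_{L^2}^{1/(n+1)}\|\lan f\ran\|_{\dot H^{n+1}}^{n/(n+1)}$. In other words, the paper turns the diffusion term $-\nu\|\lan f\ran\|_{\dot H^{n+1}}^2$ into a super-linear damping $-c\nu\|\lan f\ran\|_{\dot H^n}^{2(n+1)/n}$ (using that $\|\lan f\ran\|_{L^2}$ is already bounded), and then the resulting nonlinear ODE in $\|\lan f\ran\|_{\dot H^n}^2$ stays bounded exactly as in the $L^2$ argument of Step~\#3 of Theorem~\ref{thm_1}\,a). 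Your approach instead exploits that $\pa_\te^j\lan f\ran$ has zero mean for $j\ge1$, so Poincar\'e gives a \emph{linear} damping $-\nu\|\pa_\te^j\lan f\ran\|_{L^2}^2$, after which Gr\"onwall closes directly. This is arguably more elementary here (no nonlinear ODE), while the paper's interpolation argument is closer in spirit to the $L^2$ case and would transfer to settings without a spectral gap. Both yield uniform-in-time bounds with some negative $\nu$-power, which is all that is used downstream; your remark that the precise power of $\|f_0\|_{H^M}$ is immaterial is correct.
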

\begin{proof}The first estimate of $f_\nq$ is a natural consequence of the estimate \eqref{nl_ED} and the interpolation of $H^n$ functions
\begin{align*}
\|f_\nq\|_{\dot H^n}\leq C\|f_\nq\|_{L^2}^{\frac{M-n}{M}}\|f_\nq\|_{H^M}^{\frac{n}{M}}+\|f_\nq\|_{L^2}\leq C\nu^{-N_1}\|f_0\|_{H^M}\exp\lf\{C\nu^{2/3}t-\frac{M-n}{M}\delta\nu^{1/2} t\rg\}.
\end{align*}
Now we pick the $\nu $ small enough to derive \eqref{f_nq_Hn}. Given this bound, the derivation of \eqref{fz_Hn} is similar to \eqref{fz_est}. The main adjustment is to use the Gagliardo-Nirenberg inequality
\begin{align}
\|\lan f\ran\|_{\dot H^n}\leq C\|\lan f\ran\|_{L^2}^{\frac{1}{n+1}}\|\lan f\ran\|_{\dot H^{n+1}}^{\frac{n}{n+1}},\quad n\geq 1.
\end{align}
instead of the Nash inequality \eqref{Nash}. We omit further details for the sake of brevity. 
\end{proof}
\section{Energy Minimizers of the Spatially Homogeneous System}\label{sec:g_s} 
In this section, we present \cite{FrouvelleLiu12}'s idea in our setting to identify the critical points of the free energy functional $\eqref{Free_energy}_{\Psi (\cdot)=\sin(\cdot)}$.  We note that the stationary solution $g_s$ to the  equation \eqref{effctv_dym} satisfies the relation:
\begin{align}\label{D=0}
-\kappa \pa_\theta(g_s (\Psi_0*g_s))=\nu\pa_{\theta}^2g_s\quad\Rightarrow\quad -\kappa  g_s (\sin(\cdot)*g_s) =\nu\pa_{\theta}g_s+C.
\end{align} 
Now we introduce the quantity
\begin{align}
r:=\int_{-\pi}^\pi e^{-i\te}g_s(\te) d\te. 
\end{align}

We recall the definition of the Fourier transform and obtain that 
\begin{align*}
\lf(\sin*g_s\rg)(\te)=&\int_{-\pi}^\pi \frac{e^{i(\te-z)}-e^{-i(\te-z)}}{2i} g(z)dz=\frac{2\pi}{2i}(e^{i\theta}\wh g(1)-e^{-i\theta}\wh g(-1))\\
=&\frac{2\pi}{2i}(e^{i\theta}r-e^{-i\theta}\overline{r})=2\pi(\cos\theta \Im r+\sin\theta \Re r).
\end{align*} 
Now we plug this relation into \eqref{D=0}, to obtain the relation
\begin{align}
\pa_\theta  g_s
=-\frac{{2\pi}\kappa}{\nu} (\cos\theta \Im r+\sin\theta \Re{r})g_s+C.
\end{align} 
\myb{(Check?)} We can apply the Fourier transform and focus on the zero mode to see that $C=0.$  
Now we can use the integration factor to find the solution
\begin{align}\label{g_s}
g_s^{(r)}(\theta)
=\frac{1}{Z}\exp\lf\{\frac{ 2\pi\kappa}{\nu}(\cos\theta\Re r-\sin\theta\Im r)\rg\}=\frac{1}{Z}\exp\lf\{\frac{2\pi\kappa}{\nu}|r|\cos(\theta+\text{arg}(r))\rg\}.
\end{align}
Here $Z$ is the normalization factor to guarantee that $\|g_s\|_1=1$, i.e.,
\begin{align}
Z:=\int_{-\pi}^\pi \exp\lf\{\frac{2\pi\kappa}{\nu}|r|\cos(\theta)\rg \} d\theta=2\pi I_0\lf(\frac{2\pi\kappa}{\nu}|r|\rg).
\end{align}
Here $I_0$ is the modified Bessel function of the first kind.  
The constant state corresponds to $r=0.$  

To rigorously justify that \eqref{g_s} is indeed the solution to the stationary equation \eqref{D=0}, one needs to check that the resulting solution $g_s(\theta)$  indeed has Fourier coefficient $\wh g_s(1)=r.$ Now, we apply the Fourier transform
\begin{align}
\wh{g_s}(1)=&\frac{1}{2\pi I_0(2\pi \kappa| r|/\nu)} \int_{-\pi}^{\pi}\exp \lf\{\frac{2\pi\kappa |r|}{\nu}\cos(\theta+\arg r)\rg\}e^{-i(\theta+\arg r)} d\theta e^{i\arg r}\\
=&\frac{1}{2\pi I_0(2\pi \kappa| r|/\nu)} \int_{-\pi}^{\pi}\exp \lf\{\frac{2\pi\kappa |r|}{\nu}\cos(\theta)\rg\}\cos(\theta) d\theta e^{i\arg r}\\
=&\frac{I_1\lf(\frac{2\pi \kappa |r|}{\nu}\rg)}{2\pi I_0\lf(\frac{2\pi \kappa |r|}{\nu}\rg)}e^{i\arg r}=:F\lf(\frac{2\pi \kappa| r|}{\nu}\rg)e^{i\arg r}.
\end{align} 
Hence we have that the $g_s$ derived is indeed a solution if 
\begin{align}\label{comp_cond}
{2\pi}F\lf(\frac{2\pi \kappa |r|}{\nu}\rg)= {2\pi} {\frac{\nu}{2\pi \kappa  }\frac{2\pi \kappa }{\nu}}|r|.
\end{align}
Now we refer the readers to the paper \cite{FrouvelleLiu12} Proposition 3.3 (set $n=2$ there) to see that this compatibility condition has only trivial solution $|r|=0$ for $\kappa/\nu\leq 2$ and two distinct solutions ($|r_1|=0$, and $|r_2|=|r_2|(\kappa/\nu)$) for $\kappa/\nu>2$.
\myb{Details: Now we observe the following relation for the quotient $I_1(z)/I_0(z),\, z\geq 0.$ 
\begin{align*}
\frac{d}{dz}\frac{I_1(z)}{I_0(z)}=1-\frac{I_1(z)}{zI_0(z)}-\lf(\frac{I_1(z)}{I_0(z)}\rg)^2,\quad \frac{I_1(0)}{I_0(0)}=0.
\end{align*}
Hence we can see that the natural parameter range for the nontrivial stationary solution to exist is that $\frac{2\pi \kappa}{\nu}F'(0)>1$, which is $\kappa/\nu>2$. This is exactly the condition in the paper \cite{FrouvelleLiu12} and is exactly outside our linear stability range \eqref{lin_r}.}

\section{Derivation of the Kinetic Model}\label{App:Derivation}
In this section, we sketch the justification of the mean-field limit from \eqref{micro_eq} to the mesoscopic model \eqref{eq:bsc_1}. We use the main strategy in the paper \cite{BolleyCanizoCarrillo12}. First of all, we rewrite the system \eqref{micro_eq} in the form discussed in \cite{BolleyCanizoCarrillo12}. We recall that $\Psi(\theta)=\psi(\theta)\sin(\theta)$ for a smooth even function $\psi(\theta)$ on $[-\pi,\pi]=\mathbb{T}$ and define the velocity vectors $\bv^i=|\bv^i|(\cos(\theta^i),\sin(\theta^i))\in\rr^2,\, i\in\{1,2,\cdots, N\}$. Hence, we can define the function 
\begin{align}
\wt \psi(\bv^i,\bv^j)=\psi(\theta^i-\theta^j),\quad |\bv^i|,|\bv^j|\neq 0.
\end{align} 
Next, we recall the projection operator to the tangent space of $\mathbb{S}$:
\begin{align}
P(\bv)=I-\frac{\bv\otimes \bv}{|\bv|^2}.
\end{align}
 Then we can explicitly rewrite the equation \eqref{micro_eq} in terms of $(\bx^i, \bv^i)$: 
\begin{align}\label{micro_eq_form_2}
d\bx^i=&v(t)\bv^idt,\quad d\bv^i=\sqrt{2\nu} P(\bv^i)\circ dW^i-\kappa P(\bv^i)\lf(\frac{1}{N}\sum_{j=1}^N \Phi(\bx^i-\bx^j)(\bv^i-\bv^j)\wt\psi(\bv^i,\bv^j)\rg)dt,\\
&\bx^i(t=0)=\bx_0^i,\quad\bv^i(t=0)=\bv_0^i,\quad |\bv^i_0|=1,\quad i\in\{1,2,\cdots,N\}.
\end{align}
Here $\{dW^i\}_{i=1}^N$ are i.i.d. Brownian motions in $\rr^2$. Thanks to the discussion in \cite{BolleyCanizoCarrillo12}, the $|\bv^i|=1$ property is preserved overtime. To check the equivalence between the \eqref{micro_eq_form_2} and \eqref{micro_eq},  we recall from \cite{BolleyCanizoCarrillo12} that the Stratonovich noise $\sqrt{2\nu} P(\bv^i)\circ dW^i$ is equivalent to the diffusion process on $\mathbb{S}$, i.e., $\sqrt{2\nu}dB^i$ in \eqref{micro_eq}. Moreover, we observe that 
\begin{align*}
&-\kappa P(\bv^i)\lf(\frac{1}{N}\sum_{j=1}^N \Phi(\bx^i-\bx^j)(\bv^i-\bv^j)\wt\psi(\bv^i,\bv^j)\rg)\\
&=-\kappa\sum_{j=1}^N \Phi(\bx^i-\bx^j)\psi(\theta^i-\theta^j) \begin{bmatrix}
  \sin^2\theta^i & -\cos\theta^i\sin\theta^i \\
  -\cos\theta^i\sin\theta^i & \cos^2\theta^i
\end{bmatrix}\binom{\cos\theta^i-\cos\theta^j}{\sin\theta^i-\sin\theta^j}\\
&=\kappa\sum_{j=1}^N \Phi(\bx^i-\bx^j) \binom{-\sin\theta^i}{\cos\theta^i} \sin(\theta^j-\theta^i)\psi(\theta^j-\theta^i)\\
&=\kappa\sum_{j=1}^N \Phi(\bx^i-\bx^j) \Psi(\theta^j-\theta^i) \binom{-\sin\theta^i}{\cos\theta^i}. 
\end{align*}
Here we have used the fact that $\psi$ is even. Hence this term coincides with the corresponding alignment term in \eqref{micro_eq}. The above argument yields the equivalence between \eqref{micro_eq} and \eqref{micro_eq_form_2}. After developing the equivalence relation, one can follow the argument in \cite{BolleyCanizoCarrillo12} to take the mean-field limit ($N\rightarrow\infty$) in \eqref{micro_eq_form_2}. The resulting kinetic equation is equivalent to \eqref{eq:bsc_1}. We omit further details for the sake of brevity.

 \bibliographystyle{siam}
\bibliography{References_chronical_2022}

\end{document}